\def\a{a}
\def\t{\tau}
\newcommand{\ydos}{\mathbf{y_{2}}}
\newcommand{\M}{{\mathbb M}}
\newcommand{\EE}{{\mathbb E}}
\newcommand{\ds}{\displaystyle}
\newcommand{\supp}{\mathrm{supp}\;}
\newtheorem{remark}{\textbf{Remark}}[section]
\newtheorem{lemma}{\textbf{Lemma}}[section]
\newtheorem{theorem}{\textbf{Theorem}}[section]
\newtheorem{proposition}{\textbf{Proposition}}[section]
\newtheorem{problem}{\textbf{Problem}}[section]
\numberwithin{equation}{section}
\title[Approximation of deterministic mean field games with control-affine dynamics]{Approximation of deterministic mean field games with control-affine dynamics} 
\author{Justina Gianatti \and   Francisco J. Silva}
\thanks{CIFASIS-CONICET-UNR, Ocampo y Esmeralda, S2000EZP, Rosario, Argentina (gianatti@cifasis-conicet.gov.ar). }
\thanks{Institut de recherche XLIM-DMI, UMR-CNRS 7252, Facult\'e des Sciences et Techniques, 
Universit\'e de Limoges, 87060 Limoges, France (francisco.silva@unilim.fr)}
\def\dd{{\rm d}}
\newcommand{\ov}[1]{\overline{#1}}
\def\weight(#1,#2){c_{#1,#2}}
\def\Lb{\bar{L}}
\def\A{\mathcal{A}}
\def\B{\mathcal{B}}
\def\G{\mathcal{G}}
\def\I{\mathcal{I}}
\def\M{\mathcal{M}}
\def\P{\mathcal{P}}
\def\SS{\mathcal{S}}
\def\eps{\varepsilon}
\def\argmin{\mathop{\rm argmin}}
\def\supp{\mathop{\rm supp}}
\def\1B{{\bf  1}}
\newcommand{\NN}{\mathbb{N}}
\newcommand{\ZZ}{\mathbb{Z}}
\newcommand{\QQ}{\mathbb{Q}}
\newcommand{\RR}{\mathbb{R}}
\def\EE{\mathbb{E}}
\def\cR{\mathbb{R}}
\newcommand\be{\begin{equation}}
\newcommand\ee{\end{equation}}
\newcommand\ba{\begin{array}}
\newcommand\ea{\end{array}}
\newcommand{\bean}{\begin{eqnarray*}}
\newcommand{\eean}{\end{eqnarray*}}
\def\ds{\displaystyle}
\begin{document}
\begin{abstract} We consider deterministic mean field games  where the dynamics of a typical agent is non-linear with respect to the state variable and affine with respect to the control variable.  Particular instances of the problem considered here are mean field games with control on the acceleration (see \cite{MR4102464, MR4132067,MR4177552}).  We focus our attention on the approximation of such mean field games by analogous problems in discrete time and finite state space which fall in the framework of \cite{MR2601334}. For these approximations, we show the existence and, under an additional monotonicity assumption, uniqueness of solutions. In our main result, we establish the convergence of equilibria of the discrete mean field games problems towards equilibria of the continuous one. Finally, we provide some numerical results for two MFG problems. In the first one, the dynamics of a typical player is nonlinear with respect to the state  and, in the second one, a typical player controls its acceleration. 
\end{abstract}

\maketitle

{\small
\noindent {\bf AMS-Subject Classification:} 91A16, 	49N80, 35Q89, 65M99, 91A26.  \\[0.5ex]
\noindent {\bf Keywords:} Deterministic mean field games, control-affine dynamics, Lagrangian equilibrium, approximation of equilibria, convergence results, numerical experiences.
}
\section{Introduction}
Mean Field Games (MFGs) systems have been introduced independently by \cite{HMC06} and \cite{MR2269875,MR2271747,MR2295621} in order to describe the asymptotic behaviour of Nash equilibria of non-cooperative symmetric dynamic differential games with a large number of indistinguishable  players, which, individually, have a negligible influence on the game.  We refer the reader to \cite{MR2762362,Cardialaguet10,MR3134900,MR3195844,MR3559742,MR3752669,MR3753660,MR4214773}, and the references therein, for a general overview on MFGs theory including analytic and probabilistic aspects as well as their numerical approximation and applications in crowd motion, economics, and finance.  

In the case of time-dependent MFGs, the articles  \cite{MR2271747,MR2295621} model equilibria of the game with a continuum of agents  by a system of two Partial Differential Equations (PDEs) having the form
\be
\label{mfg_Edp_formulation}
\ba{rcl}
-\partial_t v - \frac{\sigma^{2}}{2}\Delta v+ H(t,x,  \nabla_{x} v) &=& f(x,m(t))  \quad \; \text{for }  (t,x)\in  [0,T] \times \RR^{d}, \\[4pt]
v(T,x)&=& g(x,m(T))  \quad \text{for }  x\in \RR^d, \\[4pt]
\partial_t m - \frac{\sigma^{2}}{2}\Delta m -\text{div}\left(\partial_{p} H(t,x,\nabla_x v)m\right) &=& 0\quad \text{for }  (t,x) \in [0,T] \times \RR^{d},\\[4pt]
m(0) &=& m_0 \quad \text{in }  \RR^{d},
\ea
\ee
where $T>0$, $\sigma \in \RR$, $H\colon[0,T]\times \RR^d \times \RR^d \to \RR$ is given by
\be
\label{definicion_H_introduccion}
H(t,x,p)= \sup_{a\in \RR^r}\left\{- \langle p, b(t,a,x) \rangle - \ell_0(t,a,x)\right\} \quad \text{for all } (t,x,p)\in [0,T]\times \RR^d \times \RR^d,
\ee
with $b\colon[0,T]\times \RR^r \times \RR^d \to \RR^d$, $\ell_0\colon[0,T]\times \RR^r \times \RR^d \to \RR$ ($r\leq d$), and, denoting by $\P(\RR^d)$ the set of probability measures over $\RR^d$, $m_0 \in \P(\RR^d)$, and $f$, $g\colon \RR^d\times \P(\RR^d) \to \RR$.  The first equation in \eqref{mfg_Edp_formulation}, together with the final condition, is a Hamilton-Jacobi-Bellman (HJB) equation which describes the value  function  $v\colon[0,T]\times \RR^d \to \RR$  associated to an optimal control problem solved by a {\it typical player}. Notice that, because of the presence of  $f$ and $g$, the cost function of this problem depends on the distribution $m(t)\in \P(\RR^d)$ of the entire population of agents at each time $t\in[0,T]$. The third equation, together with the initial distribution,  is a Fokker-Planck (FP) equation which describes the fact that $m\colon[0,T]\to \P(\RR^d)$ evolves following  the optimal dynamics of the typical player. 

In this work we focus our attention on the first order case $\sigma=0$, which means that the underlying optimal control problem is  deterministic. When the players directly control their velocity, i.e.  
\be
\label{b_control_velocity}
b(t,a,x)=a \quad \text{for all }(t,a,x)\in[0,T]\times \RR^r\times \RR^d, 
\ee
the existence of solutions to \eqref{mfg_Edp_formulation} has been shown in \cite{MR2295621} under suitable assumptions on the data (see also \cite{Cardialaguet10}). In the case where the dynamics is affine and the agents control the acceleration, i.e. a particular instance of $b(t,a,x)=Ax+ Ba$ for all $(t,a,x)\in[0,T]\times \RR^r\times \RR^d$, for some matrices $A\in \RR^{d\times d}$ and $B\in \RR^{d\times r}$,  the existence of solutions to \eqref{mfg_Edp_formulation} has been shown in \cite{MR4102464,MR4132067} under suitable assumptions on $H$, $f$, and $g$. In the aforementioned references, uniqueness of the solution to \eqref{mfg_Edp_formulation} essentially holds under the so-called Lasry-Lions monotonicity condition on $f$ and $g$ (see \cite[Section 2.3]{MR2295621}).  A relaxed notion of equilibrium in the deterministic case, called {\it Lagrangian equilibrium}, has been recently studied in \cite{CH17,MR3644590,cardaliaguet_meszaros_santambrogio_2018,cannarsa_capuani_2018}. In this context, an equilibrium $\xi^{*}$ is a measure on the path space $C([0,T];\RR^d)$ of $\RR^d$-valued continuous function on $[0,T]$ whose support is contained in the set of trajectories that solve an optimal control problem whose cost function depends on the time marginals of $\xi^*$ (see Section~\ref{sec:definition_lagrangian_equilibria} below). The existence of a Lagrangian equilibrium $\xi^*$ holds under rather general assumptions on the data and, under stronger conditions, a solution $(v,m)$  to \eqref{mfg_Edp_formulation} can be built in terms of $\xi^*$ (see e.g. \cite{CH17,MR3986796,MR4056836,MR4132067,MR4256274}).

The numerical approximation of the MFG system with non-local couplings \eqref{mfg_Edp_formulation} has been an active research subject over the last decade. In the second order case $\sigma\neq 0$, convergent finite difference and Semi-Lagrangian (SL) schemes have been proposed in \cite{AchdouCapuzzo10,MR3097034} and \cite{CarliniSilva18}, respectively. Let us also mention the recent article~\cite{Bertucci_Cecchin_22}, where the convergence of solutions to MFGs with finite state space towards solutions to MFGs with continuous state space, including common noise, is established from the convergence of the solutions to the corresponding master equations.  We refer the reader to \cite{MR4214777}, and the references therein, for a recent account on numerical methods for second order MFGs with local and non-local coupling terms $f$ and $g$. In the first order case $\sigma=0$, with $b$ given by \eqref{b_control_velocity},  the article \cite{MR2928379} studies a time discretization  of \eqref{mfg_Edp_formulation} whose solutions are shown to converge towards a MFG equilibrium as the time step tends to zero. In the same framework, the paper \cite{MR3148086} introduces a fully discrete SL scheme, i.e. the state variable is also discretized, to approximate solutions to \eqref{mfg_Edp_formulation}. In this context, convergence of solutions of the fully discrete scheme towards a MFG equilibrium is established when the state dimension $d$ is equal to one.  Let us also mention the recent contribution \cite{Chowdhury_et_al} which considers a generalization of  \eqref{mfg_Edp_formulation} to the case where the HJB and Fokker-Planck equations involve non-local and fractional diffusion terms. In that reference, an extension of the SL scheme in \cite{MR3148086} is studied and, for degenerate non-local diffusion operators, convergence is also shown in the one-dimensional case. A different strategy has been proposed in \cite{MR4030259}, where the authors provide a full discretization of  deterministic MFGs problems, with $b$ being given by \eqref{b_control_velocity}, which falls into the category of discrete time, finite state MFGs, introduced in \cite{MR2601334}. In order to solve the discrete MFG problem, the authors justify the application of the so-called {\it fictitious play method} from game theory (see e.g. \cite[Chapter 2]{MR1629477}) in the context of discrete MFGs. Moreover, as the discretization parameters tend to zero,  the convergence of equilibria of the discrete MFGs towards an equilibrium of the original MFG in Lagrangian form is established in arbitrary state dimensions. If the Lagrangian equilibrium can be described by the PDE system \eqref{mfg_Edp_formulation}, which is the case of the MFG problem considered in \cite{MR3148086},  then the discretization in \cite{MR4030259} allows to approximate solutions to \eqref{mfg_Edp_formulation} in arbitrary dimensions (see  \cite[Corollary 4.1]{MR4030259}).  Finally, we also refer the reader to the recent papers \cite{MR3962816,MR4322102}, where the authors approximate solutions to  \eqref{mfg_Edp_formulation} by using Fourier and variational techniques. 

In this work, we consider the approximation of deterministic MFGs where the dynamics of a typical agent is non-linear with respect to the state variable and affine with respect to the control variable, i.e. where $b$ takes the form 
\be
\label{control_affine_dynamics}
b(t,a,x)=A(t,x)+B(t,x)a\quad \text{for all }(t,a,x)\in[0,T]\times \RR^r\times \RR^d, 
\ee
for some given functions $A\colon [0,T]\times \RR^d \to \RR^d$ and $B\colon [0,T]\times \RR^d \to \RR^{d\times r}$. In particular, the dynamics that we consider covers the case of MFGs with control on the acceleration studied, at the continuous level, in \cite{MR4102464,MR4132067}.   We take the viewpoint of \cite{MR4030259}, and the resulting discretization is an instance of the discrete MFG problem introduced in \cite{MR2601334}. The key point is to observe that, under our assumptions, if the support of the initial distribution $m_0$ of the agents is compact, then the set of optimal trajectories, and hence the support of any Lagrangian equilibrium, will be contained on a fixed compact set during the whole time interval $[0,T]$. Based on this a priori information on  optimal trajectories, we construct a suitable discretization of the optimal control problem solved by a typical player by considering a finite time-space grid contained on a compact subset of $[0,T]\times \RR^d$, the latter being independent of the discretization steps. Moreover, the proposed approximation takes advantage of the particular form of the dynamics in \eqref{control_affine_dynamics} and allows, at the fully-discrete level, to write the set of admissible feedback controls in terms of the state variables. The resulting scheme to approximate optimal control problems with control-affine dynamics seems to be new and, compared with standard SL schemes (see e.g. \cite{MR3341715}), it reduces the complexity of computing interpolants over grids contained in $\RR^d$ to the computation of interpolants over grids contained in $\RR^{d-r}$. In order to obtain a discretization of the MFG problem, the previous scheme is coupled with a discretization of the FP equation which has a probabilistic interpretation in terms of a  finite state Markov chain representing the evolution of a typical agent in the discrete framework. As usual in game theory, the computation of a discrete equilibrium, i.e. the solution to the fully-discrete scheme, can be written as a fixed point problem in terms of a {\it best response mapping}. Existence of at least one equilibrium is shown in Proposition~\ref{uniqueness_mfg_discreto}, while uniqueness is established in Proposition~\ref{existencia_caso_finito} under the standard Lasry-Lions monotonicity conditions on the couplings $f$ and $g$. As in \cite{MR4030259}, the (unique) equilibrium of the discrete MFG can be computed by using the fictitious play method provided that the aforementioned monotonicity conditions on $f$ and $g$ hold. Given a sequence of equilibria of the discrete MFG problems, corresponding to a sequence of discretization parameters tending to zero, we associate a sequence of probability measures on $C([0,T];\RR^d)$ which is pre-compact for the topology of weak convergence. Our main result in Theorem~\ref{main_result}, valid in arbitrary dimensions, states that every accumulation point of this sequence is a MFG equilibrium in Lagrangian form. In particular, if system \eqref{mfg_Edp_formulation} is well-posed,  then its unique solution is the limit of solutions to our scheme.  Let us emphasize that in the particular case where $b$ 
is given by \eqref{b_control_velocity}, our scheme allows to approximate MFGs with cost functional that are much more general than those in \cite{MR4030259} (see Remark~\ref{comentarios_despues_del_teorema}{\rm(ii)} below). We illustrate our theoretical findings by considering two examples. In the first one, the problem has only one state variable, $b$ is non-linear with respect to the state and affine with respect to the control. In the second example, we consider a problem with two states, position and velocity, and only one control, given by the acceleration. In both examples, the discrete MFGs problems are solved by using the fictitious play method. 

The rest of the paper is organized as follows. The next section introduces some basic definitions, states the MFG problem, fix our main assumptions, and provides existence and uniqueness results of a Lagrangian equilibrium. In Section~\ref{approximation_value_function_general}, we describe the approximation scheme that we propose for the optimal control problem underlying the MFG problem. We first derive some standard but important properties of a semi-discrete scheme and then we present the fully-discrete scheme, which is the cornerstone of the discretization of the MFG problem. Section~\ref{sec:main_result} introduces the discretization of the MFG problem, provides  existence and uniqueness results of an equilibrium for the discrete MFG, and discuss its numerical solution by using the fictitious play method. In Section~\ref{convergence_result} we state and prove our main result, which shows that accumulation points of equilibria of discrete MFGs, as the discretization parameters tend to zero, are equilibria in Lagrangian form of the continuous MFG problem. Two numerical tests that support our theoretical findings are presented in Section~\ref{sec:numerical_result}. Finally, we provide an appendix with the proofs of some technical results from Section~\ref{approximation_value_function_general} and  the proof of the uniqueness result stated in Section~\ref{sec:definition_lagrangian_equilibria}.
\section{Preliminaries}   \label{sec:preliminaries}
We begin by introducing some standard notations. Let $(X,d)$ be a separable metric space, denote by $\B(X)$ the family of Borel subsets of $X$ and by $\P(X)$ the family of Borel probability measures on $X$. Let us define
$$
\P_1(X)=\left\{\mu\in\P(X)\,\Big|\, \int_X d(x, x_0)\dd \mu < \infty\;\;\text{for some }x_0\in X \right\},
$$
which is endowed with the Monge-Kantorovic metric
\be
\label{W1_distance}
d_1(\mu_1,\mu_2)=\inf_{\mu\in\Pi(\mu_1,\mu_2)}\int_{X\times X}d(x,y) \dd \mu(x,y)\quad \text{for all $\mu_1, \,\mu_2\in\P_1(X)$,}
\ee
where $\Pi(\mu_1,\mu_2)$ is the set of Borel probabilities measures on $X\times X$ with first and second marginals equal to $\mu_1$ and $\mu_2$, respectively. A sequence $(\mu_n)_{n\in\NN}\subset \P(X)$ narrowly converges to $\mu\in\P(X)$ if
$$
\lim_{n\to\infty}\int_{X} \varphi(x)\dd\mu_n(x)=\int_{X} \varphi(x)\dd\mu(x)\quad \text{for all $\varphi:X\to \cR$ bounded and continuous}.
$$
By \cite[Proposition 7.1.5]{ambrosio2008gradient}, for any compact set $K\subset X$, we have $\P(K)=\P_1(K)$,  $\P(K)$ is compact, and for any sequence $(\mu_n)_{n\in\NN}\subset \P(K)$ and $\mu\in \P(K)$, $\lim_{n\to\infty}d_1(\mu_n, \mu)=0$ if and only if $(\mu_n)_{n\in\NN}$ narrowly converges to $\mu$.

Finally, given two separable metric spaces $X$ and $Y$, we denote by $C(X;Y)$ the space of continuous functions from $X$ to $Y$ and, for a measure $\mu\in\P(X)$ and a Borel map $\varphi\colon X\to Y$, we denote by $\varphi\sharp\mu\in\P(Y)$ the {\it push-forward }of $\mu$ through $\varphi$, which is defined by
$$
\varphi\sharp\mu(\mathcal{O}) = \mu\left( \varphi^{-1}(\mathcal{O})\right) \quad \text{for all } \mathcal{O}\in \B(Y).
$$
\subsection{Assumptions and Lagrangian MFG equilibria}  
\label{sec:definition_lagrangian_equilibria}
In this section, we fix the assumptions on the data and we introduce the notion of Lagrangian MFG equilibrium. We also state existence and uniqueness results of such equilibrium. 

In what follows, for any $x\in\cR^n$ we will denote by $|x|=\max\{|x_i|\,|\,i=1,\dots, n \}$ its maximum norm and, for any $R>0$, we will set $\ov{\mathrm{B}}(x,R)$ for the corresponding closed ball of center $x$ and radius $R$. We will also denote by $|\cdot|$ the induced norm in the space of matrices $\cR^{n_1\times n_2}$ ($n_1$, $n_2\in\NN$).

Let  $A\colon [0,T] \times \RR^d  \to \RR^d$, $ B\colon [0,T] \times \RR^d  \to \RR^{d\times r}$, with $r\leq d$,  $\ell\colon [0,T] \times \RR^r \times \RR^d\times \mathcal{P}_1(\cR^d) \to \RR$,  $g\colon \RR^d\times \mathcal{P}_1(\cR^d) \to \RR$, and  $p\in (1,\infty)$. We consider the following assumptions. 
\medskip
\begin{itemize}
\item[{\bf(H1)}] The function $\ell\colon [0,T]\times\cR^r\times\cR^d\times \mathcal{P}_1(\cR^d) \to \cR$ is continuous and
\smallskip
\begin{enumerate}[(i)] 
\item  there exist constants $\underline{\ell}>0$, $\overline{\ell}>0$, and $C_\ell>0$ such that
$$
\underline{\ell}|\a|^{p}-C_\ell\leq \ell(t,\a,x,\mu)\leq \overline{\ell}|\a|^{p}+C_\ell \quad \text{for all }  (t,\a,x,\mu)\in [0,T]\times\cR^r\times  \cR^d\times \mathcal{P}_1(\cR^d).
$$
\item There exists $L_\ell>0$ such that
$$
|\ell(t,\a,x,\mu)-\ell(t,\a,y, \mu)|\leq L_\ell \left(1+|\a|^p\right)|x-y|\quad \text{for all }(t,\a,\mu)\in[0,T]\times\cR^r\times \mathcal{P}_1(\cR^d), \, x,y\in\cR^d. 
$$
\item The function $\ell$ is convex and continuously differentiable with respect to its second argument. 
\end{enumerate}
\vspace{0.25cm}
\item[{\bf(H2)}] The function $g\colon \cR^d\times\mathcal{P}_1(\cR^d)\to \cR$ is continuous and there exist $c_g\in \RR$ and $L_g>0$ such that
$$
 g(x,\mu) \geq c_g  \quad \mbox{and} \quad |g(x, \mu)-g(y, \mu)|\leq L_g|x-y| \quad \text{for all } x,y\in\cR^d, \, \mu\in\mathcal{P}_1(\cR^d). 
$$ 
\item[{\bf(H3)}] The functions $A\colon [0,T]\times\cR^d\to\cR^d$ and $B\colon [0,T]\times\cR^d\to\cR^{d\times r}$ are continuous and 
\smallskip
\begin{enumerate}[(i)]
\item there exist $L_A$, $L_ B>0$ such that
$$
|A(t,x)-A(t,y)|\leq L_A |x-y|,\quad | B(t,x)- B(t,y)|\leq L_ B |x-y| \quad \text{for all } x, y\in\cR^d, \, t\in [0,T].
$$
\item There exists $C_ B>0$ such that
$$
\left|B(t,x)\right|\leq C_ B \quad \text{for all } (t,x)\in [0,T]\times \cR^d.
$$
\item There exists $\{i_1, \hdots, i_{r}\}\subset \{1,\hdots, d\}$ such that, for all $(t,x)\in [0,T]\times \RR^d$, the rows $i_1, \hdots, i_r$ of $B(t,x)$ are linearly independent. 
\end{enumerate}
\end{itemize}
\begin{remark}
\label{despues_de_hipotesis}
{\rm(i)} Notice that, setting $C_{A}= \max\{\max_{t\in [0,T]}|A(t, 0 )|,L_{A}\}$, {\bf(H3)}{\rm(i)} implies that 
\be
\label{erwrwqrqxa}
|A(t,x)|\leq C_{A}(1+|x|)  \quad \text{for all } (t,x) \in [0,T] \times \RR^d. 
\ee
{\rm(ii)} Assumption~{\bf(H3)}{\rm(iii)}, which is satisfied for instance if $B$ is constant and has full column rank, implies that, for all $(t,x) \in [0,T] \times \RR^d$, $B(t,x)$ can be decomposed into two submatrices, the first one  is formed by the rows $i_1, \hdots, i_r$ of $B(t,x)$ and is invertible, and the other one is formed by the remaining rows. As we will see in Sections \ref{fully_discrete_hjb} and \ref{sec:main_result}, this decomposition will play an important role in the construction of the approximation of the mean field game problem that we deal in this work.   
\end{remark}

For $\mu\in\P(\cR^d)$, we denote by $\supp(\mu)$ its support. We will assume the following condition on the initial distribution $m_0$.
\smallskip
\begin{itemize}
\item[{\bf(H4)}] The set $\supp(m_0)$ is compact. 
\end{itemize}\vspace{0.15cm}

Let  $x\in\cR^d$ and let $m\in C\left([0,T];\mathcal{P}_1(\cR^d)\right)$. In the MFG problem that we will consider, a typical player solves an optimal control problem of the form 
\be
\label{oc_problem}
\left\{\ba{l}
\ds \inf  \; \int_{0}^{T} \ell\left(s, \alpha(s),\gamma(s), m(s) \right)\dd s + g(\gamma(T),m(T)) \\[10pt]
\mbox{s.t. } \hspace{0.3cm} \dot{\gamma}(s)= A(s,\gamma(s))+ B(s,\gamma(s))\alpha(s) \quad \text{for a.e. } s \in (0,T),\\[6pt]
\hspace{0.9cm} \gamma(0)=x, \\[6pt]
\hspace{0.9cm} \gamma\in W^{1,p}([0,T];\cR^d), \; \alpha\in L^{p}([0,T];\cR^r).
\ea\right. \tag{$OC_{x,m}$}
\ee
 
Let us endow $\Gamma:=C\left([0,T];\cR^d\right)$ with the supremum norm $\|\cdot \|_{\infty}$ and, for all $t\in[0,T]$, define  $e_t\colon \Gamma\to\cR^d$ by $e_t(\gamma)=\gamma(t)$ for all $\gamma\in\Gamma$. Let us also set
$$
\P_{m_0}(\Gamma)=\{\xi\in\mathcal{P}_1\left(\Gamma\right)\,|\,e_0\sharp\xi = m_0\}.$$
Inspired by \cite{CH17, cannarsa_capuani_2018, cardaliaguet_meszaros_santambrogio_2018}, we consider the following problem. 
\begin{problem}
\label{mfg_problem} 
Find $\xi^*\in \mathcal{P}_{m_0}\left(\Gamma\right)$ such that $[0,T] \ni t\mapsto e_t\sharp \xi^* \in \P_1(\RR^d)$ belongs to $C\left([0,T];\mathcal{P}_1(\cR^d)\right)$ and  for $\xi^*$-a.e. $\gamma^*\in\Gamma$ there exists $\alpha^*\in L^p([0,T],\cR^r)$ such that $(\gamma^*,\alpha^*)$ solves \eqref{oc_problem} with $x=\gamma^*(0)$ and $m(t)=e_t\sharp \xi^*$ for all $t\in[0,T]$.
\end{problem}

Any $\xi^*\in  \mathcal{P}_{m_0}\left(\Gamma\right)$  solving Problem \ref{mfg_problem} is called a {\it Lagrangian {\rm MFG} equilibrium}. As mentioned in the introduction, our main focus is to approximate Lagrangian MFG equilibria (provided that they exist). Actually, the existence of solutions to Problem~\ref{mfg_problem} can be obtained as  a consequence of our approximation result in Section~\ref{convergence_result}.
\begin{theorem} Assume that {\bf(H1)-(H4)} hold. Then Problem~\ref{mfg_problem} has at least one solution. 
\end{theorem}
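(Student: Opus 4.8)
The plan is to realize a Lagrangian equilibrium as a fixed point of a best-response map acting on measures over the path space $\Gamma$, via the Kakutani--Fan--Glicksberg theorem; the paper signals that the same conclusion also drops out for free from the approximation result of Section~\ref{convergence_result}, and I expect the authors to simply invoke Theorem~\ref{main_result} here, but a direct argument is instructive and is what I would write first.

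First I would establish the a priori compactness that makes the whole scheme work. Fixing any continuous curve $m\in C([0,T];\mathcal{P}_1(\cR^d))$ and any $x\in\supp(m_0)$, the control $\alpha\equiv 0$ is admissible for \eqref{oc_problem} and, by the growth bound \eqref{erwrwqrqxa} on $A$ and Gr\"onwall, produces a trajectory staying in a ball whose radius depends only on $|x|$, $C_A$ and $T$; since $\ell(\cdot,0,\cdot,\cdot)$ and $g$ are continuous this yields a value bound $V_0$ uniform for $x$ in the compact set $\supp(m_0)$. Comparing an optimal $(\gamma,\alpha)$ against this competitor and using the coercivity $\underline{\ell}|\alpha|^p-C_\ell\le \ell$ together with $g\ge c_g$ gives a uniform bound on $\int_0^T|\alpha(s)|^p\dd s$. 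Feeding this back into the dynamics, with $|B|\le C_B$ from {\bf(H3)}{\rm(ii)}, H\"older's inequality and Gr\"onwall produce a compact set $K\subset\cR^d$ and a H\"older modulus, both independent of $x$ and of $m$, such that every optimal trajectory issued from $\supp(m_0)$ lies in the equicontinuous family $\Gamma_K:=\{\gamma\in\Gamma : \gamma(0)\in\supp(m_0),\ \gamma([0,T])\subset K,\ |\gamma(t)-\gamma(s)|\le C|t-s|^{1-1/p}\}$, which is compact by Arzel\`a--Ascoli. Consequently $\mathcal{C}:=\{\xi\in\mathcal{P}(\Gamma) : e_0\sharp\xi=m_0,\ \supp\xi\subset\Gamma_K\}$ is a convex and narrowly compact subset of $\mathcal{P}_1(\Gamma)$.

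Next I would define the best-response correspondence $\Phi\colon\mathcal{C}\rightrightarrows\mathcal{C}$ by letting $\Phi(\xi)$ be the set of $\eta\in\mathcal{C}$ concentrated on trajectories optimal for \eqref{oc_problem} with $m(t)=e_t\sharp\xi$. A fixed point $\xi^*\in\Phi(\xi^*)$ is exactly a solution of Problem~\ref{mfg_problem}. To apply Kakutani--Fan--Glicksberg I must check: (a) $\Phi(\xi)\neq\emptyset$, where existence of minimizers of \eqref{oc_problem} for fixed $m$ follows from the direct method, the $W^{1,p}\times L^p$ bound above giving weak compactness and the convexity of $\ell$ in $\alpha$ from {\bf(H1)}{\rm(iii)} giving weak lower semicontinuity, after which a Kuratowski--Ryll-Nardzewski measurable selection of optimal trajectories lets me push $m_0$ forward to a bona fide $\eta\in\mathcal{C}$; (b) each $\Phi(\xi)$ is convex and closed, immediate since the defining conditions (concentration on the $x$-fibered optimal sets and the marginal constraint $e_0\sharp\eta=m_0$) are preserved under convex combinations and narrow limits; and (c) $\Phi$ has closed graph.

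The main obstacle is step (c), the upper semicontinuity of $\Phi$: if $\xi_n\to\xi$ in $\mathcal{C}$, hence $e_\cdot\sharp\xi_n\to e_\cdot\sharp\xi$ in $C([0,T];\mathcal{P}_1(\cR^d))$, and $\eta_n\in\Phi(\xi_n)$ with $\eta_n\to\eta$, I must show that $\eta$-a.e.\ curve is optimal for the limiting measure flow. This requires a stability (essentially a $\Gamma$-convergence) result for \eqref{oc_problem} under perturbation of the coupling argument, built from the continuity of $\ell$ and $g$ in $\mu$ and the uniform estimates of the first step; concretely, one shows that optimal values are continuous in $m$ and that limits of optimal trajectories remain optimal. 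This stability statement is the real content, and is exactly the point that the approximation route sidesteps: there the discrete equilibria exist by a finite-dimensional fixed point (Proposition~\ref{uniqueness_mfg_discreto}), the associated measures on $\Gamma$ are pre-compact by the same a priori bounds, and Theorem~\ref{main_result} identifies every accumulation point as a Lagrangian equilibrium, so existence follows as a one-line corollary. I would therefore either carry out the stability analysis directly or, following the paper, defer to Section~\ref{convergence_result}.
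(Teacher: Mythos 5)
Your proposal is correct in spirit but follows a genuinely different route from the paper, whose entire proof of this theorem is the single line ``This follows from Theorem~\ref{main_result}{\rm(i)} below'': existence is obtained as a by-product of the convergence analysis, since the discrete equilibria exist by Brouwer (Proposition~\ref{existencia_caso_finito}), the associated measures $\xi^n$ on $\Gamma$ are pre-compact (Lemma~\ref{prop:compactness}), and every accumulation point is shown to solve Problem~\ref{mfg_problem}. You correctly anticipate this and offer it as a fallback. Your primary route --- a Kakutani--Fan--Glicksberg fixed point for a best-response correspondence on $\{\xi\in\P(\Gamma)\,:\,e_0\sharp\xi=m_0,\ \supp\xi\subset\Gamma_K\}$ --- is the classical Lagrangian-MFG existence argument (as in \cite{CH17,cannarsa_capuani_2018,cardaliaguet_meszaros_santambrogio_2018}) and is viable under {\bf(H1)}--{\bf(H4)}; it is self-contained and avoids any discretization, whereas the paper's route costs nothing extra because the approximation machinery is needed anyway for the numerics and simultaneously delivers convergence of the scheme. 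Two caveats on your sketch. First, your step (c), the closed graph of $\Phi$, i.e.\ stability of the set of optimal trajectories under convergence of $m$ in $C([0,T];\P_1(\cR^d))$, is the real content of the direct argument and you state it without proof; it requires combining the uniform $L^p$ control bound with lower semicontinuity of the running cost (via \cite[Corollary 3.24]{dacorogna89}, exactly as the paper uses in \eqref{ineq:El-semicont}) and continuity of the optimal value in $m$. Second, in the a priori estimate you invoke ``$g\ge c_g$'' to close the coercivity argument, but to get a compact set $K$ \emph{uniform in $m$} you cannot bound the competitor's terminal cost $g(\gamma^0(T),m(T))$ from above by continuity alone, since $\sup_\mu g(x,\mu)$ need not be finite; one must instead estimate the \emph{difference} $g(\gamma(T),m(T))-g(\gamma^0(T),m(T))\ge -L_g|\gamma(T)-\gamma^0(T)|$ using the Lipschitz property in {\bf(H2)} and absorb it by Young's inequality, which is precisely the device used in the paper's Lemma~\ref{lem:existencia-control-dis}. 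With these two points filled in, your direct argument would stand as an alternative proof.
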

\begin{proof} 
This follows from Theorem~\ref{main_result}{\rm(i)} below. 
\end{proof}

Let us now consider a uniqueness result that will be useful in the sequel. A function $\Phi\colon\RR^d\times \P_1(\RR^d)\to\RR$ is said to be {\it monotone}, in the sense of \cite[Section 2.3]{MR2295621}, if 
\be
\label{monotonia_Phi}
\ds \int_{\RR^d}\big(\Phi(x,\mu_1)-\Phi(x,\mu_2)\big)\dd(\mu_1-\mu_2)(x)  \geq 0 \quad \text{for all $\mu_1$, $\mu_2\in \P_1(\RR^d)$}.
\ee
\smallskip 

We consider the following additional assumption.
\smallskip 
\begin{itemize}
\item[{\bf(H5)}]  The following hold:\smallskip
\begin{enumerate}[{\rm(i)}]
\item The function $\ell$ is given by 
$$
\hspace{0.7cm}\ell(t,a,x,\mu)= \ell_0(t,a,x) + f(t,x,\mu)  \quad \text{for all } t\in [0,T],\, a\in \RR^{r},\, x\in\RR^d,\, \mu\in \P_1(\RR^d),
$$
where $\ell_0\colon [0,T]\times \RR^{r}\times \RR^d \to \RR$ satisfies {\bf(H1)} and $f\colon[0,T]\times \RR^d\times\P_1(\RR^d)\to \RR$ is continuous, bounded, and there exists $L_{f}>0$ such that, for all $(t,\mu)\in [0,T]\times \P_1(\RR^d)$,
$$
\hspace{0.4cm}|f(t,x,\mu) -f(t,y,\mu)|\leq L_{f} |x-y| \quad \text{for all }x,y\in \RR^d. 
$$
\item  For all $t \in [0,T]$, the functions $f(t,\cdot, \cdot)$ and $g$  satisfy~\eqref{monotonia_Phi}.
\end{enumerate}
\end{itemize}
\bigskip

The proof of the following result follows essentially from the arguments in the proof of \cite[Theorem 3.2.2]{These_Saeed_18}. However, since we work under a different set of assumptions, we provide its proof in the Appendix II of this work. 
\begin{theorem}
\label{unicidad_xi_star} 
Assume that {\bf(H1)}-{\bf(H5)} hold and that
\be
\label{unicidad_casi_segura}
\text{for all } m\in C([0,T];\P_1(\cR^d)), \; \text{problem  \eqref{oc_problem} has a unique solution for  $m_0\text{-a.e.} \;  x\in \cR^d$}.
\ee
Then Problem~\ref{mfg_problem} admits a unique solution.
\end{theorem}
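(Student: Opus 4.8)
The plan is to run the classical Lasry--Lions monotonicity argument, using the almost-sure uniqueness of optimal trajectories in \eqref{unicidad_casi_segura} in place of the usual strict monotonicity. The first step is to show that, under \eqref{unicidad_casi_segura}, any Lagrangian equilibrium is a deterministic transport of the initial datum. Given a solution $\xi^*$ of Problem~\ref{mfg_problem} with flow $m(t)=e_t\sharp\xi^*$, assumption \eqref{unicidad_casi_segura} applied to this $m$ guarantees that for $m_0$-a.e. $x$ the problem \eqref{oc_problem} has a unique optimal pair $(\gamma^x,\alpha^x)$. Disintegrating $\xi^*$ along $e_0$ as $\xi^*=\int_{\RR^d}\xi^{*,x}\,\dd m_0(x)$ and using that $\xi^*$ is concentrated on optimal trajectories, each conditional measure $\xi^{*,x}$ is forced to be the Dirac mass $\delta_{\gamma^x}$, so that $\xi^*=T\sharp m_0$ with $T(x)=\gamma^x$, and in particular $m(t)=(x\mapsto\gamma^x(t))\sharp m_0$.

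Next I would take two equilibria $\xi_1^*,\xi_2^*$, with flows $m_1,m_2$, optimal maps $T_1,T_2$, trajectories $\gamma_i^x=T_i(x)$, and controls $\alpha_i^x$; write $J_m(\gamma,\alpha)$ for the cost in \eqref{oc_problem}. For $m_0$-a.e. $x$ the curve $\gamma_2^x$ is admissible for \eqref{oc_problem} with data $(x,m_1)$, so optimality of $\gamma_1^x$ gives $J_{m_1}(\gamma_1^x,\alpha_1^x)\le J_{m_1}(\gamma_2^x,\alpha_2^x)$, and symmetrically $J_{m_2}(\gamma_2^x,\alpha_2^x)\le J_{m_2}(\gamma_1^x,\alpha_1^x)$. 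Using the splitting $\ell=\ell_0+f$ from \textbf{(H5)}(i), the $\ell_0$-contributions coincide on the two sides when the two inequalities are added, so they cancel. Integrating the sum against $m_0$ and rewriting the $f$- and $g$-terms through the push-forward identities $\int_{\RR^d}f(s,\gamma_i^x(s),\mu)\,\dd m_0(x)=\int_{\RR^d}f(s,y,\mu)\,\dd m_i(s)(y)$ (and the analogous identity at time $T$ for $g$), the sum collapses to
\[
\begin{aligned}
&\int_0^T\!\!\int_{\RR^d}\big(f(s,y,m_1(s))-f(s,y,m_2(s))\big)\,\dd(m_1(s)-m_2(s))(y)\,\dd s \\
&\qquad+\int_{\RR^d}\big(g(y,m_1(T))-g(y,m_2(T))\big)\,\dd(m_1(T)-m_2(T))(y)\;\le\;0.
\end{aligned}
\]
By the monotonicity in \textbf{(H5)}(ii) together with \eqref{monotonia_Phi}, each of the two terms is nonnegative, hence both vanish; moreover, tracking the slacks of the two integrated optimality inequalities shows that both are in fact equalities.

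To conclude, the equality $\int_{\RR^d}J_{m_1}(\gamma_1^x,\alpha_1^x)\,\dd m_0(x)=\int_{\RR^d}J_{m_1}(\gamma_2^x,\alpha_2^x)\,\dd m_0(x)$, combined with the pointwise bound $J_{m_1}(\gamma_1^x,\alpha_1^x)\le J_{m_1}(\gamma_2^x,\alpha_2^x)$, forces pointwise equality for $m_0$-a.e. $x$; that is, $(\gamma_2^x,\alpha_2^x)$ is also optimal for the data $(x,m_1)$. Since \eqref{unicidad_casi_segura} makes that optimizer unique, $\gamma_2^x=\gamma_1^x$ for $m_0$-a.e. $x$, whence $\xi_2^*=T_2\sharp m_0=T_1\sharp m_0=\xi_1^*$.

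The step I expect to be the main obstacle is the reduction in the first paragraph: justifying the disintegration and a measurable selection so that an equilibrium is genuinely the push-forward $T\sharp m_0$ of the initial datum, and verifying that all cost terms are finite and that the push-forward identities hold. Both of these rest on the compactness of the supports (from \textbf{(H4)} together with the a priori bounds on optimal trajectories) and on the boundedness of $f$ in \textbf{(H5)}(i) and the growth and lower bounds on $\ell$ and $g$ from \textbf{(H1)}--\textbf{(H2)}; the a.e. uniqueness in \eqref{unicidad_casi_segura} is precisely what makes the optimal-trajectory map $T$ well defined and Borel measurable.
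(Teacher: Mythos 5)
Your proposal is correct and follows essentially the same route as the paper's Appendix II proof: reduce each equilibrium to a push-forward $\xi^i=\gamma^i\sharp m_0$ using \eqref{unicidad_casi_segura}, then run the Lasry--Lions crossed-inequality argument with the $\ell_0$-terms cancelling and {\bf(H5)}{\rm(ii)} forcing the monotone terms to vanish. The only cosmetic differences are that the paper argues by contradiction (strict inequalities versus your equality-of-slacks conclusion) and obtains the map $x\mapsto\gamma^{i,x}$ via a Borel measurable selection rather than via disintegration.
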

\section{Approximation of the value function associated with an optimal control problem}
\label{approximation_value_function_general}
In this section, we consider the approximation of a family of optimal control problems, depending on a fixed   $m\in C\left([0,T];\P_1(\RR^d)\right)$. The resulting scheme, whose main properties are shown in the Appendix I of this work,  will be the building block of the approximation of a MFG equilibrium that we propose in the next section. 

For $\alpha \in L^{p}([0,T];\RR^r)$, $t \in [0, T)$,  and $x \in \RR^d$, let us consider the equation 
\be
\label{ecuacion_controlada}
\dot{\gamma}(s)= A(s,\gamma(s))+ B(s,\gamma(s))\alpha(s), \quad \text{for a.e. } s\in (t,T), \quad \gamma(t)=x.
\ee
Under assumption {\bf(H3)}, \eqref{ecuacion_controlada} admits a unique solution.  Let us define the value function $v\colon [0,T]\times\cR^d\to \cR$ by
\be
\label{value_function_dependent_on_m}
v(t,x)=\inf\left\{\int_t^T\ell\left(s, \alpha(s),\gamma(s),m(s) \right)\dd s + g(\gamma(T), m(T)) \, \bigg| \, \alpha \in L^{p}\left([0,T];\RR^r\right), \; (\alpha, \gamma)  \mbox{ satisfies \eqref{ecuacion_controlada}}\right\},
\ee
for all $(t,x)\in [0,T]\times\RR^d$.  By ${\bf (H1)}$ and ${\bf (H2)}$, the function $v$ is well-defined and  solves the following Hamilton-Jacobi-Bellman equation in the viscosity sense {\rm(}see e.g. \cite{MR1613876,MR1484411}{\rm)}:
\be 
\label{eq:HJB}
\left\{\ba{rcl}
-\partial_t v(t,x)+ H(t,x,\nabla v(t,x),m(t))&=&0\quad\text{for }(t,x)\in (0,T)\times\cR^d,\\[6pt]
v(T,x)&=& g(x,m(T))\quad\text{for }x\in\cR^d,
\ea
\right.
\ee
where, for all $t\in [0,T]$, $x\in \RR^d$, $p\in \RR^d$, and $\mu \in \P_1(\RR^d)$, 
$$
H(t,x,p,\mu) =\sup_{\alpha\in \cR^r}\left\{ -\ell(t,\alpha,x,\mu)-\langle p, A(t,x)+B(t,x)\alpha\rangle \right\}.
$$
Moreover,    \cite[Theorem 2.1]{Da-Lio:2011aa} implies that $v$ is the unique viscosity solution to \eqref{eq:HJB}.
\subsection{A semi-discrete scheme}\label{semi_discrete_section}
Let $N_t\in \NN$ and set 
\be
\label{ienes}
\I=\{0,\hdots,N_t\}\quad\text{and}\quad\I^*=\I\setminus\{N_t\}.
\ee
For any $(k,x) \in \I^{*}\times \RR^d$, we also set   
\begin{equation}
\label{def_gamma_A_k}
\ba{rcl}
\Gamma_{k,x}&=&\{\gamma=(\gamma_k, \hdots, \gamma_{N_{t}}) \; | \; (\forall \, j=k, \hdots, N_{t}) \; \; \gamma_{j} \in \RR^d, \; \gamma_{k}=x\},\\[4pt]
\A_{k}&=& \{\alpha=(\alpha_k, \hdots, \alpha_{N_{t}-1}) \; | \; (\forall \, j=k, \hdots, N_{t}-1) \;  \; \alpha_{j} \in \RR^r\}.
\ea
\end{equation}
Set  $\Delta t=T/N_t$ and $t_k=k \Delta t$ for all $k\in \I$.  Let us define $J_{k,x}\colon \A_{k}\to \cR$ by
$$
J_{k, x}(\alpha)=\Delta t \sum_{j=k}^{N_t-1}\ell\left(t_j, \alpha_j, \gamma_j,m(t_j)\right)+g\left(\gamma_{N_t},m(T)\right),
$$
where $\gamma \in \Gamma_{k,x}$ is given by  
\be 
\label{def:dis-state}
\gamma_{j+1}=\gamma_j+\Delta t\left( A(t_j, \gamma_j)+B(t_j, \gamma_j)\alpha_j\right) \quad \text{for all }j=k, \dots, N_t-1.
\ee

We consider the following semi-discrete scheme: 
\be
\label{semidiscrete-scheme}
\left\{\ba{rcl}
v_k(x)&=&\inf\left\{J_{k, x}(\alpha)\;\big|\;\alpha\in\A_{k}\right\}\quad\text{for all } k\in \I^{*}, \; x\in\cR^d, \\[6pt]
v_{N_t}(x)&=&g(x,m(T))\quad\text{for all }x\in\cR^d. 
\ea
\right.
\ee

The discrete time value function~\eqref{semidiscrete-scheme} satisfies the following dynamic programming equation
\be
\label{semidiscrete-scheme-dpp}
\left\{\ba{rcl}
v_k( x)&=&\underset{\a\in\cR^r}\min \; \left\{\Delta t \ell(t_k,\a,x,m(t_k))+v_{k+1}\left(x+\Delta t[A(t_k, x)+ B(t_k, x)\a]\right)\right\}\quad \text{for all }k\in \I^{*},\; x\in\cR^d, \\[8pt]
v_{N_t}(x)&=&g(x,m(T))\quad\text{for all }x\in\cR^d.
\ea
\right.
\ee

In the Appendix I of this work we show the existence of $L_v>0$, independent of  $\Delta t$ and $m$, such that  
\be
\label{lem:Lips-v-semidis}
|v_k(x)-v_k(y)|\leq L_v|x-y| \quad \text{for all } k \in \I,\, x,y\in \RR^d.
\ee
Moreover, for all  $k \in \I^{*}$ and $x\in\cR^d$, the discrete time optimal control problem $\min\left\{ J_{k,x}(\alpha) \big| \;   \alpha \in \A_{k} \right\}$ admits at least one solution (see Lemma \ref{lem:existencia-control-dis}). In turn, \eqref{semidiscrete-scheme-dpp} and the Lipschitz property   \eqref{lem:Lips-v-semidis} yield the existence of $\widehat{C}>0$, independent of $\Delta t$, $m$, $k$, and $x$,  such that for every  $\widehat{\alpha} \in \argmin\left\{J_{k, x}(\alpha) \big| \;   \alpha \in \A_{k} \right\}$, we have 
\be\label{controles_acotados_unif} \max\left\{ \big|\widehat{\alpha}_{j} \big| \; | \; j=k,\hdots, N_{t}-1 \right\} \leq \widehat{C}
\ee
(see Lemma \ref{lem:controles-dis-acotados}). Thus, defining  
\be
\label{bounded_control_set}
\widehat{\A}_k= \left\{ \alpha \in \A_k \,| \, \mbox{$\alpha$ satisfies \eqref{controles_acotados_unif}} \right\} \quad \mbox{for all }k \in \I^{*},
\ee
we have that 
\be
\label{ecuacion_v_n_restriccion_alpha_acotado}
v_k( x)=  \inf\left\{J_{k, x}(\alpha)\,\big|\,\alpha\in\widehat{\A}_k\right\} \quad \mbox{for all }k \in \I^{*},\,x\in\cR^d.
\ee 

Let us state the following convergence result whose proof can be found in the Appendix I of this work. Consider two sequences $(N_t^n)_{n\in\NN}\subset\NN$ and $(m^n)_{n\in\NN}\subset C\left([0,T];\P_1(\cR^d)\right)$ such that $N_{t}^{n}\to \infty$ and $m^n\to m$ as $n\to\infty$. Set $\I^n=\{0,\hdots,N_t^n\}$,  $\Delta t_n=T/N_{t}^{n}$, $t_k^n=k \Delta t_n$ ($k\in\I^{n}$), and denote by $v^n$ the discrete value function \eqref{semidiscrete-scheme} associated with $N_{t}^{n}$ and $m^n$.
\begin{proposition}\label{prop:conv-value-function}
For every compact set $K\subset\cR^d$ we have  
$$
\sup_{(k,x)\in\I^n\times K}\left|v_{k}^n(x)-v(t_k^n,x)\right|\underset{n\rightarrow \infty}{\longrightarrow} 0. 
$$
\end{proposition}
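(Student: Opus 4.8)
The plan is to prove the convergence through the half-relaxed limit method of Barles and Souganidis, exploiting that, by \cite{Da-Lio:2011aa}, $v$ is the \emph{unique} viscosity solution of the terminal value problem \eqref{eq:HJB}. I would work with the half-relaxed limits
\[
\overline v(t,x)=\limsup_{\substack{n\to\infty,\ y\to x\\ t_k^n\to t}} v^n_{k}(y),\qquad
\underline v(t,x)=\liminf_{\substack{n\to\infty,\ y\to x\\ t_k^n\to t}} v^n_{k}(y),
\]
so that $\underline v\le\overline v$ holds by construction. The first task is \emph{stability}: that these limits are finite locally. Local boundedness follows from \eqref{lem:Lips-v-semidis}, which is uniform in $\Delta t$ and in $m$, together with a uniform $L^\infty$ bound on compacts. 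The lower bound $v^n_k\ge c_g-C_\ell T$ comes from {\bf(H1)}(i) and {\bf(H2)}, while the upper bound follows by inserting the admissible control $\alpha\equiv0$ and using {\bf(H3)}(i) to keep the Euler trajectory in a fixed compact set, bounding the running cost by {\bf(H1)}(i) and the terminal cost by the Lipschitz bound in {\bf(H2)}. The same ingredients, combined with the uniform control bound \eqref{controles_acotados_unif}, yield the per-step oscillation estimate $|v^n_{k}(x)-v^n_{k+1}(x)|\le C\Delta t_n$ on compacts. I would also note that the scheme \eqref{semidiscrete-scheme-dpp} is \emph{monotone}, since $v_{k+1}$ enters through an order-preserving minimum.

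The core of the argument is \emph{consistency}. Fixing a smooth test function $\phi$, writing \eqref{semidiscrete-scheme-dpp} for $\phi$ and expanding $\phi\big(t_{k+1}^n,x+\Delta t_n[A+B\alpha]\big)$ to first order in $\Delta t_n$, one obtains
\[
\frac{\phi(t_k^n,x)-[\,\text{scheme applied to }\phi\,]}{\Delta t_n}
= -\partial_t\phi(t_k^n,x)+H\big(t_k^n,x,\nabla\phi(t_k^n,x),m^n(t_k^n)\big)+o(1),
\]
with the $o(1)$ uniform on compacts; {\bf(H1)}(i) guarantees that the inner maximisation is attained on a fixed ball, so the expansion is legitimate. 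The point where $m^n\to m$ enters is the convergence $H(t_k^n,x,p,m^n(t_k^n))\to H(t,x,p,m(t))$: since $m^n\to m$ in $C([0,T];\P_1(\RR^d))$ and, by coercivity, the supremum defining $H$ can be restricted to a fixed compact set of controls, the continuity of $\ell$ in its measure argument yields this limit locally uniformly. Passing to the half-relaxed limits in the standard way then shows that $\overline v$ is a viscosity subsolution and $\underline v$ a viscosity supersolution of \eqref{eq:HJB}; the terminal condition is matched because $v^n_{N_t^n}(\cdot)=g(\cdot,m^n(T))\to g(\cdot,m(T))$ locally uniformly by {\bf(H2)} and $m^n(T)\to m(T)$.

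Finally, the comparison principle underlying the uniqueness result of \cite{Da-Lio:2011aa} gives $\overline v\le\underline v$, whence $\overline v=\underline v=v$. By the half-relaxed limit theorem this equality is equivalent to the local uniform convergence of the interpolants toward $v$, and combining it with the equi-Lipschitz bound \eqref{lem:Lips-v-semidis} in $x$ and the $O(\Delta t_n)$ time oscillation upgrades it to the claimed estimate $\sup_{(k,x)\in\I^n\times K}|v^n_k(x)-v(t_k^n,x)|\to0$. I expect the consistency step to be the main obstacle, specifically the uniform control of the Hamiltonian under the simultaneous limits $\Delta t_n\to0$ and $m^n\to m$ when the controls are a priori unbounded; the coercivity in {\bf(H1)}(i) and the uniform bound \eqref{controles_acotados_unif} are precisely what make this manageable. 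An alternative, more self-contained route avoids viscosity theory altogether: a direct two-sided comparison that discretises a bounded near-optimal control of \eqref{oc_problem} for the upper inequality, and builds a piecewise-constant continuous control from a discrete optimiser (bounded by \eqref{controles_acotados_unif}) for the lower inequality, estimating in both cases the $O(\Delta t_n)$ gap between the Euler and exact trajectories and the error from replacing $m$ by $m^n$.
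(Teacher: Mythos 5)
Your proposal follows essentially the same route as the paper's proof: half-relaxed limits in the Barles--Souganidis framework, local boundedness obtained from the zero control together with the coercivity in {\bf(H1)}(i) and the uniform control bound \eqref{controles_acotados_unif}, sub/supersolution properties of the relaxed limits via smooth test functions, the comparison principle of \cite{Da-Lio:2011aa}, and the standard lemma converting equality of the relaxed limits into local uniform convergence. The only point where you are slightly too quick is the terminal condition: one must show $\overline v(T,\cdot)=\underline v(T,\cdot)=g(\cdot,m(T))$ along arbitrary sequences $t^n_{k(n)}\to T$, not only for $k(n)=N_t^n$, which the paper handles with the bound $|\alpha^n_j|\le \widehat{C}$ and the resulting $O(T-t^n_{k(n)})$ estimates on the running cost and on $|\gamma^n_{N_t^n}-x^n|$ --- precisely the oscillation estimate you already state, so the ingredient is present even if not explicitly assembled.
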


The following remark will motivate the fully-discrete scheme, discussed in the next section, to approximate $ v(0,x)$, for $x$ in a given compact set.  
\begin{remark}[Restriction of the approximation scheme to a family of compact sets]\label{restriccion_compactos_semi_discreto}
In view of \eqref{def:dis-state} and \eqref{ecuacion_v_n_restriccion_alpha_acotado}, if $K_0\subseteq \RR^d$ is a nonempty compact set, then the values $\{v_{0}(x) \; | \; x\in K_0\}$ are determined by the following relations 
\be
\label{semidiscrete-scheme-dpp_bola}
\left\{\ba{rcl}
v_k( x)&=&\underset{\a\in\ov{\mathrm{B}}(0,\widehat{C})}\min \; \left\{\Delta t \ell(t_k, \a, x,m(t_k))+v_{k+1}\left(x+\Delta t[A(t_k, x)+ B(t_k, x)\a]\right)\right\} \\
& \; &  \hspace{7cm} \text{for all }k\in \I^{*},\,x\in K_{k}, \\[6pt]
v_{N_t}(x)&=&g(x,m(T)) \quad \text{for all } x\in K_{N_{t}},
\ea\right.
\ee
where, recalling Remark \ref{despues_de_hipotesis}{\rm{(i)}} and {\bf(H3)}{\rm(ii)}, the sets $K_{k}$ {\rm(}$k\in \I${\rm)} are defined by
\be\label{compactos_caso_semi_discreto}
K_{k+1} := K_{k} +\left(\Delta t\left[ C_{A}\big(1+\sup_{x\in K_{k}}|x|\big)+C_{B} \widehat{C}\right]\right) \ov{\mathrm{B}}(0,1)  \quad \text{for all } k \in \I^*.
\ee
By the discrete Gr\"onwall's lemma, it is easy to check that the family of compact sets $\{K_{k} \, | \, k\in \I\}$ is bounded, uniformly with respect to $\Delta t$ {\rm(}see the proof of Lemma~\ref{the_big_compact} for a justification of this fact in a slightly different framework{\rm)}. 
\end{remark}
\subsection{A fully-discrete approximation}
\label{fully_discrete_hjb}  
We introduce in this section a fully-discrete approximation of $\{ v(0,x) \; | \; x\in K_0\}$, where $K_0\subset \RR^d$ is a given nonempty compact set. This scheme will be shown to be particularly adapted to build  an approximation of Problem \ref{mfg_problem}.

From Remark~\ref{despues_de_hipotesis}{\rm(ii)}, we can assume, without loss of generality, that  $B$ has the following form 
\be
\label{B_structure}
B(t,x)= \left( \ba{c} B_{1}(t,x) \\
B_2(t,x)\ea \right)\quad\mbox{for all }(t,x)\in [0,T]\times \RR^d,
\ee 
where $B_{1}\colon  [0,T]\times\cR^d\to \RR^{r \times r}$ is such that $B_1(t,x)$ is invertible for all $(t,x) \in [0,T]\times \RR^d$ and $B_{2}\colon  [0,T]\times\cR^d\to \RR^{(d-r) \times r}$. 
Accordingly, we also decompose 
\be
\label{A_structure}
A(t,x)=\left(\ba{c}A_1(t,x)\\ 
A_2(t,x)\ea\right)\quad\mbox{for all }(t,x)\in[0,T]\times\RR^d,
\ee
where $A_1\colon [0,T]\times\cR^d\to\cR^r$ and $A_2\colon [0,T]\times\cR^d\to \cR^{d-r}$. For $x \in \RR^d$ we set $x=(\mathrm{x}_1, \mathrm{x}_2)$, where $\mathrm{x}_1\in \RR^{r}$ and $\mathrm{x}_2\in \RR^{d-r}$ denote the first $r$ and the last $d-r$ components of $x$, respectively.  

Let  $N_t\in\NN$, let $\Delta t>0$ be as in the previous section, and let $N_s\in \NN$ be such that $\Delta x:=1/N_s\leq \Delta t$. We define  
$$
\G_r =\left\{i\Delta x\;|\;i\in\ZZ^r\right\}, \quad \G_{d-r} =\left\{i\Delta x\;|\;i\in\ZZ^{d-r}\right\}, \quad \mbox{and} \quad \G =\G_r\times\G_{d-r}.
$$

Given a regular mesh $\mathscr{T}$ with vertices in $\G_{d-r}$, let $(\beta_{\mathrm{x}_2})_{\mathrm{x}_2\in\G_{d-r}}$ be a $\QQ_1$ basis, i.e. for every $\mathrm{x}_2\in\G_{d-r}$, $\beta_{\mathrm{x}_2}$ is a nonnegative polynomial of partial degree less than or equal to $1$ on each element of $\mathscr{T}$, $\beta_{\mathrm{x}_2}(\mathrm{x}_2)=1$, $\beta_{\mathrm{x}_2}(\mathrm{x}_2')=0$ for all $\mathrm{x}_2'\in \G_{d-r}$ with $\mathrm{x}_2'\neq \mathrm{x}_2$, $\sum_{\mathrm{x}_2' \in\G_{d-r}}\beta_{\mathrm{x}_2'}(z )=1$ for all $z\in \RR^{d-r}$, and the support $\supp(\beta_{\mathrm{x}_2})$ of $\beta_{\mathrm{x}_2}$ satisfies
\be\label{support_beta_x}
\supp(\beta_{\mathrm{x}_2}) \subseteq  \{ \mathrm{y}_2 \in \RR^{d-r} \; | \; |\mathrm{y}_2 -\mathrm{x}_2 | \leq C_I \Delta x \} \hspace{0.3cm} \mbox{for some $C_I>0$,  independent of $\Delta x$}.
\ee

Let $\widehat{C}>0$ be such that \eqref{controles_acotados_unif} holds.  Inspired by Remark~\ref{restriccion_compactos_semi_discreto}, and taking into account \eqref{support_beta_x}, we redefine the family of compact sets \eqref{compactos_caso_semi_discreto} as
\be\label{compacts_Ks}
K_{k+1} := K_{k} +\left(\Delta t\left[ C_{A}\big(1+\sup_{x\in K_{k}}|x|\big)+C_{B}\widehat{C}\right]+C_{I}\Delta x\right) \ov{\mathrm{B}}(0,1)  \quad \text{for all } k \in \I^*.
\ee
\begin{lemma}\label{the_big_compact} There exists a nonempty compact set $K\subset \RR^{d}$, independent of $\Delta t$ and $\Delta x$ as long as $\Delta x /\Delta t \leq 1$, such that 
$$
K_{k}\subset K_{k+1}\subset K\quad\text{for all }k\in \I^*.
$$
\end{lemma}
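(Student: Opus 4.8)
The plan is to collapse the set-valued recursion \eqref{compacts_Ks} into a scalar recursion for the radii $R_k := \sup_{x\in K_k}|x|$, and then to bound that recursion uniformly in the discretization parameters by a discrete Grönwall argument. The compactness and nonemptiness of the limiting set $K$ will be automatic once we produce a uniform bound, since $K$ can simply be taken to be a large closed ball.

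First I would dispose of the inclusion $K_k\subset K_{k+1}$, which is immediate: the origin belongs to $\ov{\mathrm{B}}(0,1)$ and the scalar factor multiplying $\ov{\mathrm{B}}(0,1)$ in \eqref{compacts_Ks} is nonnegative, so the Minkowski sum defining $K_{k+1}$ contains $K_k+\{0\}=K_k$. For the second inclusion, the key step is to estimate $R_{k+1}$. Every $y\in K_{k+1}$ has the form $y=x+z$ with $x\in K_k$ and $|z|\le \Delta t[C_A(1+R_k)+C_B\widehat{C}]+C_I\Delta x$, so the triangle inequality gives
$$
R_{k+1}\le R_k+\Delta t\big[C_A(1+R_k)+C_B\widehat{C}\big]+C_I\Delta x.
$$
Invoking the standing hypothesis $\Delta x\le\Delta t$ to replace $C_I\Delta x$ by $C_I\Delta t$, this rearranges into the linear recurrence $R_{k+1}\le(1+C_A\Delta t)R_k+\Delta t(C_A+C_B\widehat{C}+C_I)$.

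I would then iterate this recurrence. Writing $a=1+C_A\Delta t$ and $b=\Delta t(C_A+C_B\widehat{C}+C_I)$, induction yields $R_k\le a^kR_0+b(a^k-1)/(a-1)$. Since $k\le N_t$ and $N_t\Delta t=T$, the elementary estimate $a^k\le(1+C_A\Delta t)^{N_t}\le e^{C_AT}$ holds, while $b/(a-1)=(C_A+C_B\widehat{C}+C_I)/C_A$ carries no dependence on $\Delta t$. Hence
$$
R_k\le e^{C_AT}R_0+\frac{C_A+C_B\widehat{C}+C_I}{C_A}\big(e^{C_AT}-1\big)=:R^*,
$$
a bound depending only on the data and on $R_0=\sup_{x\in K_0}|x|<\infty$ (finite by compactness of $K_0$), but independent of $\Delta t$ and $\Delta x$. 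Taking $K:=\ov{\mathrm{B}}(0,R^*)$, one gets $K_k\subseteq\ov{\mathrm{B}}(0,R_k)\subseteq\ov{\mathrm{B}}(0,R^*)=K$ for every $k$, which is the desired uniform inclusion.

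The argument is essentially routine, so the only point demanding care is that the bound be \emph{uniform} in $\Delta t$ and $\Delta x$, and I expect this to be the crux rather than any genuine difficulty. It is exactly here that the constraint $\Delta x/\Delta t\le1$ is used, to absorb the mesh term $C_I\Delta x$ into an $O(\Delta t)$ contribution, and where the identity $N_t\Delta t=T$ keeps the geometric factor $(1+C_A\Delta t)^{N_t}$ bounded by $e^{C_AT}$ as $\Delta t\to0$. Dropping either ingredient would break the uniformity: the grid term would no longer be controlled by the time step, or the amplification factor would be unbounded.
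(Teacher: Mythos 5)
Your proof is correct and follows essentially the same route as the paper's: both reduce \eqref{compacts_Ks} to the scalar recurrence $c_{k+1}\le(1+C_A\Delta t)c_k+\Delta t(C_A+C_B\widehat{C}+C_I)$ for the radii (absorbing $C_I\Delta x$ via $\Delta x/\Delta t\le1$) and close with the discrete Gr\"onwall lemma, which you have merely unfolded explicitly into the geometric-sum bound $e^{C_AT}R_0+(C_A+C_B\widehat{C}+C_I)(e^{C_AT}-1)/C_A$. No gaps.
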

\begin{proof} The inclusion $K_k\subset K_{k+1}$ for $k\in \I^*$ follows directly from \eqref{compacts_Ks}.  Let $k\in \I$ and set $c_k=\sup_{x\in K_{k}} |x|$. Equation~ \eqref{compacts_Ks} yields
$$
\ba{rcl}
c_{k+1}&\leq & c_{k}+ \left(\Delta t\left[ C_{A}\big(1+c_{k}\big)+C_{B}\widehat{C} +C_{I}\frac{\Delta x}{\Delta t}\right]\right) \\[6pt]
\; &\leq & \big(1+ \Delta tC_{A}\big) c_{k} + \Delta t (C_{A}+ C_{B}\widehat{C}  +C_{I}).
\ea
$$
Thus, by the discrete Gr\"onwall's lemma\footnote{Consider three sequences $(p_n)_{n\in \NN}\subset [0,\infty)$, $(q_n)_{n\in \NN}\subset\RR$, and $(r_n)_{n\in \NN}\subset\RR$ such that 
$$
r_{n+1} \leq p_{n+1} r_{n} + q_{n+1}\quad\text{for all }n\in\NN.
$$
Then, setting $\mathtt{P}_{n}= \prod_{j=1}^{n} p_j$ and $\mathtt{P}_{k,n}= \prod_{j=k}^{n} p_j$, we have
$$
r_{n}\leq\mathtt{P}_{n}r_0+\sum_{k=1}^{n} \mathtt{P}_{k,n} q_k\quad\text{for all }n\in\NN.
$$
}, there exists  $\ov{c}>0$, independent of $\Delta t$ and $\Delta x$,  such that $\max\{c_k \, | \, k \in \I\} \leq \ov{c}$. The result follows.
\end{proof}
Notice that  {\bf (H3)}  yields the existence of  $c_K>0$ such that 
\be\label{c_k}
|B_1(t, x)^{-1}|\leq c_K \quad \text{for all }t\in [0,T],\,x\in K. 
\ee
 
Let us fix $k\in \I^*$ and $x\in  \G$. For any $\mathrm{y}_1\in \RR^r$, let us set
\be
\label{definition_alpha_n}
\ba{rcl}
\alpha (k,x, \mathrm{y}_1)&:=&B_1(t_k, x)^{-1}\left[ \frac{\mathrm{y}_1-\mathrm{x}_1}{\Delta t}-A_1(t_k, x)\right]\in \RR^r,\\[6pt]
\ydos(k,x,\mathrm{y}_1)&:= &  \mathrm{x}_2 +\Delta t \left[A_2(t_k, x)+B_2(t_k,x)\alpha(k, x, \mathrm{y}_1)\right]  \in\RR^{d-r},
\ea
\ee
and  define the finite sets
\be
\label{definicion_de_los_S}
\ba{rcl}
\SS_{k+1}^1(x)&=&\left\{\mathrm{y}_1\in\G_r\; \big|\; |\alpha (k,x, \mathrm{y}_1)|\leq  \widehat{C} \right\}, \\[6pt]
\SS_{k+1}^2(x,\mathrm{y}_1)&=&\left\{\mathrm{y}_2 \in\G_{d-r}\; \big|\; \ydos(k,x,\mathrm{y}_1)\in \supp{\beta_{\mathrm{y}_2}} \right\} \quad \text{for }  \mathrm{y}_1\in \SS_{k+1}^1(x),\\[6pt]
\SS_{k+1}(x)&=&\left\{(\mathrm{y}_1,\mathrm{y}_2)\in\G\; \big|\; \mathrm{y}_1\in\SS_{k+1}^1(x),\;      \mathrm{y}_2\in \SS_{k+1}^2(x,\mathrm{y}_1)\right\}.
\ea
\ee
Notice that \eqref{definition_alpha_n} and \eqref{compacts_Ks} imply 
\be
\label{cotas_inductivas}
\SS_{k+1}(x)\subseteq K_{k+1}\quad \text{for all } k\in \I^*,\,x\in K_{k}.
\ee
Let us define 
\be
\label{conjunto_de_parametros}
\widehat{\Delta}= \big\{(\Delta t, \Delta x) \in  (0,\infty)^2 \, | \, \Delta x/\Delta t\leq \min\{1,\widehat{C}/c_K\}\big\}.
\ee
\begin{lemma}
\label{S_1_nonempty} 
Let $c_K>0$ be as in \eqref{c_k} and suppose that $(\Delta t,\Delta x) \in \widehat{\Delta}$.  Then, for any $k\in\I^*$ and $x\in K_k\cap \G$, the set $\SS_{k+1}(x)$ is nonempty. 
\end{lemma}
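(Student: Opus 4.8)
The plan is to produce an explicit element of $\SS_{k+1}(x)$ by first exhibiting an admissible first block $\mathrm{y}_1\in\SS_{k+1}^1(x)$ and then checking that the corresponding second block $\SS_{k+1}^2(x,\mathrm{y}_1)$ is automatically nonempty. The defining inequality of $\widehat{\Delta}$, namely $\Delta x/\Delta t\leq \widehat{C}/c_K$, will be used precisely at the step that controls the size of the discrete control $\alpha(k,x,\mathrm{y}_1)$.

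First I would observe that, since $x\in K_k\cap\G$, we have $\mathrm{x}_1\in\G_r$ and, by Lemma~\ref{the_big_compact}, $x\in K_k\subset K$, so the uniform bound \eqref{c_k} on $|B_1(t_k,x)^{-1}|$ applies (with $t_k\in[0,T)$). Consider the point $z:=\mathrm{x}_1+\Delta t\,A_1(t_k,x)\in\RR^r$ and let $\mathrm{y}_1\in\G_r$ be a nearest grid point to $z$ in the maximum norm; since $\G_r=\{i\Delta x\,|\,i\in\ZZ^r\}$, each coordinate of $z$ lies within $\Delta x/2$ of a multiple of $\Delta x$, hence $|\mathrm{y}_1-z|\leq\Delta x/2$. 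With this choice the definition \eqref{definition_alpha_n} rewrites as $\alpha(k,x,\mathrm{y}_1)=B_1(t_k,x)^{-1}(\mathrm{y}_1-z)/\Delta t$, so that
$$
|\alpha(k,x,\mathrm{y}_1)|\leq |B_1(t_k,x)^{-1}|\,\frac{|\mathrm{y}_1-z|}{\Delta t}\leq c_K\,\frac{\Delta x}{2\Delta t}\leq \frac{c_K}{2}\cdot\frac{\widehat{C}}{c_K}=\frac{\widehat{C}}{2}\leq \widehat{C},
$$
where the penultimate inequality is exactly the constraint encoded in $(\Delta t,\Delta x)\in\widehat{\Delta}$. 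Therefore $\mathrm{y}_1\in\SS_{k+1}^1(x)$ and this set is nonempty.

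Next, for this $\mathrm{y}_1$ I would invoke the partition-of-unity property of the $\QQ_1$ basis: since each $\beta_{\mathrm{y}_2}$ is nonnegative and $\sum_{\mathrm{y}_2\in\G_{d-r}}\beta_{\mathrm{y}_2}(z')=1$ for every $z'\in\RR^{d-r}$, evaluating at $z':=\ydos(k,x,\mathrm{y}_1)$ forces $\beta_{\mathrm{y}_2}(z')>0$ for at least one $\mathrm{y}_2\in\G_{d-r}$, whence $z'\in\supp\beta_{\mathrm{y}_2}$ and $\mathrm{y}_2\in\SS_{k+1}^2(x,\mathrm{y}_1)$. Combining the two steps gives $(\mathrm{y}_1,\mathrm{y}_2)\in\SS_{k+1}(x)$, which is thus nonempty.

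The only genuinely delicate point is the control estimate in the second paragraph; everything else is bookkeeping and an immediate consequence of the partition of unity. The key idea is that choosing $\mathrm{y}_1$ to be the nearest grid point to $\mathrm{x}_1+\Delta t\,A_1(t_k,x)$ makes the bracket in \eqref{definition_alpha_n} of order $\Delta x/\Delta t$ rather than of order one, so the admissibility threshold $\widehat{C}$ is met exactly because of the ratio constraint $\Delta x/\Delta t\leq \widehat{C}/c_K$ built into $\widehat{\Delta}$; here the uniform invertibility bound \eqref{c_k}, valid on the $\Delta t$- and $\Delta x$-independent compact set $K$ furnished by Lemma~\ref{the_big_compact}, is what keeps the constant $c_K$ under control.
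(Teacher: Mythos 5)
Your proof is correct and follows essentially the same route as the paper's: the paper picks $\bar{j}\in\ZZ^r$ with $|\bar{j}-\tfrac{\Delta t}{\Delta x}A_1(t_k,x)|\leq 1$ and sets $\mathrm{y}_1=\mathrm{x}_1+\bar{j}\Delta x$, which is the same "nearest grid point to $\mathrm{x}_1+\Delta t\,A_1(t_k,x)$" device you use (your rounding gives the slightly sharper constant $\Delta x/2$, which is immaterial), and then both arguments conclude via the bound $c_K\Delta x/\Delta t\leq\widehat{C}$ from $\widehat{\Delta}$ and the partition-of-unity property of the $\QQ_1$ basis.
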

\begin{proof} Let $\bar{j}\in\ZZ^r$ be such that 
$$
\left|\bar{j}-\frac{\Delta t}{\Delta x}A_1(t_k, x)\right|\leq 1.
$$
Setting $\mathrm{y}_1= \mathrm{x}_1+\bar{j}\Delta x\in\G_r$, it follows that
$$
|\alpha(k,x, \mathrm{y}_1)|\leq \big|B_1(t_k, x)^{-1}\big|\left| \frac{\mathrm{y}_1-\mathrm{x}_1}{\Delta t}-A_1(t_k, x)\right|\leq c_K\frac{\Delta x}{\Delta t}\left|\bar{j}-\frac{\Delta t}{\Delta x}A_1(t_k, x)\right|\leq \widehat{C},
$$
which implies that $\mathrm{y}_1\in \SS^1_{k+1}(x)$. In addition, since  $\sum_{\mathrm{y}_2 \in\G_{d-r}}\beta_{\mathrm{y}_2 }( \ydos(k,x,\mathrm{y}_1))=1$ we deduce that   $\SS_{k+1}(x)\neq \emptyset$.
\end{proof}

Let us set  
\be
\label{eq:def-Sk}
\SS_{0}=\G\cap K_{0}\quad\text{and}\quad\SS_{k+1}=\bigcup\limits_{x\in\SS_{k}}\SS_{k+1}(x)\quad\mbox{for all }k\in \I^*.
\ee
Notice that \eqref{cotas_inductivas} and Lemma~\ref{the_big_compact} imply that
\be
\label{s_k_in_big_compact}
\SS_{k}\subset K_{k}\subset K\quad\text{for all }k\in\I. 
\ee

Given $F\subseteq\G_{d-r}$ and $\varphi\colon F\to\RR$, define the interpolant $I_{F}[\varphi]\colon \RR^{d-r}\to\RR$ by
$$
I_{F}[\varphi](\mathrm{y}_2)=\sum_{\mathrm{x}_2 \in F}\beta_{\mathrm{x}_2}(\mathrm{y}_2)\varphi(\mathrm{x}_2 )  \quad \text{for all }\mathrm{y}_2\in\RR^{d-r}.
$$

We consider the following scheme to approximate $v(t_k, x)$ for all $k\in \I$ and $x\in \SS_{k}$:
\be
\label{fully_discreto_HJB}
\ba{rcl}
 \mathcal{V}_k(x) &=&\min\limits_{ \mathrm{y}_1\in  \SS^1_{k+1}(x)} \bigg\{\Delta t \ell(t_k, \alpha(k,x, \mathrm{y}_1), x,m(t_k)) +I_{\SS_{k+1}^2(x,\mathrm{y}_1)}[\mathcal{V}_{k+1}(\mathrm{y}_1,\cdot)](\ydos(k,x,\mathrm{y}_1))\bigg\}\\[12pt]
 \; & \; & \hspace{10cm}\text{for all } k\in \I^*,\,x\in \SS_{k}, \\[-6pt]
\mathcal{V}_{N_t}(x)&=& g(x,m(T)) \quad \text{for all }x\in \SS_{N_t}. 
\ea  
\ee
Notice that the previous scheme can be rewritten as 
\be
\label{fully_discrete_HJB}
\ba{l}
\mathcal{V}_k(x) =\min\limits_{p \in\P(\SS^1_{k+1}(x))}\bigg\{\sum\limits_{\mathrm{y}_1 \in  \SS^1_{k+1}(x)} p(\mathrm{y}_1)\Big[\Delta t \ell(t_k, \alpha(k,x, \mathrm{y}_1), x,m(t_k)) \\[5pt]\hspace{5cm}+I_{\SS_{k+1}^2(x,\mathrm{y}_1)}[\mathcal{V}_{k+1}(\mathrm{y}_1,\cdot)]\big(\ydos(k,x,\mathrm{y}_1)\big)\Big]\bigg\}\quad \text{for all }k\in \I^*, \; x\in \SS_{k},  \\[2pt]
\mathcal{V}_{N_t}(x)= g(x,m(T))\quad\text{for all }x\in \SS_{N_t}.
\ea  
\ee

An interesting property of formulation~\eqref{fully_discrete_HJB} is that the optimization problem defining $\mathcal{V}_k(x)$ is a linear programming problem with a nonempty and convex set of solutions.  In the next section, and in the context of Problem \ref{mfg_problem},  we will consider a strictly convex variation of~\eqref{fully_discrete_HJB} ensuring the uniqueness of minimizers. The latter will play an important role in the numerical solution of the resulting scheme for Problem \ref{mfg_problem}.  
\begin{remark}{\rm(i)} Consider three sequences $(N_t^n)_{n\in\NN}\subset\NN$, $(N_s^n)_{n\in\NN}\subset\NN$, and $(m^n)_{n\in\NN}\subset C\left([0,T];\P_1(\cR^d)\right)$  such that, as $n\to\infty$, $N_{t}^{n}\to \infty$, $N_{s}^{n}\to \infty$, $N_{t}^{n}/N_{s}^{n}\to 0$, and $m^n\to m$. Set $\I^n=\{0,\hdots, N_t^n\}$, $\G^n=\{i/N_{s}^{n} \, | \, i\in\ZZ^d\}$, and denote by $v^n$ and $\mathcal{V}^n$ the solutions to \eqref{semidiscrete-scheme-dpp_bola},  with $\Delta t= T/N_{t}^{n}$, and to \eqref{fully_discreto_HJB}, with $\Delta t= T/N_{t}^{n}$ and $\Delta x= 1/N_{s}^{n}$, respectively.  In Lemma~\ref{lem:conv-v-vt} we will show that 
$$
\sup_{x\in \G^n\cap K_0}\left|\mathcal{V}_0^n(x)-v^n_{0}(x)\right|\underset{n\rightarrow \infty}{\longrightarrow} 0
$$
and hence, from Proposition~\ref{prop:conv-value-function}, we have
\be   \label{eq:lim-sup-v-n_fully}
\sup_{x\in \G^n\cap K_0}\left|\mathcal{V}_0^n(x)-v(0,x)\right|\underset{n\rightarrow \infty}{\longrightarrow} 0,
\ee
where $v$ is the unique viscosity solution to \eqref{eq:HJB}.

{\rm(ii)} Notice that, for every $k\in \I^*$ and $x\in K_{k}$, we have
$$
\SS^{1}_{k+1}(x)=  \left[\mathrm{x}_1 + \Delta t A_1(t_k,x) +\Delta t B_1(t_k,x) \ov{\mathrm{B}}(0,\widehat{C})\right]\cap \G_{r}.
$$ 

In the numerical implementation of \eqref{fully_discrete_HJB}, the sets $\{\SS^{1}_{k+1}(x) \; | \; k\in \I^*, \; x \in \SS_{k}\}$ can be difficult to compute when $B(t_k,x)$ does not have a simple structure. These sets can be replaced by larger ones such as 
$$
\widehat{\SS}^{1}_{k+1}(x)= \left\{\mathrm{y}_1\in\G_r\; \big|\; |\mathrm{y}_1-\mathrm{x}_1-\Delta t A_1(t_k, x)|\leq \Delta t |B(t_k, x)|\widehat{C} \right\},
$$
or even 
$$
\widehat{\SS}^{1}_{k+1}(x)=\{ \mathrm{y}_1\in\G_r \, | \; (\mathrm{y}_1, \ydos(k,x,\mathrm{y}_1)) \in K_{k+1}\}.
$$
With these modifications, the results in the following sections hold true and the corresponding proofs are slightly easier. However, we have chosen to present our results with $\SS^{1}_{k+1}(x)$ because it is the smallest set for which our approach allows to approximate the values $\{v(0,x) \; | \; x\in K_0\}$. 
\end{remark}
\section{Discretization of the MFG problem}
\label{sec:main_result}
 We study in this section the existence and the uniqueness of solutions to a full-discretization of Problem \ref{mfg_problem}. At the end of this section, we also discuss the convergence of the  {\it fictitious play} method  \cite{MR4030259} towards the solution of the discrete problem.

The definition of MFG equilibrium involves an optimal control problem solved by a ``typical" player. Thus, in order to find a discrete version of Problem \ref{mfg_problem}, amenable to numerical computation, it is natural to first discretize the aforementioned optimal control problem. For this purpose, we will consider a variation of \eqref{fully_discrete_HJB} with an additional entropy term that ensures the existence of a unique minimizer. 

Let $N_t\in\NN$,  $N_s\in\NN$, and $(\Delta t, \Delta x)\in \widehat{\Delta}$ (see \eqref{conjunto_de_parametros}). Recall that  for any $(k, x)\in \I^*\times \G$, the sets $ \SS^1_{k+1}(x)$, $\SS^2_{k+1}(x,\mathrm{y}_1)$, for $\mathrm{y}_1\in \SS^1_{k+1}(x)$,   and $ \SS_{k+1}(x)$ are defined in \eqref{definicion_de_los_S}. Let us define the family of sets $\{\SS_{k} \, | \, k \in \I\}$ by \eqref{eq:def-Sk} with $K_0=\supp(m_0)$.  In order to discretize the initial distribution $m_0$, let us set 
$$
E(x) =\left\{y\in\cR^d\;|\;|x-y|\leq \Delta x/2\right\} \quad \mbox{for} \; x\in \G.
$$ 
By modifying $\Delta x$, if necessary, we can assume that $m_0(\partial E(x))=0$ for all $x\in\G$. For a finite set $F$, let us define the (nonpositive) entropy function $\mathcal{E}_F\colon \mathcal{P}(F)\to\cR$ by
$$
\mathcal{E}_F(p) =\sum_{x\in F}p(x)\log(p(x)) \quad \text{with the convention that  $p(x)=p(\{x\})$ and $0\log(0)=0$}.
$$
In what follows, we identify every $p\in \P(\SS_k)$ ($k\in \I$) with the probability measure  $\sum_{x\in\SS_k} p(x)\delta_x\in\P_1(\cR^d)$. Let $\eps>0$,  set  $\M=\prod_{k \in \I}\P(\SS_{k})$ and, for $M=(M_k)_{k\in \I}\in \M$,  define $\{V_{k}^{M} \, | \, k\in \I\}$ by 
\be
\label{eq:hjb_eq}
\ba{l}
V_k^{M}(x)=\min\limits_{p\in \mathcal{P}( \SS^1_{k+1}(x))} \bigg\{\sum\limits_{\mathrm{y}_1\in  \SS_{k+1}^1(x)}p(\mathrm{y}_1)\bigg[ \Delta t  \ell(t_k, \alpha(k,x, \mathrm{y}_1), x, M_k) \\[6pt]
\hspace{3.2cm} +I_{\SS_{k+1}^2(x,\mathrm{y}_1)}[V_{k+1}^{M}(\mathrm{y}_1,\cdot)](\ydos(k,x,\mathrm{y}_1))\bigg] +\eps\mathcal{E}_{\SS^1_{k+1}(x)}(p)\bigg\} \quad \text{for all } k\in \I^*,\,x\in \SS_{k}, \\ [8pt]
 V_{N_t}^{M}(x)=g(x, M_{N_t}) \quad \text{for all }x\in \SS_{N_t},
\ea
\ee
where we recall that $\alpha(k,x,\mathrm{y}_1)$ is defined in \eqref{definition_alpha_n} for every $(k,x,\mathrm{y}_1)\in \I^*\times \G\times \G_r$. For $k\in \I^*$, $x\in \SS_k$, and $y\in \SS_{k+1}$, we also set 
\be\label{eq:def-P_k_M}
P_k^{M}(x, y):=  \begin{cases}p_k^{M}(x, \mathrm{y}_1) \beta_{\mathrm{y}_2}(\ydos(k,x,\mathrm{y}_1)) & \mbox{if $y \in \SS_{k+1}(x)$}
,\\[1mm]
0 &  \mbox{if $y \in \SS_{k+1} \setminus \SS_{k+1}(x)$}, 
\end{cases}
\ee
where 
\be\label{eq:p_k_M}
\ba{l}
p_k^{M}(x, \cdot) \in  \argmin\limits_{p\in\mathcal{P}(\SS_{k+1}^1(x))} \bigg\{\sum\limits_{\mathrm{y}_1\in \SS_{k+1}^1(x)}p(\mathrm{y}_1)\bigg[ \Delta t \ell(t_k, \alpha(k,x, \mathrm{y}_1), x, M_k)  \\[10pt]
\hspace{6cm} +I_{\SS_{k+1}^2(x,\mathrm{y}_1)}[V_{k+1}^{M}(\mathrm{y}_1,\cdot)](\ydos(k,x,\mathrm{y}_1))\bigg]+\eps \mathcal{E}_{\SS^1_{k+1}(x)} (p)\bigg\}.
\ea
\ee

Notice that, for every $k\in\I^*$, $x\in\SS_k$, we have $P_{k}^{M}(x,\cdot)\in \P(\SS_{k+1})$ and $\supp(P_{k}^{M}(x,\cdot))=\SS_{k+1}(x)$.  Finally, define the {\it best response mapping} ${\bf br}\colon \M \ni M \mapsto {\bf br}(M)=\widehat{M} \in \M$, where $\widehat{M}$ is given by 
\be
\label{def-br}
\ba{rcl}
\widehat{M}_{k+1}(y)&=&\sum\limits_{x\in\SS_{k}}P_k^{M}(x,y)\widehat{M}_{k}(x)\quad\text{for all }k\in \I^*,\,y\in \SS_{k+1},\\ [13pt]
\hspace{0.34cm}\widehat{M}_0(x)&=&m_0(E(x))\quad\text{for all } x\in \SS_{0}.
\ea
\ee
\begin{remark}\label{rem:prop_br} 
 Notice that the presence of the entropy term in the right-hand side  of \eqref{eq:p_k_M} ensures that $p_{k}^M(x,\cdot)$ is uniquely defined for all $k\in\I^*$, $x\in \SS_k$, and $M\in \M$, and $p_{k}^M(x,\mathrm{y}_1)>0$ for all $\mathrm{y}_1\in \SS_{k+1}^1(x)$. Consequently, for every $\mathrm{y}_1\in \SS_{k+1}^{1}(x)$,  the application $\M\ni M \mapsto p_{k}^{M}(x,\mathrm{y}_1)\in [0,1]$ is continuous. Therefore, the best response mapping ${\bf br}$ is well-defined and continuous. 
 
Moreover, given $M\in \M$, setting $\widehat{M}={\bf br}(M)$, \eqref{def-br} and \eqref{eq:def-P_k_M} imply
$$
\widehat{M}_k(x)>0\quad \text{for all }k\in\I,\,x\in\SS_k.
$$
\end{remark} 

The discretization of Problem \ref{mfg_problem} that we consider in this work is the following. 
\begin{problem}
\label{MFG-SL_discrete_scheme_n-degenerate}  
Find $M\in \M$ such that $M={\bf br}(M)$.
\end{problem}

The following proposition deals with the existence of solutions to Problem \ref{MFG-SL_discrete_scheme_n-degenerate}.
\begin{proposition}
\label{existencia_caso_finito} 
Problem \ref{MFG-SL_discrete_scheme_n-degenerate} admits at least one solution $M^*\in \M$. 
\end{proposition}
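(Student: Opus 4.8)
The plan is to establish existence of a fixed point of the best response mapping $\mathbf{br}\colon \M\to\M$ via Brouwer's fixed point theorem. The set $\M=\prod_{k\in\I}\P(\SS_k)$ is a finite product of probability simplices over the finite sets $\SS_k$, hence it is a nonempty, compact, and convex subset of a finite-dimensional Euclidean space. Since Remark~\ref{rem:prop_br} already establishes that $\mathbf{br}$ is well-defined and continuous on $\M$, and that $\mathbf{br}(\M)\subseteq\M$, Brouwer's theorem applies directly and yields a point $M^*\in\M$ with $M^*=\mathbf{br}(M^*)$, which is precisely a solution to Problem~\ref{MFG-SL_discrete_scheme_n-degenerate}.

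The verification splits into three routine checks. First I would confirm that $\M$ is convex and compact: each $\P(\SS_k)$ is the standard simplex in $\RR^{|\SS_k|}$, which is convex and compact, and a finite product of such sets inherits both properties. Second, I would confirm that $\mathbf{br}$ maps $\M$ into itself. Given $M\in\M$ and $\widehat M=\mathbf{br}(M)$, formula~\eqref{def-br} defines $\widehat M_0(x)=m_0(E(x))$, and since the sets $\{E(x)\}_{x\in\SS_0}$ partition a neighborhood of $\supp(m_0)=K_0$ up to the boundary null sets (recall we chose $\Delta x$ so that $m_0(\partial E(x))=0$), we have $\sum_{x\in\SS_0}\widehat M_0(x)=1$, so $\widehat M_0\in\P(\SS_0)$. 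Inductively, if $\widehat M_k\in\P(\SS_k)$, then because each $P_k^M(x,\cdot)\in\P(\SS_{k+1})$ (as noted before Problem~\ref{MFG-SL_discrete_scheme_n-degenerate}), the quantity $\widehat M_{k+1}(y)=\sum_{x\in\SS_k}P_k^M(x,y)\widehat M_k(x)$ is nonnegative and sums to $\sum_{x\in\SS_k}\widehat M_k(x)=1$ over $y\in\SS_{k+1}$ by interchanging the order of summation. Hence $\widehat M\in\M$.

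Third, I would invoke continuity of $\mathbf{br}$, which is the only delicate point and is precisely where the entropy regularization pays off. For a generic linear programming problem such as the one defining $\mathcal{V}_k$ in~\eqref{fully_discrete_HJB}, the argmin is set-valued and the selection need not be continuous in the data $M$; the strictly convex entropy term $\eps\mathcal{E}_{\SS^1_{k+1}(x)}$ in~\eqref{eq:hjb_eq} makes the minimizer $p_k^M(x,\cdot)$ in~\eqref{eq:p_k_M} unique and continuous in $M$, as asserted in Remark~\ref{rem:prop_br}. Propagating this continuity through the transition kernel $P_k^M$ in~\eqref{eq:def-P_k_M}—where the factors $\beta_{\mathrm{y}_2}(\ydos(k,x,\mathrm{y}_1))$ do not depend on $M$—and then through the finite recursion~\eqref{def-br} (a finite composition and sum of continuous maps, with $\widehat M_0$ independent of $M$), shows that $M\mapsto\widehat M$ is continuous on $\M$. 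With these three facts in hand, Brouwer's fixed point theorem concludes the proof. The main obstacle, already handled by the entropy term, is this continuity of the best response; everything else is a bookkeeping verification that probability masses are preserved along the recursion.
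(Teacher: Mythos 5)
Your proposal is correct and follows exactly the same route as the paper: the paper's proof is a one-line application of Brouwer's fixed point theorem using the non-emptiness, convexity, and compactness of $\M$ together with the continuity of ${\bf br}$ established in Remark~\ref{rem:prop_br}. Your additional verifications (mass preservation along the recursion, the role of the entropy term in securing continuity) are accurate elaborations of what the paper leaves implicit.
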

\begin{proof} Since $\M$ is non-empty, convex and compact, and ${\bf br}$ is continuous (see Remark \ref{rem:prop_br}), the result follows from Brouwer's fixed point theorem. 
\end{proof}
\begin{remark}\label{compact_support_uniform}  Since $\supp(m_0)$ is compact,  \eqref{s_k_in_big_compact}  implies the existence of $C_{\infty}>0$, independent of $(\Delta t,\Delta x)\in \widehat{\Delta}$  and $\eps >  0$,  such that $\SS_k \subseteq \ov{\mathrm{B}}(0,C_{\infty})$ for all $k\in \I$. In particular,  for any solution $M^*\in \M$ to Problem~\ref{MFG-SL_discrete_scheme_n-degenerate},  the support of $M_{k}^*$ {\rm(}$k\in \I${\rm)} is contained in $\ov{\mathrm{B}}(0,C_{\infty})$.
\end{remark} 

We now discuss the uniqueness of solutions to Problem~\ref{MFG-SL_discrete_scheme_n-degenerate}.
\begin{proposition}
\label{uniqueness_mfg_discreto}   Under {\bf(H5)}, Problem~\ref{MFG-SL_discrete_scheme_n-degenerate} admits a unique solution. 
\end{proposition}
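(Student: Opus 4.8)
The plan is to run the discrete counterpart of the Lasry--Lions uniqueness argument, using the strict convexity introduced by the entropy term to compensate for the fact that \eqref{monotonia_Phi} is assumed only in non-strict form. Suppose $M^{1},M^{2}\in\M$ both solve Problem~\ref{MFG-SL_discrete_scheme_n-degenerate}, and write $V^{i}=V^{M^{i}}$ and $p^{i}=(p^{M^{i}}_{k})_{k\in\I^{*}}$ for the associated value functions \eqref{eq:hjb_eq} and optimal policies \eqref{eq:p_k_M}; by Remark~\ref{rem:prop_br} each $p^{i}$ is uniquely determined. For a family $p=(p_{k})_{k\in\I^{*}}$ with $p_{k}(x,\cdot)\in\P(\SS^{1}_{k+1}(x))$, let $\mu^{p}=(\mu^{p}_{k})_{k\in\I}$ be the flow generated from $\mu^{p}_{0}(x)=m_{0}(E(x))$ through the kernel $P_{k}(x,y)=p_{k}(x,\mathrm{y}_1)\beta_{\mathrm{y}_2}(\ydos(k,x,\mathrm{y}_1))$ as in \eqref{eq:def-P_k_M}--\eqref{def-br}, and define the total cost
\[
J(p,M)=\sum_{k=0}^{N_{t}-1}\sum_{x\in\SS_{k}}\mu^{p}_{k}(x)\Big[\Delta t\!\!\sum_{\mathrm{y}_1\in\SS^{1}_{k+1}(x)}\!\!p_{k}(x,\mathrm{y}_1)\,\ell(t_{k},\alpha(k,x,\mathrm{y}_1),x,M_{k})+\eps\,\mathcal{E}_{\SS^{1}_{k+1}(x)}(p_{k}(x,\cdot))\Big]+\sum_{x\in\SS_{N_{t}}}\mu^{p}_{N_{t}}(x)\,g(x,M_{N_{t}}).
\]
A telescoping of \eqref{eq:hjb_eq} against the flow $\mu^{p}$ yields the verification identity $J(p,M)-\sum_{x\in\SS_{0}}V^{M}_{0}(x)m_{0}(E(x))=\sum_{k=0}^{N_{t}-1}\sum_{x}\mu^{p}_{k}(x)\,\delta^{p,M}_{k}(x)$, where $\delta^{p,M}_{k}(x)\ge 0$ is the per-stage optimality gap in \eqref{eq:hjb_eq} and, since the minimizer there is unique, $\delta^{p,M}_{k}(x)=0$ if and only if $p_{k}(x,\cdot)=p^{M}_{k}(x,\cdot)$. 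In particular $J(\cdot,M)$ is minimized exactly by $p^{M}$, and the fixed-point property gives $\mu^{p^{i}}=M^{i}$ and $J(p^{i},M^{i})=\sum_{x}V^{i}_{0}(x)m_{0}(E(x))$.

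Next I would exploit optimality across the two equilibria together with the splitting $\ell=\ell_{0}+f$ from {\bf(H5)}{\rm(i)}. Since $\ell_{0}$, the maps $\alpha(k,x,\cdot)$ and $\ydos$, the weights $\beta$, and the entropy are all independent of the measure argument, for any fixed family $p$ the difference $J(p,M^{2})-J(p,M^{1})$ involves only $f$ and $g$:
\[
J(p,M^{2})-J(p,M^{1})=\sum_{k=0}^{N_{t}-1}\Delta t\sum_{x}\mu^{p}_{k}(x)\big[f(t_{k},x,M^{2}_{k})-f(t_{k},x,M^{1}_{k})\big]+\sum_{x}\mu^{p}_{N_{t}}(x)\big[g(x,M^{2}_{N_{t}})-g(x,M^{1}_{N_{t}})\big].
\]
Considering the two differences $J(p^{1},M^{2})-J(p^{1},M^{1})$ and $J(p^{2},M^{1})-J(p^{2},M^{2})$, each of which depends only on $f,g$ by the identity above, and using $\mu^{p^{i}}=M^{i}$, their sum equals
\[
-\sum_{k=0}^{N_{t}-1}\Delta t\sum_{x}(M^{1}_{k}-M^{2}_{k})(x)\big[f(t_{k},x,M^{1}_{k})-f(t_{k},x,M^{2}_{k})\big]-\sum_{x}(M^{1}_{N_{t}}-M^{2}_{N_{t}})(x)\big[g(x,M^{1}_{N_{t}})-g(x,M^{2}_{N_{t}})\big],
\]
which is $\le 0$ because every bracket is $\ge 0$ by \eqref{monotonia_Phi} under {\bf(H5)}{\rm(ii)}. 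On the other hand, optimality of $p^{2}$ and $p^{1}$ for the problems associated with $M^{2}$ and $M^{1}$ gives $J(p^{1},M^{2})\ge J(p^{2},M^{2})$ and $J(p^{2},M^{1})\ge J(p^{1},M^{1})$, so the same sum is $\ge 0$. Therefore all brackets vanish and both inequalities are equalities; in particular $J(p^{1},M^{2})=J(p^{2},M^{2})$, i.e. $p^{1}$ is optimal for the control problem associated with $M^{2}$.

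Finally I would conclude that the flows coincide. Applying the verification identity of the first step with $M=M^{2}$ and $p=p^{1}$, the equality $J(p^{1},M^{2})=\sum_{x}V^{2}_{0}(x)m_{0}(E(x))$ forces $\sum_{k}\sum_{x}M^{1}_{k}(x)\,\delta^{p^{1},M^{2}}_{k}(x)=0$ (using $\mu^{p^{1}}=M^{1}$), whence $\delta^{p^{1},M^{2}}_{k}(x)=0$, and therefore $p^{1}_{k}(x,\cdot)=p^{2}_{k}(x,\cdot)$, at every $(k,x)$ with $M^{1}_{k}(x)>0$. A forward induction then closes the argument: $M^{1}_{0}=M^{2}_{0}$ since both equal $m_{0}(E(\cdot))$, and if $M^{1}_{k}=M^{2}_{k}$ then the policies agree on their common support, so the kernels $P^{M^{1}}_{k}$ and $P^{M^{2}}_{k}$ agree there (the $\beta$-part being measure-independent), giving $M^{1}_{k+1}=M^{2}_{k+1}$. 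I expect the main obstacle to be precisely this last upgrade: because {\bf(H5)}{\rm(ii)} provides only non-strict monotonicity, the vanishing of the brackets does not by itself yield $M^{1}_{k}=M^{2}_{k}$, and one must route the information through the \emph{uniqueness} of the entropy-regularized minimizer in \eqref{eq:p_k_M} to identify the two optimal policies along the generated flow. Some care is also needed to justify the telescoping/verification identity and the cancellation of all measure-independent terms in $J(p,M^{2})-J(p,M^{1})$.
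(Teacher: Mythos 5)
Your argument is correct and is essentially the paper's own proof: the same cross-evaluation of the total costs at the two equilibria, cancellation of the measure-independent terms ($\ell_0$, dynamics, entropy) via {\bf(H5)}{\rm(i)}, and summation of the two resulting inequalities against the monotonicity of $f$ and $g$, with the strict convexity of the entropy supplying the decisive rigidity. The only difference is organizational: the paper argues by contradiction through strict inequalities, whereas you derive equality everywhere and then recover $M^{1}=M^{2}$ by propagating the uniqueness of the entropy-regularized minimizer along the flow --- which is precisely the content the paper compresses into the assertion that inequality \eqref{in:m_bar_m} is an equality if and only if $\bar{M}=M$.
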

\begin{proof}
Suppose that $M$ and $\bar{M}$ solve Problem \ref{MFG-SL_discrete_scheme_n-degenerate}. By \eqref{eq:hjb_eq}  we have 
$$
\ba{lll}
V_0^M(x)&\leq &\Delta t\sum\limits_{\mathrm{y}_1\in \SS^1_{1}(x)}p^{\bar{M}}_0(x,\mathrm{y}_1)\big[\ell(t_0, \alpha(0,x,\mathrm{y}_1), x,M_0) +I_{\SS^2_{1}(x,\mathrm{y}_1)}[V_1^M(\mathrm{y}_1,\cdot)](\ydos(0,x,\mathrm{y}_1))\big]\\[12pt]
\; & \; &\quad  \quad +\eps\mathcal{E}_{\SS^1_{k+1}(x)} (p_0^{\bar{M}}(x,\cdot))\\[6pt]
\;&=&\Delta t\sum\limits_{y\in \SS_{1}(x)}P^{\bar{M}}_0(x,y)\left(\ell(t_0, \alpha(0,x,\mathrm{y}_1), x,M_0)+V_1^M(y)\right)+\eps\mathcal{E}_{\SS^1_{k+1}(x)}(p_0^{\bar{M}}(x,\cdot)).
\ea
$$
 By multiplying by $\bar{M}_0(x)$ and summing over $x\in\SS_0$, \eqref{eq:def-Sk}, \eqref{eq:def-P_k_M}, and \eqref{def-br} yield 
$$
\ba{lll}
\sum\limits_{x\in\SS_0}\bar{M}_0(x)V_0^M(x)&\leq &\Delta t\sum\limits_{x\in\SS_0}\sum\limits_{y\in \SS_{1}}\bar{M}_0(x)P^{\bar{M}}_0(x,y)\ell(t_0, \alpha(0,x,\mathrm{y}_1), x,M_0)+\sum\limits_{y\in \SS_{1}}\bar{M}_1(y)V_1^M(y)\\[12pt]
\;&\;&\quad+\eps\sum\limits_{x\in\SS_0} \bar{M}_0(x) \mathcal{E}_{\SS^1_{k+1}(x)}(p_0^{\bar{M}}(x,\cdot)).
\ea
$$
Applying the same argument to $V_1^M(y)$ and so forth,  we obtain
\be\label{in:m_bar_m}
\ba{lll}
\sum\limits_{x\in\SS_0}\bar{M}_0(x)V_0^M(x)&\leq&\Delta t\sum\limits_{k=0}^{N_t-1}\sum\limits_{x\in\SS_k}\sum\limits_{y\in \SS_{k+1}}\bar{M}_k(x)P_k^{\bar{M}}(x,y)\ell(t_k, \alpha(k,x,\mathrm{y}_1),x,M_k)\\[8pt]
\;&\;&+\sum\limits_{y\in\SS_{N_t}} \bar{M}_{N_t}(y)V^M_{N_t}(y) +\eps\sum\limits_{k=0}^{N_t-1}\sum\limits_{x\in\SS_k}\bar{M}_k(x)\mathcal{E}_{\SS^1_{k+1}(x)}(p_k^{\bar{M}}(x,\cdot)).
\ea
\ee
Notice that Remark~\ref{rem:prop_br}  implies that the inequality above is an equality if and only if $\bar{M}=M$. Thus, it holds that 
\be\label{eq:aux_uniqueness_3}
\ba{lll}
\sum\limits_{x\in\SS_0}M_0(x)V_0^M(x)&=&\Delta t\sum\limits_{k=0}^{N_t-1}\sum\limits_{x\in\SS_k}\sum\limits_{y\in \SS_{k+1}}M_k(x)P_k^{M}(x,y)\ell(t_k, \alpha(k,x,\mathrm{y}_1),x,M_k)
\\[8pt]
\;&\;&+\sum\limits_{y\in\SS_{N_t}} M_{N_t}(y)V^M_{N_t}(y)+\eps\sum\limits_{k=0}^{N_t-1}\sum\limits_{x\in\SS_k}M_k(x)\mathcal{E}_{\SS^1_{k+1}(x)}(p_k^{M}(x,\cdot)).
\ea
\ee
Now, let us suppose that  $M\neq \bar{M}$. By interchanging the roles of $M$ and $\bar{M}$ in \eqref{in:m_bar_m}, we obtain
\be\label{eq:aux_uniqueness_4}
\ba{lll}
\sum\limits_{x\in\SS_0}M_0(x)V_0^{\bar{M}}(x)&<&\Delta t\sum\limits_{k=0}^{N_t-1}\sum\limits_{x\in\SS_k}\sum\limits_{y\in \SS_{k+1}}M_k(x)P_k^M(x,y)\ell(t_k, \alpha(k,x,\mathrm{y}_1),x,{\bar{M}}_k)\\[8pt]
\;&\;&+\sum\limits_{y\in\SS_{N_t}} M_{N_t}(y)V^{\bar{M}}_{N_t}(y)+\eps\sum\limits_{k=0}^{N_t-1}\sum\limits_{x\in\SS_k}M_k(x)\mathcal{E}_{\SS^1_{k+1}(x)}(p_k^M(x,\cdot)).
\ea
\ee
By {\bf(H5)}{\rm(i)}, \eqref{eq:aux_uniqueness_3}, and \eqref{eq:aux_uniqueness_4}, we get 
\be\label{eq:aux_uniqueness_1}
\ba{lll}
\sum\limits_{x\in\SS_0}M_0(x)\left(V_0^{\bar{M}}(x)-V_0^M(x)\right)&<&\Delta t\sum\limits_{k=0}^{N_t-1}\sum\limits_{x\in\SS_k}M_k(x)\left(f(x,{\bar{M}}_k)-f(x,M_k(x))\right)\\[8pt]
\;&\;&\quad+\sum\limits_{y\in\SS_{N_t}} M_{N_t}(y)\left(V^{\bar{M}}_{N_t}(y)-V_{N_t}^M(y)\right).
\ea
\ee
Similarly, recalling that $M_0=\bar{M}_0$, we have that 
\be\label{eq:aux_uniqueness_2}
\ba{lll}
\sum\limits_{x\in\SS_0}M_0(x)\left(V_0^{M}(x)-V_0^{\bar{M}}(x)\right)&<&\Delta t\sum\limits_{k=0}^{N_t-1}\sum\limits_{x\in\SS_k}\bar{M}_k(x)\left(f(x, M_k)-f(x,\bar{M}_k(x))\right)\\[8pt]
\;&\;&\quad+\sum\limits_{y\in\SS_{N_t}} \bar{M}_{N_t}(y)\left(V^{M}_{N_t}(y)-V_{N_t}^{\bar{M}}(y)\right).
\ea
\ee
Noting that $V^{M}_{N_t}(y)=g(y, M_{N_{t}})$ and $V^{\bar{M}}_{N_t}(y)=g(y, \bar{M}_{N_{t}})$, by adding \eqref{eq:aux_uniqueness_1} and \eqref{eq:aux_uniqueness_2},  we obtain  
$$\ba{rcl}
0 &<& \Delta t\sum\limits_{k=0}^{N_t-1}\sum\limits_{x\in\SS_k}\left(M_k(x)-\bar{M}_k(x)\right)\left(f(x,{\bar{M}}_k)-f(x,M_k(x))\right)\\[8pt]
\; & \; & +\sum\limits_{y\in\SS_{N_t}} \left(M_{N_t}(y)-\bar{M}_{N_t}(y)\right)\left(g(y,{\bar{M}}_{N_t})-g(y, M_{N_t})\right),\ea
$$
which contradicts {\bf(H5)}(ii). 
\end{proof}

Besides the uniqueness of solutions to Problem~\ref{MFG-SL_discrete_scheme_n-degenerate}, the particular structure of $\ell$  and the monotonicity assumptions on $f$ and $g$ in {\bf(H5)} are important in order to approximate the unique solution $M^*$ to Problem~\ref{MFG-SL_discrete_scheme_n-degenerate}. Indeed, if {\bf(H5)} holds and $f$ and $g$ satisfy 
\begin{eqnarray} 
|f(t,x,\mu_1)-f(t,x,\mu_2)|&\leq &C_{f} d_{1}(\mu_1,\mu_2) \quad \text{for all } t\in [0,T],\,x\in \RR^d,\,\mu_1,\,\mu_2\in\P_1(\RR^d), \label{Lipschitz_property_f_w_r_t_mu}\\
|g(x,\mu_1)-g(x,\mu_2)|&\leq &C_{g} d_{1}(\mu_1,\mu_2)\quad\text{for all }x\in \RR^d,\,\mu_1,\,\mu_2\in \P_1(\RR^d),\label{Lipschitz_property_g_w_r_t_mu}
\end{eqnarray}
and we consider the {\it fictitious play} sequence 
$$
\ov{\mathsf{M}}^0\in \M \; \; \text{arbitrary}, \;  \quad (\forall\,n\geq 1) \quad \mathsf{M}^{n+1}= {\bf br}(\ov{\mathsf{M}}^{n}), \quad \ov{\mathsf{M}}^{n+1}= \frac{n}{n+1}\ov{\mathsf{M}}^{n} + \frac{1}{n+1}\mathsf{M}^{n+1}, 
$$
then \cite[Theorem~3.2]{MR4030259} implies that 
$$(\mathsf{M}^{n}, \ov{\mathsf{M}}^{n}) \underset{n\to\infty}{\longrightarrow} (M^*, M^*).
$$

\section{Convergence  of solutions to the finite MFG problem towards a solution to  Problem \ref{mfg_problem}}
\label{convergence_result} 
Let $(N_t^n)_{n\in\NN}\subset\NN$, $(N_s^n)_{n\in\NN}\subset\NN$, $(\eps_n)_{n\in\NN}\subset (0,\infty)$,  and, for every $n\in\NN$, set $\Delta t_n= T/N_{t}^{n}$, $\Delta x_n= 1/N_{s}^{n}$, $\I^n=\{0,\hdots, N_t^n\}$, $\I^{n,*}:=\I^n\setminus \{N_{t}^{n}\}$,  $t^n_k=k\Delta t_n\;(k\in \I^n)$, and $\G^n=\{i\Delta x_n \, | \, i\in\ZZ^d\}$.  We assume that $(\Delta t_n, \Delta x_n)\in \widehat{\Delta}$, where we recall that $\widehat{\Delta}$ is defined in \eqref{conjunto_de_parametros}. For $k\in \I^{n,*}$ and $x\in \G^n$, we denote by $\SS_{k+1}^{1,n}(x)$, $\SS^{2,n}_{k+1}(x,\mathrm{y}_1)$, for $\mathrm{y}_1\in \SS^{1,n}_{k+1}(x)$,  and $\SS_{k+1}^n(x)$ the sets defined in \eqref{definicion_de_los_S} associated with $\Delta t_n$ and $\Delta x_n$. For $k\in \I^n$, the set $\SS^n_{k}$ is defined as in \eqref{eq:def-Sk}.   Recall that Proposition~\ref{existencia_caso_finito} ensures the existence of at least one solution $M^n\in \M^n:=\prod_{k \in \I^n}\P(\SS^n_{k})$ to Problem~\ref{MFG-SL_discrete_scheme_n-degenerate} associated with the previous parameters. Denote by $\Gamma^n$ the set of continuous functions $\gamma\colon [0,T]\to \cR^d$ such that for each $k \in \I^n$,   $\gamma(t^n_k)\in \SS_{k}^n$ and, for every $k\in\I^{n,*}$,  the restriction of $\gamma$ to the interval $[t^n_{k}, t^n_{k+1}] $ is affine. Recalling \eqref{eq:def-P_k_M}, let us define $\xi^n\in \P(\Gamma)$ as
\be 
\label{eq:def-xi-n}
 \xi^n= \sum_{\gamma \in \Gamma^{n}} M_0^n(\gamma(0))P^n(\gamma)\delta_{\gamma} \in \P(\Gamma), \quad \text{where} \quad P^{n}(\gamma):=  \prod_{k=0}^{N^n_t-1}P^{M^n}_k(\gamma(t^n_k), \gamma(t^n_{k+1})). 
\ee

Since, for every  $k\in \I^n$, $M^n_k=e_{t^n_k}\sharp \xi^n$,  it is natural to to extend $M^n$ to $[0,T]$ as 
\be
\label{rem:def-Mn-cont}
[0,T] \ni t \mapsto M^n(t):=e_t\sharp \xi^n \in  \P_1(\RR^d). 
\ee
It is easy to check  {\rm(}see Lemma~\ref{prop:compactness} below{\rm)}  that 
 $M^n\in C([0,T];\P_1(\RR^d))$. Finally, we denote by $\{V^n_k\;|\; k\in\I^n\}$ the solution to  \eqref{eq:hjb_eq} with $M=M^n$.

\begin{remark}
\label{rem:cota-inf-supp-xi}
{\rm(i)} Notice that \eqref{eq:def-P_k_M} implies that $\gamma \in \supp(\xi^n)$ if and only if for every $k\in \I^{n,*}$, we have  $\gamma(t_{k}^{n}) \in \SS_{k}^{n}$ and $\gamma(t_{k+1}^{n})\in \SS_{k+1}^{n}(\gamma(t_{k}^{n}))$. Moreover, if $\gamma \in \supp(\xi^n)$, then $\gamma$ is absolutely continuous with 
\be\label{expresion_Derivatives}
\dot{\gamma}(t)=  \frac{\gamma(t_{k+1}^n)-\gamma(t_{k}^n) }{\Delta t_n} \quad \text{for all }k\in \I^{n,*},\,t\in(t_{k}^n, t_{k+1}^n).
\ee
\smallskip\\
{\rm(ii)} By definition of $\xi^n$ we have $ \supp(\xi^n)\subset \Gamma^n\subseteq W^{1,\infty}([0,T];\RR^d)$. Thus, the definition of $\Gamma^n$ and  Remark~\ref{compact_support_uniform} imply that
$$
\|\gamma \|_{\infty} \leq C_{\infty}\quad\text{for all }\gamma\in \supp(\xi^n).
$$
Moreover, Lemma~\ref{lem:deriv-acotada-sup-xi-n}  below shows the existence of a constant $D_{\infty}>0$ such that 
$$
\|\dot{\gamma} \|_{\infty}\leq D_{\infty}\quad\text{for all }\gamma\in \supp(\xi^n).
$$
\end{remark}

We can now state the main result of this paper. 
\begin{theorem}   
\label{main_result}
 Assume that {\bf(H1)-(H4)} hold and that, as $n\to \infty$, $N_t^{n} \to \infty$, $N^n_s \to \infty$, $   N^n_t/N^n_s \to 0$, and $\eps_n=o\left( 1/(N^n_t \log(N^n_s)) \right)$. Then the following hold:
\begin{enumerate}[{\rm(i)}]
\item  There exists at least one accumulation point $\xi^*$ of $(\xi^n)_{n\in\NN}$, with respect to the narrow topology in $\P(\Gamma)$, and every such accumulation point solves Problem \ref{mfg_problem}.

\item  Let $ (\xi^{n_{l}})_{l\in\NN}$ be a  subsequence of  $\left(\xi^{n}\right)_{n\in\NN}$ converging towards a MFG equilibrium  $\xi^*\in \P(\Gamma)$ and set $[0,T] \ni t \mapsto m^*(t)= e_t \sharp \xi^* \in \P(\RR^d)$.  Then   $m^*\in C([0,T]; \P_1(\RR^d))$, $\supp(m^*(t))\subset \ov{\mathrm{B}}(0,C_{\infty})$ for all $t\in [0,T]$,   $M^{n_l}\to m^*$ in $C([0,T]; \P_1(\RR^d))$ as $l\to \infty$, and 
\be
\label{convergencia_value_function}
\sup \left\{ \left|V_{k}^{n_l}(x) - v^*(t_k^{n_l},x) \right| \; \big| \; k\in \I^{n_l}, \;  x\in \SS_k^{n_l}\right\}\underset{l\to \infty}{\longrightarrow} 0,
\ee
where $v^*$ is the value function \eqref{value_function_dependent_on_m} associated with $m^*$.
\end{enumerate}
\end{theorem}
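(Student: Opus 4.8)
The plan is to extract a narrowly convergent subsequence and then show that its limit is a Lagrangian equilibrium through a variational characterization. By Remark~\ref{rem:cota-inf-supp-xi}, every $\gamma\in\supp(\xi^n)$ satisfies $\|\gamma\|_\infty\le C_\infty$ and $\|\dot\gamma\|_\infty\le D_\infty$, so by Arzel\`a--Ascoli the set $\bigcup_n\supp(\xi^n)$ is relatively compact in $\Gamma$; hence, as recalled in the Preliminaries ($\P(\mathcal K)$ is compact for $\mathcal K\subset\Gamma$ compact), $(\xi^n)$ is pre-compact for the narrow topology and accumulation points exist. Fix an arbitrary accumulation point $\xi^*$ and a subsequence $\xi^{n_l}\to\xi^*$, and set $m^*(t)=e_t\sharp\xi^*$, $M^{n}(t)=e_t\sharp\xi^n$. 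Since each $e_t$ is continuous, narrow convergence gives $M^{n_l}(t)\to m^*(t)$ for every $t$; because the uniform derivative bound $D_\infty$ makes $t\mapsto M^{n}(t)$ and $t\mapsto m^*(t)$ uniformly Lipschitz for $d_1$, this upgrades to convergence in $C([0,T];\P_1(\RR^d))$, yielding continuity of $m^*$ and, with Remark~\ref{compact_support_uniform}, the support bound $\supp(m^*(t))\subset\ov{\mathrm{B}}(0,C_\infty)$ and thus the first assertions of (ii). Using $M^{n_l}\to m^*$ uniformly together with Proposition~\ref{prop:conv-value-function}, Lemma~\ref{lem:conv-v-vt} and \eqref{eq:lim-sup-v-n_fully}, and controlling the entropy correction separating \eqref{eq:hjb_eq} from the unregularized scheme, I would then establish \eqref{convergencia_value_function}; this is where the scaling $\eps_n=o(1/(N_t^n\log N_s^n))$ enters, since each $\SS^{1,n}_{k+1}(x)$ has $O\big((N_s^n/N_t^n)^r\big)$ points, so the per-step entropy is $O(\log N_s^n)$ and the total contribution is $O(\eps_n N_t^n\log N_s^n)\to0$.

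The heart of the argument is a fundamental identity. Exactly as in the derivation of \eqref{eq:aux_uniqueness_3}, testing the scheme \eqref{eq:hjb_eq} against the equilibrium flow $M^n$ and telescoping in $k$ gives
\[
\sum_{x\in\SS_0^n}M_0^n(x)V_0^n(x)=\int_\Gamma J^n(\gamma)\,\dd\xi^n(\gamma)+\eps_n\sum_{k,x}M_k^n(x)\,\mathcal{E}_{\SS^1_{k+1}(x)}\big(p_k^{M^n}(x,\cdot)\big),
\]
where $J^n$ denotes the discrete cost along a trajectory, with controls recovered from \eqref{definition_alpha_n} and terminal term $g(\gamma(T),M^n(T))$. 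As $l\to\infty$ the left-hand side converges to $\int_{\RR^d}v^*(0,x)\,\dd m_0(x)$ (by \eqref{convergencia_value_function}, the Lipschitz bound \eqref{lem:Lips-v-semidis}, and $M_0^{n_l}\to m_0$), while the entropy term vanishes by the scaling hypothesis; hence $\int_\Gamma J^{n_l}\,\dd\xi^{n_l}\to\int_{\RR^d}v^*(0,x)\,\dd m_0(x)$.

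Next I would pass the cost to the limit by lower semicontinuity. Writing $J^{m^*}(\gamma)$ for the continuous cost in \eqref{oc_problem} with $m=m^*$, extended by $+\infty$ to non-admissible $\gamma$, the goal is
\[
\liminf_{l\to\infty}\int_\Gamma J^{n_l}(\gamma)\,\dd\xi^{n_l}(\gamma)\ \ge\ \int_\Gamma J^{m^*}(\gamma)\,\dd\xi^*(\gamma).
\]
I would first replace $M^{n_l}$ by $m^*$ in $J^{n_l}$ using $M^{n_l}\to m^*$ uniformly and continuity of $\ell,g$ in the measure argument, then absorb the time-, state- and interpolation-discretization errors via the Lipschitz bounds {\bf(H1)}--{\bf(H3)} and the scaling $\Delta x_n/\Delta t_n=T^{-1}N_t^n/N_s^n\to0$: by Remark~\ref{rem:cota-inf-supp-xi}, \eqref{definition_alpha_n}, \eqref{definicion_de_los_S} and \eqref{support_beta_x}, each piecewise-affine $\gamma\in\supp(\xi^n)$ satisfies the continuous dynamics exactly in the $\mathrm{x}_1$-block and up to an $O(\Delta x_n/\Delta t_n)$ defect in the $\mathrm{x}_2$-block, with controls uniformly bounded by $\widehat C$. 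Finally I would invoke the convexity of $\ell$ in its control argument ({\bf(H1)}{\rm(iii)}) to obtain lower semicontinuity under the weak-$L^p$ convergence of these bounded controls, the invertibility of $B_1$ ({\bf(H3)}{\rm(iii)}) ensuring that the weak limit is again an admissible control for the limit trajectory. Combining this liminf inequality with the limit of the identity yields $\int_\Gamma J^{m^*}\,\dd\xi^*\le\int_{\RR^d}v^*(0,x)\,\dd m_0(x)$.

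It remains to close the loop. Since $e_0\sharp\xi^{n_l}=M_0^{n_l}\to m_0$ narrowly, the limit satisfies $e_0\sharp\xi^*=m_0$, so $\xi^*\in\P_{m_0}(\Gamma)$; moreover $J^{m^*}(\gamma)\ge v^*(0,\gamma(0))$ for every admissible $\gamma$ (by definition \eqref{value_function_dependent_on_m} of $v^*$) and trivially otherwise, whence $\int_\Gamma J^{m^*}\,\dd\xi^*\ge\int_\Gamma v^*(0,\gamma(0))\,\dd\xi^*(\gamma)=\int_{\RR^d}v^*(0,x)\,\dd m_0(x)$. The two inequalities force equality, and since $J^{m^*}(\gamma)\ge v^*(0,\gamma(0))$ pointwise this is possible only if $J^{m^*}(\gamma)=v^*(0,\gamma(0))$ for $\xi^*$-a.e. $\gamma$; in particular $\xi^*$-a.e. $\gamma$ is admissible and optimal for \eqref{oc_problem} with $x=\gamma(0)$ and $m=m^*$, which is precisely Problem~\ref{mfg_problem}. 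As the argument applies to any accumulation point, (i) follows, and the ingredients assembled in the first paragraph give (ii). I expect the main obstacle to be the lower-semicontinuity inequality of the third paragraph: it must simultaneously replace the discrete coupling by $m^*$, control the $\mathrm{x}_2$-interpolation defect so that the limit trajectories are genuinely admissible, and exploit the convexity of $\ell$ to pass the running cost through the merely weak convergence of the recovered controls.
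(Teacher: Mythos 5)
Your proposal is correct and follows essentially the same route as the paper's proof: compactness of $(\xi^n)$ from the uniform bounds on trajectories and derivatives, the telescoped identity expressing $\sum_x M_0^n(x)V_0^n(x)$ as the expected discrete cost plus a vanishing entropy term, convergence of the left-hand side to $\int_{\RR^d}v^*(0,x)\,\dd m_0(x)$ via Proposition~\ref{prop:conv-value-function} and Lemma~\ref{lem:conv-v-vt}, a lower-semicontinuity passage to the limit cost exploiting convexity of $\ell$ in the control (the paper formalizes this with \cite[Corollary 3.24]{dacorogna89} for the path functional and \cite[Lemma 5.1.7]{ambrosio2008gradient} at the level of measures), and the final squeeze against $v^*(0,\gamma(0))\le J^{m^*}(\gamma)$. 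The only minor difference is that the paper establishes admissibility of every $\gamma\in\supp(\xi^*)$ (via weak* limits of the uniformly bounded recovered controls) before the variational argument, whereas you extract it from finiteness of the limiting cost, but the underlying mechanism is the same.
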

\begin{remark} 
\label{comentarios_despues_del_teorema}
{\rm(i)} In addition to the assumptions of the previous theorem, let us assume that {\bf(H5)} holds. If \eqref{unicidad_casi_segura} holds, then Theorem~\ref {unicidad_xi_star}  implies that the MFG equilibrium $\xi^*$ is unique and hence  the whole sequence $(\xi^n)_{n\in \NN}$ converges $\xi^{*}$. In particular,  we have 
\be
\label{convergencia_marginales}
M^{n} \to m^* \quad \text{in } C([0,T];\P_1(\RR^d)) \; \; \text{as $n\to \infty$}.
\ee 
{\rm(ii)}  Notice that even in the particular case where the typical player controls directly its velocity, Theorem~\ref{main_result} significantly improves \cite[Theorem 4.1]{MR4030259}, where the cost function is assumed to satisfy some strong separability and regularity conditions. Indeed, for every $k\in\I^*$, $x\in \mathcal{S}_k$, and $M\in \M$, the value of $V_k^{M}(x)$ in \eqref{eq:hjb_eq} can be computed by using only the states $y\in \mathcal{S}_{k+1}^{1}(x)$, which is a uniformly bounded set with respect to the discretization parameters. On the other hand, the space grid in \cite{MR4030259}  is unbounded with respect to the discretization parameters. This lack of compactness of the support of the scheme in \cite{MR4030259} was the main issue in establishing the convergence result in  \cite[Theorem 4.1]{MR4030259} for more general cost functions. For instance, the function $[0,T]\times \RR^r\times \RR^d \times \P_1(\RR^d) \ni (t,a,x,\mu)\mapsto \ell(t,a,x,\mu)= \kappa(x)|a|^{p}\in \RR$, with $\kappa: \RR^d \to \RR$ being bounded from below by a strictly positive constant, Lipschitz continuous, and bounded, satisfies {\bf(H1)} but it does not satisfy the assumptions in \cite{MR4030259}.
\end{remark}
\subsection{Proof of Theorem~\ref{main_result}} In order to prove Theorem~\ref{main_result}, we first show some preliminary results. 
\begin{lemma} 
\label{lem:deriv-acotada-sup-xi-n} 
There exists $D_{\infty}>0$, independent of $n$,  such that for any $\gamma\in \supp(\xi^n)$ we have
$$
|\dot{\gamma}(t)|\leq  D_{\infty} \;\;\; \text{for a.e. }  t\in [0,T].
$$
\end{lemma}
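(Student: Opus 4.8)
The plan is to reduce the statement to a uniform bound on the difference quotients of $\gamma$ across consecutive grid times, and then to estimate those quotients componentwise using the membership relations that characterize $\supp(\xi^n)$. Fix $\gamma\in \supp(\xi^n)$ and $k\in \I^{n,*}$. By Remark~\ref{rem:cota-inf-supp-xi}{\rm(i)}, the derivative $\dot\gamma$ is constant on $(t_k^n,t_{k+1}^n)$ and equal to $(\gamma(t_{k+1}^n)-\gamma(t_k^n))/\Delta t_n$ (see \eqref{expresion_Derivatives}), and moreover $x:=\gamma(t_k^n)\in\SS_k^n$ and $y:=\gamma(t_{k+1}^n)\in \SS_{k+1}^n(x)$. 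Writing $x=(\mathrm{x}_1,\mathrm{x}_2)$ and $y=(\mathrm{y}_1,\mathrm{y}_2)$, it therefore suffices to bound $|\mathrm{y}_1-\mathrm{x}_1|/\Delta t_n$ and $|\mathrm{y}_2-\mathrm{x}_2|/\Delta t_n$ separately, uniformly in $n$, $k$, and $\gamma$, since $|\cdot|$ is the maximum norm and hence $|\dot\gamma(t)|=\max\{|\mathrm{y}_1-\mathrm{x}_1|/\Delta t_n,\,|\mathrm{y}_2-\mathrm{x}_2|/\Delta t_n\}$. The crucial fact I would use throughout is that $\SS_k^n\subset K\subset\ov{\mathrm{B}}(0,C_\infty)$ by \eqref{s_k_in_big_compact} and Remark~\ref{compact_support_uniform}, so that all evaluations of $A$ and $B$ occur at points of a fixed compact set.

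For the first block, $\mathrm{y}_1\in\SS^{1,n}_{k+1}(x)$ gives $|\alpha(k,x,\mathrm{y}_1)|\leq \widehat{C}$, and rearranging the definition \eqref{definition_alpha_n} yields $(\mathrm{y}_1-\mathrm{x}_1)/\Delta t_n = A_1(t_k^n,x)+B_1(t_k^n,x)\alpha(k,x,\mathrm{y}_1)$. Since $|A_1(t_k^n,x)|\leq |A(t_k^n,x)|\leq C_A(1+C_\infty)$ by \eqref{erwrwqrqxa} and $|B_1(t_k^n,x)|\leq|B(t_k^n,x)|\leq C_B$ by {\bf(H3)}{\rm(ii)}, this block is bounded by $C_A(1+C_\infty)+C_B\widehat{C}$. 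For the second block I would split $(\mathrm{y}_2-\mathrm{x}_2)/\Delta t_n$ as $(\mathrm{y}_2-\ydos(k,x,\mathrm{y}_1))/\Delta t_n+(\ydos(k,x,\mathrm{y}_1)-\mathrm{x}_2)/\Delta t_n$. By \eqref{definition_alpha_n} the second summand equals $A_2(t_k^n,x)+B_2(t_k^n,x)\alpha(k,x,\mathrm{y}_1)$ and is bounded exactly as before by $C_A(1+C_\infty)+C_B\widehat{C}$; for the first summand, $\mathrm{y}_2\in\SS^{2,n}_{k+1}(x,\mathrm{y}_1)$ forces $\ydos(k,x,\mathrm{y}_1)\in\supp\beta_{\mathrm{y}_2}$, so the support condition \eqref{support_beta_x} gives $|\mathrm{y}_2-\ydos(k,x,\mathrm{y}_1)|\leq C_I\Delta x_n$ and hence $|\mathrm{y}_2-\ydos(k,x,\mathrm{y}_1)|/\Delta t_n\leq C_I(\Delta x_n/\Delta t_n)\leq C_I$, the last inequality using $(\Delta t_n,\Delta x_n)\in\widehat{\Delta}$ (see \eqref{conjunto_de_parametros}).

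Collecting the two estimates, I obtain $|\dot\gamma(t)|\leq C_A(1+C_\infty)+C_B\widehat{C}+C_I=:D_\infty$ for a.e. $t\in[0,T]$, a constant depending only on the data and on $C_\infty$, hence independent of $n$. I do not expect a genuine obstacle here: the argument is essentially bookkeeping, and the only point that requires attention is verifying that every bound is uniform in the discretization parameters. That uniformity is exactly what the scheme is designed to provide, through the parameter-independent compact set $K$ of Lemma~\ref{the_big_compact} (yielding the uniform $C_\infty$ of Remark~\ref{compact_support_uniform}), the uniform control bound $\widehat{C}$ from \eqref{controles_acotados_unif}, and the ratio constraint $\Delta x_n/\Delta t_n\leq 1$ built into $\widehat{\Delta}$.
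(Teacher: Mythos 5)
Your proof is correct and follows essentially the same route as the paper's: reduce to the difference quotients via Remark~\ref{rem:cota-inf-supp-xi}{\rm(i)}, bound the first block by $C_A(1+C_\infty)+C_B\widehat{C}$ using the control bound $\widehat{C}$ and the uniform compact set, and bound the second block by adding the interpolation error $C_I\Delta x_n/\Delta t_n\leq C_I$ controlled by $\widehat{\Delta}$. The resulting constant matches the paper's estimate.
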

\begin{proof} Let $\gamma \in   \supp(\xi^n)$.  Writing $\gamma(t_k^n) =\left(\gamma_1(t_k^n) ,\gamma_2(t_k^n) \right)\in \cR^r\times \cR^{d-r} $, Remark \ref{rem:cota-inf-supp-xi}{\rm(i)} and the definition of $\SS_{k+1}^{1,n}(\gamma(t_{k}^{n}))$ yield
$$
\gamma_1(t_{k+1}^n) =\gamma_1(t_{k}^n)+ \Delta t_n \left[ A_1(t_k^n, \gamma(t_{k}^n)) + B_1(t_k^n, \gamma(t_{k}^n))\alpha(k,\gamma(t_{k}^n),\gamma_1(t_{k+1}^n))  \right],
$$
with $\left|\alpha(k,\gamma(t_{k}^n),\gamma_1(t_{k+1}^n))\right| \leq \widehat{C}$. Thus, from Remark \ref{despues_de_hipotesis}{\rm(i)} and Remark \ref{rem:cota-inf-supp-xi}{\rm(ii)}, we obtain 
$$
\left| \frac{\gamma_1(t^n_{k+1})-\gamma_1(t^n_k)}{\Delta t_n} \right|\leq C_A(1+C_\infty)+C_B \widehat{C},
$$
and, by \eqref{definition_alpha_n}, 
$$
\ba{lll}
\ds \left| \frac{\gamma_2(t^n_{k+1})-\gamma_2(t^n_k)}{\Delta t_n} \right|&\leq&
\ds \left| \frac{\gamma_2(t^n_{k+1})-\ydos(k,\gamma(t_{k}^n),\gamma_1(t_{k+1}^n))}{\Delta t_n} \right| \\[15pt]
\;&\;& +|A_2(t^n_k, \gamma_n(t^n_k)) +B_2(t^n_k, \gamma(t^n_k))\alpha(k,\gamma(t_{k}^n),\gamma_1(t_{k+1}^n))|\\[12pt]
\;&\leq& \ds C_I\frac{\Delta x_n}{\Delta t_n}+C_A(1+C_\infty)+C_B \widehat{C}.
\ea
$$
The result follows from $(\Delta t_n, \Delta x_n)\in \widehat{\Delta}$. 
\end{proof}

By  Remark \ref{rem:cota-inf-supp-xi} and Lemma \ref{lem:deriv-acotada-sup-xi-n}, we obtain the following compactness results.
\begin{lemma}   \label{prop:compactness} The following hold: 
\begin{enumerate}[{\rm(i)}]
\item  The sequence $(\xi^n)_{n\in\NN}$ is a relatively compact subset of $\P(\Gamma)$ endowed with the topology of narrow convergence.

\item For $t\in [0,T]$ and $n\in\NN$ we have
$$
\supp\left(M^n(t)\right)\subset \ov{\mathrm{B}}(0,C_{\infty}).
$$
\item We have that 
$$
d_1\left(M^n(s), M^n(t)\right)\leq D_{\infty} |s-t|\quad\text{for all }s,\,t\in [0,T],\,n\in\NN.
$$
\item The sequence $\left(M^n\right)_{n\in\NN}$ is a relatively compact subset of $C\left([0,T];\P_1(\cR^d) \right)$.
\end{enumerate}
\end{lemma}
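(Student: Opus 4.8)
The plan is to derive all four items from the two uniform-in-$n$ bounds that are already in hand: every $\gamma\in\supp(\xi^n)$ satisfies $\|\gamma\|_\infty\leq C_\infty$ (Remark~\ref{rem:cota-inf-supp-xi}{\rm(ii)}) and, being absolutely continuous with $|\dot\gamma(t)|\leq D_\infty$ a.e. (Remark~\ref{rem:cota-inf-supp-xi}{\rm(i)} and Lemma~\ref{lem:deriv-acotada-sup-xi-n}), is $D_\infty$-Lipschitz. First I would introduce
$$
\mathcal{K}=\left\{\gamma\in\Gamma \;\big|\; \|\gamma\|_\infty\leq C_\infty,\;\; |\gamma(s)-\gamma(t)|\leq D_\infty|s-t|\ \text{for all }s,t\in[0,T]\right\},
$$
which is uniformly bounded, equi-Lipschitz and closed, hence compact in $\Gamma$ by the Arzel\`a--Ascoli theorem. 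The two bounds give $\supp(\xi^n)\subset\mathcal{K}$ for every $n$. For item {\rm(i)}, since all the $\xi^n$ are supported in the single compact set $\mathcal{K}$, the family $(\xi^n)_{n\in\NN}$ is tight, so Prokhorov's theorem yields relative compactness for the narrow topology.

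For {\rm(ii)} I would use that $M^n(t)=e_t\sharp\xi^n$ together with $\supp(\xi^n)\subset\mathcal{K}$: since $|\gamma(t)|\leq C_\infty$ for every $\gamma\in\mathcal{K}$,
$$
M^n(t)\big(\cR^d\setminus\ov{\mathrm{B}}(0,C_\infty)\big)=\xi^n\big(\{\gamma\,|\,\gamma(t)\notin\ov{\mathrm{B}}(0,C_\infty)\}\big)=0,
$$
whence $\supp(M^n(t))\subset\ov{\mathrm{B}}(0,C_\infty)$. For {\rm(iii)}, the natural choice is the coupling $\pi^n:=(e_s,e_t)\sharp\xi^n\in\Pi(M^n(s),M^n(t))$, induced by $\gamma\mapsto(\gamma(s),\gamma(t))$. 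Then, estimating $|\gamma(s)-\gamma(t)|\leq D_\infty|s-t|$ pathwise from the a.e. derivative bound,
$$
d_1\big(M^n(s),M^n(t)\big)\leq\int_{\cR^d\times\cR^d}|x-y|\dd\pi^n(x,y)=\int_\Gamma|\gamma(s)-\gamma(t)|\dd\xi^n(\gamma)\leq D_\infty|s-t|.
$$
In particular this equi-Lipschitz estimate shows $M^n\in C([0,T];\P_1(\cR^d))$, as asserted just after the definition \eqref{rem:def-Mn-cont}.

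Finally, for {\rm(iv)} I would invoke the Arzel\`a--Ascoli theorem for functions from the compact interval $[0,T]$ into the metric space $\P_1(\cR^d)$: equicontinuity of the family $(M^n)_{n\in\NN}$ is precisely the uniform Lipschitz bound {\rm(iii)}, while pointwise relative compactness follows from {\rm(ii)}, since for each $t$ the set $\{M^n(t)\,|\,n\in\NN\}$ lies in $\P(\ov{\mathrm{B}}(0,C_\infty))$, which is compact by \cite[Proposition 7.1.5]{ambrosio2008gradient}. Hence $(M^n)_{n\in\NN}$ is relatively compact in $C([0,T];\P_1(\cR^d))$. None of the steps is genuinely hard given the preceding lemmas; the only point requiring care is the topological bookkeeping in {\rm(iv)}, namely that on the fixed compact set $\ov{\mathrm{B}}(0,C_\infty)$ the metric $d_1$ metrizes narrow convergence and makes $\P(\ov{\mathrm{B}}(0,C_\infty))$ compact, so that the abstract Arzel\`a--Ascoli statement applies with $\P_1(\cR^d)$ as target. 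This is exactly what the cited \cite[Proposition 7.1.5]{ambrosio2008gradient} provides, so I expect no real obstacle beyond making that identification explicit.
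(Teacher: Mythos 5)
Your proof is correct and follows essentially the same route as the paper's: both reduce everything to the uniform bounds $\|\gamma\|_\infty\leq C_\infty$ and $\|\dot\gamma\|_\infty\leq D_\infty$ on $\supp(\xi^n)$, use Arzel\`a--Ascoli to get a single compact set of paths containing all supports (your $\mathcal{K}$ is the paper's $\Gamma_\infty$), push forward by $e_t$ for (ii), use the coupling $(e_s,e_t)\sharp\xi^n$ for (iii), and apply Arzel\`a--Ascoli again together with the compactness of $\P(\ov{\mathrm{B}}(0,C_\infty))$ for (iv). The only cosmetic difference is that you invoke tightness and Prokhorov for (i) where the paper cites the compactness of $\P(K)$ for compact $K$ from \cite[Proposition 7.1.5]{ambrosio2008gradient}; these are interchangeable here.
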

\begin{proof}{\rm(i)} Let $\Gamma_{\infty}:= \{ \gamma \in W^{1,\infty}([0,T]; \RR^d) \, | \, \| \gamma\|_{\infty} \leq C_{\infty}, \; \|\dot{\gamma}\|_{\infty} \leq D_{\infty} \}$. Since Remark~\ref{rem:cota-inf-supp-xi}{\rm(ii)} and Lemma~\ref{lem:deriv-acotada-sup-xi-n} imply that $\supp(\xi^n) \subseteq \Gamma_{\infty}$, the result follows from the compactness of $\Gamma_{\infty}$ in $(\Gamma,\|\cdot\|_{\infty})$ and \cite[Proposition 7.1.5]{ambrosio2008gradient}.

{\rm(ii)} This follows from \eqref{rem:def-Mn-cont} and Remark~\ref{rem:cota-inf-supp-xi}{\rm(ii)}.

{\rm(iii)} Set $\Gamma \ni \gamma \mapsto e_{s,t}(\gamma)=(\gamma(s), \gamma(t))\in \RR^d \times \RR^d$. Then, by \eqref{W1_distance} and Lemma~\ref{lem:deriv-acotada-sup-xi-n}, we have that
$$d_1\left(M^n(s), M^n(t)\right) \leq \int_{\RR^d \times \RR^d} |x-y| \dd (e_{s,t}\sharp \xi_n)(x,y)= \int_{\Gamma}|\gamma(s)-\gamma(t)| \dd \xi_{n}(\gamma) \leq D_{\infty} |s-t|.$$
{\rm(iv)} Since $\{\mu \in \P_1(\RR^d) \; | \; \supp(\mu) \subset  \ov{\mathrm{B}}(0,C_{\infty}) \}$ is compact in $\P_1(\RR^d)$ (see e.g. \cite[Proposition 7.1.5]{ambrosio2008gradient}), the result follows from {\rm(ii)}-{\rm(iii)}  and the Arzel\`a-Ascoli theorem. 
\end{proof}

We denote by  $\{v^n_k\;|\;k\in\I^n \}$ and $\{\mathcal{V}^n_k\;|\;k\in\I^n \}$   the family of functions given by \eqref{semidiscrete-scheme} and \eqref{fully_discreto_HJB} when $m=M^n$, respectively.  We will need the following technical result. 
\begin{lemma} 
\label{lem:conv-v-vt} 
Under the assumptions of Theorem~\ref{main_result}, as $n\to \infty$,  the following hold:
\begin{enumerate}[{\rm(i)}]
\item $\max \left\{\left|V^n_k(x)-\mathcal{V}^n_k(x)\right|\; | \; k\in \I^{n}, \;  x\in \SS_k^{n}\right\} \to  0$. \label{convergence_value_function_i} \vspace{0.2cm}
\item  $\max\left\{\left|v^n_k( x)-\mathcal{V}^n_k(x) \right|\;|\;k\in\I^n,\;x\in\SS^n_k\right\} \to 0$.\label{convergence_value_function_ii}
\end{enumerate}
\end{lemma}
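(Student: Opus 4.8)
The plan is to prove both statements by backward induction on $k$, estimating at each time step the discrepancy between the schemes and controlling the accumulation of these one-step errors over the $N_t^n$ steps. Throughout I would use the uniform Lipschitz bound \eqref{lem:Lips-v-semidis} for $v^n$, the uniform inclusion $\SS_k^n\subseteq\ov{\mathrm{B}}(0,C_\infty)$ from Remark~\ref{compact_support_uniform}, the fact that $\supp(M^n(t))\subseteq\ov{\mathrm{B}}(0,C_\infty)$ (Lemma~\ref{prop:compactness}), and the local Lipschitz continuity of $\ell$ in its control variable, which follows from {\bf(H1)}{\rm(iii)} together with the compactness of the relevant sets of states, controls and measures. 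The two structural facts I would exploit are that for fixed $(k,x)$ the map $\mathrm{y}_1\mapsto\alpha(k,x,\mathrm{y}_1)$ is an affine bijection, so that the semi-discrete scheme \eqref{semidiscrete-scheme-dpp_bola} is exactly the change of variables $\alpha\mapsto\mathrm{y}_1=\mathrm{x}_1+\Delta t[A_1+B_1\alpha]$ of the fully-discrete scheme \eqref{fully_discreto_HJB} with the interpolation removed, and that the $\QQ_1$ interpolant is a convex combination of nodal values over points at distance at most $C_I\Delta x$ from $\ydos(k,x,\mathrm{y}_1)$ by \eqref{support_beta_x}.

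For part {\rm(ii)}, set $e_k^n=\max_{x\in\SS_k^n}|v_k^n(x)-\mathcal{V}_k^n(x)|$, with $e_{N_t}^n=0$. The inequality $v_k^n\le\mathcal{V}_k^n+(\text{err})$ is the easy direction: the minimizer $\mathrm{y}_1^\ast\in\SS^1_{k+1}(x)$ of \eqref{fully_discreto_HJB} provides the admissible control $\alpha(k,x,\mathrm{y}_1^\ast)$ (of modulus $\le\widehat{C}$) for \eqref{semidiscrete-scheme-dpp_bola} whose discrete trajectory lands exactly at $(\mathrm{y}_1^\ast,\ydos(k,x,\mathrm{y}_1^\ast))$, so splitting $v^n_{k+1}-I[\mathcal{V}^n_{k+1}]$ into an interpolation error ($\le L_vC_I\Delta x$) plus a term bounded by $e_{k+1}^n$ closes the recursion. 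The reverse direction is delicate: given the optimal control $\alpha^\ast$ of \eqref{semidiscrete-scheme-dpp_bola}, the exact next first-coordinate need not lie on $\G_r$, and merely rounding it to the grid may violate $|\alpha(k,x,\cdot)|\le\widehat{C}$, i.e. the rounded point may leave $\SS^1_{k+1}(x)$. I would resolve this by first contracting $\alpha^\ast$ to $(1-\delta_n)\alpha^\ast$ with $\delta_n$ of order $\Delta x/\Delta t$ and then rounding to the nearest node of $\G_r$; since $|\alpha(k,x,\mathrm{y}_1)-\alpha(k,x,\mathrm{y}_1')|\le c_K|\mathrm{y}_1-\mathrm{y}_1'|/\Delta t$ by \eqref{definition_alpha_n} and \eqref{c_k}, this choice guarantees admissibility while keeping $|\alpha(k,x,\mathrm{y}_1)-\alpha^\ast|=O(\Delta x/\Delta t)$. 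Feeding this $\mathrm{y}_1$ into \eqref{fully_discreto_HJB}, the running-cost discrepancy is $\Delta t\,O(\Delta x/\Delta t)=O(\Delta x)$ by local Lipschitzness of $\ell$, the interpolation error is $O(\Delta x)$, and the state mismatch contributes $O(\Delta x)$ via $L_v$; altogether $e_k^n\le e_{k+1}^n+C\Delta x_n$, whence $e_0^n\le C\,N_t^n\Delta x_n=C\,N_t^n/N_s^n\to0$.

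For part {\rm(i)}, set $\eta_k^n=\max_{x\in\SS_k^n}|V_k^n(x)-\mathcal{V}_k^n(x)|$, with $\eta_{N_t}^n=0$. Here \eqref{eq:hjb_eq} and \eqref{fully_discrete_HJB} differ only through the penalization $\eps\mathcal{E}_{\SS^1_{k+1}(x)}(p)$, which satisfies $-\log\#\SS^1_{k+1}(x)\le\mathcal{E}_{\SS^1_{k+1}(x)}(p)\le0$. Testing the optimal $p$ of each problem in the variational problem defining the other, and bounding the interpolant differences by $\eta_{k+1}^n$, gives the two-sided estimate $\eta_k^n\le\eta_{k+1}^n+\eps_n\log\#\SS^1_{k+1}(x)$. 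It then remains to count: since $\SS^1_{k+1}(x)$ is the intersection of $\G_r$ with $\mathrm{x}_1+\Delta t A_1(t_k,x)+\Delta t B_1(t_k,x)\ov{\mathrm{B}}(0,\widehat{C})$, a set of diameter $O(\Delta t)$ meshed at spacing $\Delta x$, one has $\#\SS^1_{k+1}(x)\le C(\Delta t/\Delta x+1)^r$, so $\log\#\SS^1_{k+1}(x)=O(\log(N_s^n/N_t^n))=O(\log N_s^n)$. Summing over the $N_t^n$ steps yields $\eta_0^n\le C\,N_t^n\eps_n\log N_s^n$, which tends to $0$ precisely because of the hypothesis $\eps_n=o(1/(N_t^n\log N_s^n))$.

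The main obstacle is the admissibility issue in the second inequality of part {\rm(ii)}: the optimal semi-discrete control may saturate the bound $\widehat{C}$, so naive rounding to $\G_r$ can leave $\SS^1_{k+1}(x)$; the contraction-then-rounding device above is what makes the construction work, and it is exactly here that the refinement hypothesis $\Delta x_n/\Delta t_n=N_t^n/(T N_s^n)\to0$ (equivalently $N_t^n/N_s^n\to0$) is used to send the resulting $O(\Delta x/\Delta t)$ control perturbation to zero. The only other point requiring care is the cardinality estimate in part {\rm(i)}, which must be sharp enough (logarithmic in $N_s^n$) to be absorbed by the prescribed rate of $\eps_n$.
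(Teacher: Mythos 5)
Your proposal is correct and follows essentially the same route as the paper's proof: the same backward recursions, the same two-sided entropy bound $0\le-\mathcal{E}_{\SS^1_{k+1}(x)}(p)\le\log\#\SS^1_{k+1}(x)$ absorbed by the hypothesis on $\eps_n$ for part (i), and for part (ii) the same contract-then-round construction (the paper chooses $\hat\alpha^n$ with $|\bar\alpha^n-\hat\alpha^n|\le\delta^n$ and $|\hat\alpha^n|\le\widehat{C}-\delta^n$ with $\delta^n=c_{B,\infty}\Delta x_n/\Delta t_n$, which is exactly your $(1-\delta_n)\alpha^\ast$ device) together with the Lipschitz bound $|\alpha(k,x,\mathrm{y}_1)-\alpha(k,x,\mathrm{y}_1')|\le c_K|\mathrm{y}_1-\mathrm{y}_1'|/\Delta t$ to guarantee admissibility in $\SS^1_{k+1}(x)$. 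The only cosmetic difference is your sharper cardinality count $O((\Delta t/\Delta x)^r)$ versus the paper's cruder $O((N_s^n)^d)$; both give the needed $O(\log N_s^n)$.
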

\begin{proof}
(i)  For any $k\in \I^{n,*}$ and $x\in\SS^n_k$, we have
\be\label{diferencia_v_k_generica}
\ba{lll}
\left|V^n_k(x)-\mathcal{V}^n_k(x) \right|&\leq& \ds\max_{\mathrm{y}_1\in S^{1,n}_{k+1}(x)}\left| \left(I_{\SS^{2,n}_{k+1}(x,\mathrm{y}_1)}\left[V^n_{k+1}( \mathrm{y}_1,\cdot )\right]-I_{\SS^{2,n}_{k+1}(x,\mathrm{y}_1)}\left[\mathcal{V}^n_{k+1}( \mathrm{y}_1,\cdot )\right]\right)(\ydos(k,x,\mathrm{y}_1))\right|\\[16pt]
\;&\;&\ds+\max_{p\in \P\left(S^{1,n}_{k+1}(x)\right)}\bigg|\eps_n \sum_{\mathrm{y}_1\in S^{1,n}_{k+1}(x)}p(\mathrm{y}_1)\log(p(\mathrm{y}_1))\bigg|. 
\ea
\ee
On the one hand, for every $\mathrm{y}_1\in S^{1,n}_{k+1}(x)$, we have
\be\label{difference_v_k_generic_time}
\ba{l}
\left| \left(I_{\SS^{2,n}_{k+1}(x,\mathrm{y}_1)}\left[V^n_{k+1}(\mathrm{y}_1,\cdot)\right]-I_{\SS^{2,n}_{k+1}(x,\mathrm{y}_1)}\left[\mathcal{V}^n_{k+1}(\mathrm{y}_1,\cdot)\right]\right)(\ydos(k,x,\mathrm{y}_1))\right|\\[12pt]
\hspace{5cm}\leq \ds \sum_{\mathrm{y}_2\in \SS^{2,n}_{k+1}(x,\mathrm{y}_1)}\beta_{\mathrm{y}_2}(\ydos(k,x,\mathrm{y}_1))\left| V^n_{k+1}(\mathrm{y}_1,\mathrm{y}_2)-\mathcal{V}^n_{k+1}(\mathrm{y}_1,\mathrm{y}_2)\right| \\[22pt]
\hspace{5cm}\leq  \max \left\{\left|V^n_{k+1}(y)-\mathcal{V}^n_{k+1} (y)\right|\; | \;    y\in \SS_{k+1}^{n}\right\}.
\ea
\ee
On the other hand,
\be\label{bounding_the_entropy_term}
\ba{lll}
\ds\max_{p\in \P(S^{1,n}_{k+1}(x))}\left|\eps_n \sum_{\mathrm{y}_1\in S^{1,n}_{k+1}(x))}p(\mathrm{y}_1)\log(p(\mathrm{y}_1))\right| &=&-\ds\eps_n \min_{p\in \P\left(S^{1,n}_{k+1}(x)\right)} \sum_{\mathrm{y}_1\in S^{1,n}_{k+1}(x)}p(\mathrm{y}_1)\log(p(\mathrm{y}_1))\\[22pt]
\;&=& \eps_n \log\left(\big|S^{1,n}_{k+1}(x)\big|\right),
\ea
\ee
where $\big|S^{1,n}_{k+1}(x)\big|$ is the cardinal number of   $S^{1,n}_{k+1}(x)$. Since  $S^{1,n}_{k+1}(x)\subset \ov{B}(0,C_{\infty})$ (see Remark \ref{rem:cota-inf-supp-xi}{\rm(ii)}), there exists $C>0$, independent of $n$, such that $|S^{1,n}_{k+1}(x)| \leq C  (N^n_s)^d$. Setting 
$$\delta_k V:=\max \left\{\left|V^n_k(x)-\mathcal{V}^n_k(x)\right|\; |  \;  x\in \SS_k^{n}\right\},$$
relations \eqref{diferencia_v_k_generica}-\eqref{bounding_the_entropy_term} yield  $
\delta_k V \leq \delta_{k+1} V  + (Cd)\eps_n \log(N^n_s)$. The result follows by iterating the previous inequality, the identity $\delta_{N_{t}^n} V =0$, and   the assumption about the sequence $(\eps_n)_{n\in\NN}$.

{\rm(ii)}  By definition $v^n_{N^n_t}(x)=\mathcal{V}^n_{N^n_t}(x)$, for all $x\in\SS^n_{N^n_t}$. Given $k\in \I^{n,*}$, assume that there exists $L^n_{k+1}>0$ such that
\be
\label{eq:lips-k+1}
\left|v^n_{k+1}(x)-\mathcal{V}^n_{k+1}(x)\right|\leq L^n_{k+1}\Delta x_n \quad \text{for all }x\in \SS^n_{k+1}. 
\ee
Given $x=(\mathrm{x}_1, \mathrm{x}_2)\in\SS^n_k$, let $ \mathrm{y}_1^n\in S^{1,n}_{k+1}(x)$ be a minimizer associated with $\mathcal{V}^n_k(x)$ (see \eqref{fully_discreto_HJB}).  Recalling \eqref{definition_alpha_n}, from \eqref{semidiscrete-scheme-dpp_bola} and \eqref{fully_discreto_HJB}, we have
$$
\ba{lll}
v^n_k(x)-\mathcal{V}^n_k(x)&\leq&  v^n_{k+1} ( \mathrm{y}^n_1, \ydos(k,x,  \mathrm{y}^n_1)) -I_{\SS^{2,n}_{k+1}(x,\mathrm{y}^n_1)}[\mathcal{V}^n_{k+1}( \mathrm{y}^n_1, \cdot)]( \ydos(k,x,  \mathrm{y}^n_1)))\\[12pt]
\;&=&\ds\sum_{\mathrm{y}_2\in \SS^{2,n}_{k+1}(x,\mathrm{y}^n_1)}\beta_{\mathrm{y}_2}(\ydos(k,x,  \mathrm{y}^n_1))\left[ v^n_{k+1} ( \mathrm{y}^n_1, \ydos(k,x,  \mathrm{y}^n_1)))- v^n_{k+1}( \mathrm{y}^n_1, \mathrm{y}_2)\right]\\[20pt]
\;&\;&+\ds\sum_{\mathrm{y}_2\in \SS^{2,n}_{k+1}(x,\mathrm{y}^n_1)}\beta_{\mathrm{y}_2}( \ydos(k,x,  \mathrm{y}^n_1)))\left[ v^n_{k+1} ( \mathrm{y}^n_1, \mathrm{y}_2) - \mathcal{V}^n_{k+1} ( \mathrm{y}^n_1, \mathrm{y}_2)\right].
\ea
$$
By \eqref{lem:Lips-v-semidis} and \eqref{support_beta_x} we obtain 
$$
\sum_{\mathrm{y}_2\in \SS^{2,n}_{k+1}(x, \mathrm{y}^n_1)}\beta_{\mathrm{y}_2}( \ydos(k,x,  \mathrm{y}^n_1)))\left[ v^n_{k+1}(\mathrm{y}^n_1,  \ydos(k,x,  \mathrm{y}^n_1)))- v^n_{k+1}(\mathrm{y}^n_1, \mathrm{y}_2)\right] \leq L_v C_I \Delta x_n. 
$$
From \eqref{eq:lips-k+1} we also have that
$$
\sum_{\mathrm{y}_2\in \SS^{2,n}_{k+1}(x, \mathrm{y}^n_1)}\beta_{\mathrm{y}_2}( \ydos(k,x,  \mathrm{y}^n_1)))\left[ v^n_{k+1}(\mathrm{y}^n_1, \mathrm{y}_2) - \mathcal{V}^n_{k+1}(\mathrm{y}^n_1, \mathrm{y}_2)\right]\leq L^n_{k+1}\Delta x_n
$$
and hence
\be\label{v_menos_V}
v^n_k( x)-\mathcal{V}^n_k(x)\leq \left(L^n_{k+1} + L_vC_I \right)\Delta x_n. 
\ee

On the other hand, by Lemma~\ref{lem:controles-dis-acotados} in the Appendix I of this work, there exists $ \bar\alpha^{n}  \in \ov{\mathrm{B}}(0,\widehat{C})$ such that 
$$
v^n_k(x)=   \Delta t_n \ell(t_k^n, \bar\alpha^{n} , x,M^n(t_k^n))+v_{k+1}^n\left(x+\Delta t[A(t_k^n, x)+ B(t_k^n, x) \bar\alpha^{n} ]\right).
$$
Let us set $ c_{B, \infty} =\max\{ |B_1(t,x)^{-1}|  \; | \; t\in [0,T], \, x\in \ov{\mathrm{B}}(0,C_{\infty})\}\in (0,\infty)$ and  $\delta^n=c_{B, \infty}  \Delta x_n/\Delta t_n$. Since $\Delta x_n/ \Delta t_n \to 0$, as $n\to \infty$,  we have that $\delta^n \in (0,\widehat{C})$ for all  $n\in \NN$ large enough. For all those $n\in \NN$, let $ \hat{\alpha}^{n} \in\cR^r$  be  such that 
\be\label{alpha_hat_dentro}\left|\bar\alpha^{n}-\hat\alpha^{n}  \right| \leq \delta^n  \quad \text{and} \quad \left|\hat\alpha^{n} \right| \leq  \widehat{C}-\delta^n.
\ee
Setting 
$$
 \bar{\mathrm{y}}_1^{n}:=\mathrm{x}_1+\Delta t_n\left[ A_1(t^n_k, x)+B_1(t^n_k, x) \bar\alpha^n  \right]\quad \text{and}\quad  \hat{\mathrm{y}}_1^n :=\mathrm{x}_1+\Delta t_n\left[ A_1(t^n_k, x)+B_1(t^n_k, x) \hat\alpha^{n}  \right],
$$
there exists $ \tilde{\mathrm{y}}_1^{n} \in \G^n_r$ such that  $\left|\hat{\mathrm{y}}_1^{n}- \tilde{\mathrm{y}}_1^{n}\right|\leq \Delta x_n$. Defining $ \tilde\alpha^{n}:= \alpha(k, x, \tilde{\mathrm{y}}_1^n)$, from {\bf (H3)} we get
\be\label{alpha_hat_y_alpha_tilde}
\left| \hat\alpha^n -\tilde\alpha^n \right| \leq \left| B_1(t^n_k, x)^{-1}\left[ \frac{\hat{\mathrm{y}}_1^n-\mathrm{x}_1}{\Delta t_n} - \frac{\tilde{\mathrm{y}}_1^n-\mathrm{x}_1}{\Delta t_n}\right]\right|= \left| B_1(t^n_k, x)^{-1} \left[ \frac{\hat{\mathrm{y}}_1^n-\tilde{\mathrm{y}}_1^n}{\Delta t_n}\right] \right| \leq \delta^n.
\ee
Thus, \eqref{alpha_hat_dentro} and \eqref{alpha_hat_y_alpha_tilde} imply that    
\be\label{diferencia_controles_tilde}
\tilde{\alpha}^n \in \ov{\mathrm{B}}(0,\widehat{C}) \quad \text{and} \quad \left| \bar\alpha^{n}-\tilde\alpha^{n}\right|\leq \left| \bar\alpha^{n}-\hat\alpha^{n}\right|+\left| \hat\alpha^{n}-\tilde\alpha^{n}\right| \leq 2\delta^n. 
\ee
In particular,
\be\label{diferencia_y_s}
\tilde{\mathrm{y}}_1^{n}\in S^{1,n}_{k+1}(x) \quad \text{and} \quad  
 \left|\bar{\mathrm{y}}_1^n-\tilde{\mathrm{y}}_1^n \right|  \leq 2 C_B\Delta t_n  \delta^n= 2C_{B}c_{B,\infty} \Delta x_n,
\ee
where, to obtain the inequality,  we have used that $ \tilde{\mathrm{y}}_1^{n}:=\mathrm{x}_1+\Delta t_n\left[ A_1(t^n_k, x)+B_1(t^n_k, x) \tilde\alpha^n  \right]$ and  {\bf (H3)}(ii).
 
Similarly, setting  $\bar{\mathrm{y}}_2^n:=\mathrm{x}_2+\Delta t_n [A_2(t^n_k, x)+B_2(t^n_k, x)\bar\alpha^n]$ and $\tilde{\mathrm{y}}_2^n:=\mathrm{x}_2+\Delta t_n [A_2(t^n_k, x)+B_2(t^n_k, x)\tilde\alpha^n]$, \eqref{diferencia_controles_tilde} yields 
\be\label{diferencia_de_los_y_2_tildas}
\left|\bar{\mathrm{y}}_2^n-\tilde{\mathrm{y}}_2^n \right|  \leq 2C_{B}c_{B,\infty} \Delta x_n.
\ee

The optimality of $\bar\alpha^n$ implies
\be\label{V_menos_v}
\ba{lll}
\mathcal{V}^n_k( x)-v^n_k( x)&\leq& \Delta t_n\left[ \ell(t^n_k, \tilde\alpha^n, x, M^n(t^n_k))-\ell(t^n_k, \bar\alpha^n, x, M^n(t^n_k))\right]\\[6pt]
\;&\;&+ I_{\SS^{2,n}_{k+1}(x,\tilde{\mathrm{y}}^n_1)}[\mathcal{V}^n_{k+1}(\tilde{\mathrm{y}}_1^n, \cdot)](\tilde{\mathrm{y}}_2^n) - v^n_{k+1} (\bar{\mathrm{y}}_1^n, \bar{\mathrm{y}}_2^n).
\ea
\ee
Since  $\tilde\alpha^n, \;\bar\alpha^n\in \ov{\mathrm{B}}(0, \widehat{C})$, Remark~\ref{compact_support_uniform}, {\bf(H1)}(iii), and \eqref{diferencia_controles_tilde} imply the existence of $\Lb>0$ such that 
\be\label{diferencia_eles}
\left|\ell(t^n_k, \tilde\alpha^n, x, M^n(t^n_k))-\ell(t^n_k, \bar\alpha^n, x, M^n(t^n_k))\right|\leq \Lb |\tilde\alpha^n-\bar\alpha^n|\leq 2 \Lb  \delta^n .
\ee
Using \eqref{eq:lips-k+1}, \eqref{diferencia_y_s}, \eqref{diferencia_de_los_y_2_tildas}, and \eqref{support_beta_x}, we also have
\be\label{diferencia_I_V_y_v}
\ba{l}
 I_{\SS^{2,n}_{k+1}(x,\tilde{\mathrm{y}}^n_1)}[\mathcal{V}^n_{k+1}(\tilde{\mathrm{y}}_1^n, \cdot)](\tilde{\mathrm{y}}_2^n) - v^n_{k+1}(\bar{\mathrm{y}}_1^n, \bar{\mathrm{y}}_2^n)\\[6pt]
 \hspace{3.3cm}=\ds\sum_{\mathrm{y}_2\in \SS^{2,n}_{k+1}(x,\tilde{\mathrm{y}}^n_1)}\beta_{\mathrm{y}_2}(\tilde{\mathrm{y}}_2^n)\left[ \mathcal{V}^n_{k+1}(\tilde{\mathrm{y}}_1^n, \mathrm{y}_2) - v^n_{k+1}(\tilde{\mathrm{y}}_1^n, \mathrm{y}_2)\right]\\[22pt]
 \hspace{3.6cm}+\ds\sum_{\mathrm{y}_2\in \SS^{2,n}_{k+1}(x,\tilde{\mathrm{y}}^n_1)}\beta_{\mathrm{y}_2}(\tilde{\mathrm{y}}_2^n)\left[ v^n_{k+1}(\tilde{\mathrm{y}}_1^n, \mathrm{y}_2)-v^n_{k+1}(\bar{\mathrm{y}}_1^n, \bar{\mathrm{y}}_2^n)\right]\\[22pt]
 \hspace{3.3cm}\leq   L^n_{k+1}\Delta x_n +L_{v}   \left( |\tilde{\mathrm{y}}_{1}^{n}-\bar{\mathrm{y}}_{1}^{n} | + |\tilde{\mathrm{y}}_{2}^{n}-\bar{\mathrm{y}}_{2}^{n} |  +\sum_{\mathrm{y}_2\in \SS^{2,n}_{k+1}}\beta_{\mathrm{y}_2}(\tilde{\mathrm{y}}_2^n)| \mathrm{y}_{2}-\tilde{\mathrm{y}}_{2}^{n}|\right)\\[10pt]
 \hspace{3.3cm} \leq \left( L^n_{k+1} + L_v\left( 4C_{B}c_{B,\infty} +C_{I} \right) \right)\Delta x_n.
\ea
\ee

Therefore, using \eqref{v_menos_V} and \eqref{V_menos_v}-\eqref{diferencia_I_V_y_v}, we deduce that 
\be\label{diferencia_v_n_k}
|v^n_k (x)-\mathcal{V}^n_k (x)|\leq L^n_k \Delta x_n, 
\ee
where  
$$
L^n_k=  L^n_{k+1} + L_v C_I+ 4 L_v  C_{B}c_{B,\infty}   + 2\ov{L}c_{B,\infty} .
$$
Since $L^n_{N^n_t}=0$, we deduce that $L_k^n \leq N^n_{t}(L_v C_I+ 2 L_v  C_{B}c_{B,\infty}   + 2\ov{L}c_{B,\infty})$, for all $k\in \I^n$,  and hence \eqref{diferencia_v_n_k} yields the existence of $C>0$, independent of $n$,  such that  
$$
|v^n_k (x)-\mathcal{V}^n_k (x)|\leq C \Delta x_n/ \Delta t_n \quad \text{for all }k\in \I^n,\, x\in \SS_k^n,
$$
and the result follows from the assumption $\Delta x_n / \Delta t_n \to 0$ as $n\to \infty$.
\end{proof}

We have now all the elements to prove Theorem \ref{main_result}.
\begin{proof}[Proof of Theorem~\ref{main_result}]  
{\rm(i)} By Lemma~\ref{prop:compactness}{\rm(i)},  there exists $\xi^*\in\P(\Gamma)$ such that,  as $n\to \infty$ and up to some subsequence, $\xi^n \to \xi^*$ in $\P(\Gamma)$. Let $\gamma \in \supp(\xi^*)$ and let us show that $\gamma$ is a.e. differentiable in $[0,T]$ with 
\be
\label{edo_gamma_soporte}
\dot{\gamma}(t)= A(t, \gamma(t)) + B(t,\gamma(t))\alpha(t) \quad \text{for a.e. } t\in [0,T]
\ee
for some $\alpha \in L^{\infty}([0,T];\RR^r)$ with $\|\alpha\|_{\infty}\leq \widehat{C}$. Indeed, by \cite[Proposition 5.1.8]{ambrosio2008gradient} there exists a sequence $\gamma^{n} \in \supp(\xi^n)\subseteq \Gamma^{n} $ such that $\gamma^n \to \gamma$ uniformly in $[0,T]$. Arguing as in the proof of Lemma~\ref{lem:conv-v-vt}{\rm(ii)}, we have that 
\be
\label{integral_discreta}
\gamma^{n}(t^{n}_{k+1}) =  \gamma^{n}(0) + \Delta t_n\sum_{j=0}^{k-1}[ A(t_{j}^n, \gamma^{n}(t_j^n)) + B(t_{j}^n, \gamma^{n}(t_j^n))\alpha^n(t_{j}^n)] + O(\Delta x_n/\Delta t_n), 
\ee
where $[0,T)\ni t\mapsto \alpha^n(t)\in \ov{\mathrm{B}}(0,\widehat{C})$ is defined by 
$$\alpha^n(t)= 
B_1(t^n_k,\gamma^n(t_k^n))^{-1}\left[\frac{\gamma^n(t_{k+1}^n)-\gamma^n(t_{k}^n)}{\Delta t_n} -A_1(t_k^n,\gamma^n(t_k^n)) \right] \quad \text{for all } t\in [t_{k}^{n}, t_{k+1}^n).
$$
Denote by $\omega_{A}$ and $\omega_{B}$ two modulus of continuity associated with $A$ and $B$ on $[0,T]\times  \ov{\mathrm{B}}(0,C_{\infty})$. By Lemma~\ref{lem:deriv-acotada-sup-xi-n}  and \eqref{integral_discreta}, we obtain
\be
\label{edo_antes_del_limite}
\ba{rcl}
\gamma^{n}(t^{n}_{k+1}) &=&  \ds \gamma^{n}(0) + \int_{0}^{t^{n}_{k+1}} \left[ A(s, \gamma^{n}(s)) + B(s, \gamma^{n}(s))\alpha^n(s)\right]\dd s \\[10pt]
\; & \; & \ds+ O(\omega_{A}\left(\Delta t_n)+\omega_{B}(\Delta t_n)+ \Delta t_n +\Delta x_n/\Delta t_n\right).
\ea
\ee
Since $\|\alpha^n\|_{\infty}\leq \widehat{C}$ for all $n\in \NN$, there exists $\alpha\in L^{\infty}([0,T];\RR^r)$, with $\|\alpha\|_{\infty}\leq \widehat{C}$, such that, up to some subsequence, $\alpha^n \to \alpha$ in the weak* topology. Let us fix $t\in [0,T)$ and let $k(n)\in \I^{n,*}$ be such that $t \in [t_{k(n)}^n, t_{k(n+1)}^n)$. Taking $k=k(n)$ in \eqref{edo_antes_del_limite} and passing to the limit yields
$$
\gamma(t)= \gamma(0)+ \int_{0}^{t}\left[A(s, \gamma(s)) + B(s,\gamma(s))\alpha(s)\right]\dd s,
$$
which implies that  \eqref{edo_gamma_soporte} holds.  

Since $\xi^n \to \xi^*$ in $\P(\Gamma)$, setting $m^*(t):=e_t\sharp \xi^*$ for  $t\in [0,T]$, we deduce that  $\left(M^n(t)\right)_{n\in\NN}$, defined in \eqref{rem:def-Mn-cont}, narrowly converges to $m^*(t)\in \P(\cR^d)$ for every $t\in [0,T]$ and hence, by Lemma~\ref{prop:compactness}{\rm(iv)}, we get that $(M^n)_{n\in \NN}$ converges to $m^*$ in $C\left([0,T];\P_1(\cR^d)\right)$. Moreover, the previous convergence,  Lemma~\ref{prop:compactness}{\rm(ii)},   and \cite[Proposition 5.1.8]{ambrosio2008gradient} imply that 
\be\label{soporte_m_*} \supp(m^*(t)) \subset \ov{\mathrm{B}}(0,C_{\infty}) \quad \text{for all $t\in [0,T]$}.
\ee
Let us now show that $\xi^*$ is a MFG equilibrium,  which is equivalent to prove that 
$$
  v(0,\gamma(0))=\int_0^T \ell(t,\alpha(t), \gamma(t), m^*(t))\dd t+g(\gamma(T), m^*(T)) \quad  \text{for $\xi^*$-a.e. $\gamma\in\Gamma$} ,
$$
where, setting $\gamma(t)=(\mathrm{\gamma}_1(t), \mathrm{\gamma}_2(t)) \in \RR^r \times \RR^{d-r}$, 
\be  
\label{eq:main-alpha}
\alpha(t):=B_1(t,\gamma(t))^{-1}\left[\dot{\gamma}_1(t)-A_1(t,\gamma(t)) \right] \quad \text{for a.e. $t \in [0,T]$}. 
\ee

Given $\xi\in \P(\Gamma)$ and a Borel function $\phi\colon  \Gamma \to \RR$, we set $\EE_{\xi}(\phi)= \int_{\Gamma} \phi(\gamma) \dd \xi (\gamma)$, provided that the integral exists.  From the definition of $\xi^n$, we have
$$   
\ba{rcl}
\sum_{x\in\SS^n_0}V_0^n(x)M_0^n(x)&=& \EE_{\xi^n} \bigg( \Delta t_n \ds \sum_{k=0}^{N^n_t-1}\bigg[\ell(t_k, \alpha(k,\gamma(t_k), \gamma_1(t_{k+1})),\gamma(t_k), M^n(t_k))  \\[6pt]
\; & \; & \hspace{2.5cm}+\eps_n \log \big(p_k^{M^n}(\gamma(t_k),\gamma_1(t_{k+1}))\big) \bigg] + g(\gamma(T), M^n(T))\bigg),
\ea
$$
where we recall that for all $k\in \I^{n,*}$, $x\in \SS^{n}_{k}$, and $\mathrm{y}_1 \in \RR^{r}$, $\alpha(k,x,\mathrm{y}_1)$ is given by \eqref{definition_alpha_n} and for the sake of simplicity we denote $t_k$ instead of $t^n_k$ for $k\in\I^n$. Then the assumption on the sequence $(\eps_n)_{n\in\NN}$  yields 
\be\label{eq:valor-esperanza-n}
\sum_{x\in\SS^n_0}V_0^n(x)M_0^n(x)= \EE_{\xi^n} \bigg( \Delta t_n \ds \sum_{k=0}^{N^n_t-1} \ell(t_k, \alpha(k,\gamma(t_k), \gamma_1(t_{k+1})),\gamma(t_k), M^n(t_k)) + g(\gamma(T), M^n(T))\bigg) + o(1),
\ee
where $o(1)\to 0$, as $n\to \infty$. Combining  Proposition~\ref{prop:conv-value-function} and  Lemma~\ref{lem:conv-v-vt}, we have that 
\be  \label{eq:convV0-M0}
\sum_{x\in\SS^n_0}V_0^n(x)M_0^n(x) \underset{n \to \infty}{\longrightarrow} \int_{\cR^d} v^*(0,x)\dd m_0(x)=\EE_{\xi^*}\left[v^*(0,\gamma(0))\right],
\ee
where $v^*$ is the value function \eqref{value_function_dependent_on_m} associated with $m^*$. Let us define  $L\colon [0,T]\times \cR^d \times \cR^d \to \RR$ as
$$
L(t,x,y):=\ell\left(t, B_1(t,x)^{-1}[\mathrm{y}_1-A_1(t,x)],x,m^{*}(t)\right) \quad \text{for all }(t,x,y)\in [0,T]\times \cR^d \times \cR^d,
$$
where we recall the decomposition $y=(\mathrm{y}_1, \mathrm{y}_2) \in \RR^{r}\times \RR^{d-r}$. Notice that $L$ is convex with respect to its third variable and, by {\bf(H1)}{\rm(i)},
$$
L(t,x,y)\geq -C_{\ell} \quad  \text{for all }(t,x,y)\in [0,T]\times \cR^d \times \cR^d.
$$

Using that $\int_0^T \ell(t,\alpha(t), \gamma(t), m^*(t))\dd t=\int_0^T L(t, \gamma(t), \dot{\gamma}(t))\dd t$ for every $\gamma\in W^{1,p}([0,T];\cR^d)$, where $\alpha$ is given by \eqref{eq:main-alpha}, we deduce from  \cite[Corollary 3.24]{dacorogna89} that the map
$$
W^{1,p}([0,T];\cR^d)  \ni \gamma   \mapsto \int_0^T \ell(t,\alpha(t), \gamma(t), m^*(t))\dd t \in \RR,
$$
is  lower semicontinuous. Thus, by \cite[Lemma 5.1.7]{ambrosio2008gradient}, we have
\be \label{ineq:El-semicont}
\EE_{\xi^*}\left[\int_0^T \ell(t,\alpha(t), \gamma(t), m^*(t))\dd t\right]\leq \liminf_{n\to\infty}\;\EE_{\xi^n}\left[ \int_0^T \ell(t,\alpha(t), \gamma(t), m^*(t))\dd t\right].
\ee 

Let us show that
\be \label{eq:dif-esperanza-n-ell}
\EE_{\xi^n}\Bigg[\Bigg| \int_0^T \ell(t,\alpha(t), \gamma(t), m^*(t))\dd t- \Delta t_n\sum_{k=0}^{N^n_t-1}\ell(t_k, \alpha(k, \gamma(t_k), \gamma_1(t_{k+1})),\gamma(t_k), M^n(t_k))\Bigg|\Bigg]
\ee
tends to zero as $n\to \infty$. Since the set $[0,T] \times \ov{\mathrm{B}}(0,\widehat{C})\times \ov{\mathrm{B}}(0,C_{\infty})\times \P(  \ov{\mathrm{B}}(0,C_{\infty}))$ is compact, by {\bf(H1)} there exists a  nondecreasing modulus of continuity $\omega_{\ell}\colon [0,\infty) \to [0,\infty)$ such that $\lim_{\tau \downarrow 0} \omega_{\ell}(\tau)=0$ and
$$
\left|\ell(t,a,x, \mu)-\ell(t', a', x',  \mu')\right| \leq \omega_{\ell}\left( |t-t'| + |a-a'| +|x-x'| + d_{1}(\mu,\mu') \right) 
$$
for all $t,\, t'\in [0,T]$, $a, \, a'\in \ov{\mathrm{B}}(0, \widehat{C})$, $x, \, x' \in \ov{\mathrm{B}}(0, C_{\infty})$, and $\mu, \, \mu' \in \P(\ov{\mathrm{B}}(0, C_{\infty}))$. In particular,  since $\omega_\ell$ is nondecreasing, Remark~\ref{rem:cota-inf-supp-xi}{\rm (ii)}, Lemma~\ref{prop:compactness}{\rm(ii)}, \eqref{soporte_m_*},  and Lemma~\ref{lem:deriv-acotada-sup-xi-n} imply that  for all $\gamma\in \supp(\xi^n)$, $k\in \I^{n,*}$, and $t\in [t_k,t_{k+1}]$, we have that
\be\label{diferencia_de_las_ells}\ba{l}
\left|\ell(t,\alpha(t), \gamma(t), m^*(t))-\ell(t_k, \alpha(k,\gamma(t_k), \gamma_1(t_{k+1})),\gamma(t_k), M^n(t_k))\right| \\[6pt]
\hspace{3cm}\leq \omega_{\ell}\bigg(\Delta t_n +|\alpha(t)- \alpha(k,\gamma(t_k), \gamma_1(t_{k+1}))|+ D_{\infty} \Delta t_n +d_{1}(M^{n}(t_k),m^*(t))\bigg).
\ea
\ee 
 Let $t\in (t_{k},t_{k+1})$,  $\gamma=(\gamma_1, \gamma_2) \in \supp(\xi^n)$,  and recall that 
$\dot{\gamma}_1(t)= (\gamma_1(t_{k+1})-\gamma_1(t_{k}))/\Delta t_n$. By \eqref{eq:main-alpha}, we have that  
\be\label{estimacion_diff_alphas}
\ba{l}
\left|\alpha(t)- \alpha(k,\gamma(t_k), \gamma_1(t_{k+1})) \right|\\[6pt]
\hspace{1cm}\leq\left| B_1(t, \gamma(t))^{-1}\left[ \dot{\gamma}_1(t)-A_1(t,\gamma(t))\right]-B_1(t_k, \gamma(t_k))^{-1}\left[ \dot{\gamma}_1(t)-A_1(t_k,\gamma(t_k))\right]\right|\\[6pt]
\hspace{1cm}\leq \left| B_1(t,\gamma(t))^{-1}\right|\left| A_1(t,\gamma(t))-A_1(t_k, \gamma(t_k))\right|\\[6pt]
\hspace{1.3cm}+\left|B_1(t,\gamma(t))^{-1}-B_1(t_k,\gamma(t_k))^{-1} \right|\left| \dot{\gamma}_1(t)-A_1(t_k,\gamma(t_k))\right|.
\ea
\ee
Since $A_1$ and $B_1^{-1}$ are continuous on $[0,T]\times \ov{\mathrm{B}}(0,C_{\infty})$, there exists a nondecreasing function $\ov{\omega} \colon [0,\infty) \to [0,\infty)$  such that $\lim_{\tau \downarrow 0} \omega (\tau)=0$ and 
$$
|A_{1}(t,x)- A_{1}(t',x')|   + |B_1(t,x)^{-1}- B_1(t',x')^{-1}| \leq \ov{\omega} (|t-t'| + |x-x'|) 
$$
for all $t, \, t' \in [0,T]$, and $x, \, x'\in \ov{\mathrm{B}}(0,C_{\infty})$.
Let $c_{B, \infty} =\max\{ |B(t,x)^{-1}|  \; | \; t\in [0,T], \, x\in \ov{\mathrm{B}}(0,C_{\infty})\}\in (0,\infty)$. By \eqref{estimacion_diff_alphas}, Lemma~\ref{lem:deriv-acotada-sup-xi-n}, and Remark \ref{despues_de_hipotesis}{\rm(i)},  we have 
$$
\left|\alpha(t)- \alpha(k,\gamma(t_k), \gamma_1(t_{k+1})) \right|  \leq  \left(c_{B, \infty}+D_{\infty} + C_{A}(1+C_{\infty}) \right)\ov{\omega}\big((1+ D_{\infty}) \Delta t_n\big).
$$
By \eqref{diferencia_de_las_ells}, \eqref{estimacion_diff_alphas}, and the convergence of $M^n$ to $m^*$ in $C([0,T]; \P_1(\RR^d))$, we conclude that the term in \eqref{eq:dif-esperanza-n-ell} tends to $0$ as $n\to \infty$.  

Since, as $n\to\infty$, $\xi^n\to\xi^*$ in $\P(\Gamma)$ and $M_{n}(T)\to m^*(T)$ in $\P(\ov{\mathrm{B}}(0,C_{\infty}))$, the continuity of $g$ yields 
\be \label{eq:g-semicont}
\EE_{\xi^*} \left[g(\gamma(T), m^*(T))\right]=\lim_{n\to\infty}\EE_{\xi^n}\left[ g(\gamma(T), M^n(T))\right].
\ee
Combining  \eqref{eq:valor-esperanza-n}, \eqref{eq:convV0-M0}, \eqref{ineq:El-semicont}, and \eqref{eq:g-semicont}, we conclude that
\be  \label{ineq-esperanzas}
\EE_{\xi^*}\left[\int_0^T \ell(t,\alpha(t), \gamma(t), m^*(t))\dd t+ g(\gamma(T), m^*(T)) \right]\leq \EE_{\xi^*}\left[v^*(0,\gamma(0))\right].
\ee
Furthermore, by the definition of $v^*$, for any $\gamma\in W^{1,p}([0,T];\RR^d)$ we have
$$
v^*(0,\gamma(0))\leq \int_0^T \ell(t,\alpha(t), \gamma(t), m^*(t))\dd t+ g(\gamma(T), m^*(T)).
$$
This inequality and \eqref{ineq-esperanzas} imply that for $\xi^*$-a.e. $\gamma\in\Gamma$, 
$$
v^*(0,\gamma(0))= \int_0^T \ell(t,\alpha(t), \gamma(t), m^*(t))\dd t+ g(\gamma(T), m^*(T)),
$$
which implies that $\xi^*$ is a MFG equilibrium.\smallskip\\
{\rm(ii)} The convergence in $C([0,T]; \P_1(\RR^d))$ of $(M^{n_l})_{l \in \NN}$ towards $m^*$  has already been shown in {\rm(i)}, while relation \eqref{convergencia_value_function} follows from {\rm(i)}, Proposition~\ref{prop:conv-value-function}, and  Lemma~\ref{lem:conv-v-vt}. 
\end{proof}
\section{Numerical results}
\label{sec:numerical_result}
In this section, we consider two instances of Problem~\ref{mfg_problem} with monotone couplings. The first one deals with a one-dimensional problem and a nonlinear dynamics with respect to the space variable. In second instance, we approximate the MFG equilibrium of  a two-dimensional problem in which a typical agent controls its acceleration (see e.g.~\cite{MR4102464, MR4132067}).

Given some discretization parameters $(\Delta t,\Delta x)\in (0,\infty)^2$, and $\varepsilon>0$, we approximate the solutions to the corresponding instances of Problem~\ref{MFG-SL_discrete_scheme_n-degenerate} by using the fictitious play algorithm recalled at the end of Section~\ref{sec:main_result}. More precisely, in both examples we consider the following routine:

\smallskip
\begin{algorithm}[H]
\SetAlgoLined
\KwData{ $\mathsf{M}^0\in\M$ and a tolerance parameter $\delta>0$}
$e\leftarrow \delta +1$\\[3pt]
$n\leftarrow 1$\\[3pt]
$\ov{\mathsf{M}}^1\leftarrow \mathsf{M}^0$

\While{$e>\delta$}{\vspace{0.2cm}
$\mathsf{M}^{n+1}= {\bf br}(\ov{\mathsf{M}}^n)$\\[3pt]
$e\leftarrow  |\mathsf{M}^{n+1}-\ov{\mathsf{M}}^n|_{L^1}$\\[3pt]
$\ov{\mathsf{M}}^{n+1}= \frac{n}{n+1}\ov{\mathsf{M}}^n+\frac{1}{n+1}\mathsf{M}^{n+1}$\\[3pt]
$n\leftarrow n+1$ 
}
\Return{$\ov{\mathsf{M}}^{n-1}$}
 \caption{The algorithm stops at iterate $n$ if $|\ov{\mathsf{M}}^{n}-{\bf br}(\ov{\mathsf{M}}^{n})|_{L^1}\leq \delta$.}
 \label{alg:FP}
\end{algorithm}
\vspace{0.5cm}
In~\eqref{alg:FP}, for every $n\in\NN$, we have set
$$
|\mathsf{M}^{n+1}-\ov{\mathsf{M}}^n|_{L^1}= \frac{1}{N_t+1} \sum_{k=0}^{N_t}\sum_{x\in \mathcal{S}_{k}} |\mathsf{M}_k^{n+1}(x)-\ov{\mathsf{M}}_k^n(x)|.
$$
Inspired by the {\it modified fictitious play} iterates introduced in~\cite[Section~4.2]{MR4030259}, the approximated equilibrium $\ov{\mathsf{M}}^{n-1}$ returned by Algorithm~\ref{alg:FP}, for a given tolerance parameter $\delta>0$, will be used as an initial guess $\mathsf{M}^0$ for a subsequent application of the same algorithm with a smaller tolerance parameter. In this manner, we obtain better numerical performances, in terms of speed of convergence, than fixing a small tolerance  parameter and applying Algorithm~\ref{alg:FP} only once.  
\subsection{Example 1}
\label{subsection_example_1}
We take $T=1$ as time horizon,  
\be 
\label{def_A_B_example_1}
A(t,x)=-2x-\sin(x),\quad \text{and}\quad B(t,x)=1 \quad \text{for all } (t,x)\in (0,T)\times \RR
\ee
in~\eqref{ecuacion_controlada}, which, for $\alpha\in L^{2}([0,T];\RR)$, yields the controlled dynamics
\be 
\label{modified_descent_gradient_dynamics}
\dot{\gamma}(t)=-2\gamma(t)-\sin(\gamma(t))+\alpha(t) \in\RR \quad  \text{for a.e. } t\in (0,1).
\ee
Notice that in the uncontrolled case $\alpha\equiv 0$, \eqref{modified_descent_gradient_dynamics} is a gradient descent dynamics associated with the strongly convex functional $\RR\ni x\mapsto h(x) = x^2-\cos(x)\in \RR$. Therefore, in this uncontrolled case, for any initial condition $x\in\RR$,  $\gamma(t)$ approaches to $0$, the unique solution to $x= -\sin(x)/2$,  exponentially fast as $t\to\infty$. The cost function is decomposed as in {\bf(H5)}{\rm(i)}. More precisely, let $\sigma>0$ and set 
\be 
\label{rho_sigma}
\rho_\sigma(x):= \frac{1}{\sqrt{2\pi}\sigma}e^{-x^2/2\sigma^2} \quad \text{for all }x\in\RR.
\ee
Given $\theta_1$, $\theta_2\geq 0$,  define 
\be 
\ba{rcl}
 \ds f(t,x,\mu)&=&\theta_1 (\rho_\sigma\star \mu)(x) \quad \text{for all } (t,x,\mu)\in [0,T]\times\RR\times \P_1(\RR), \\[6pt]
\ds \ell(t,a,x,\mu) &=& \ds \frac{|a|^2}{2}+ f(t,x,\mu) \quad \text{for all } (t,a,x,\mu)\in [0,T]\times\RR\times\RR\times \P_1(\RR), \\[6pt] 
\ds g(x,\mu)&=&\theta_2 (\rho_\sigma\star \mu)(x) \quad \text{for all } (x,\mu)\in\RR\times \P_1(\RR). 
\ea
\ee
We consider an absolutely continuous initial distribution $m_0\in\P_1(\RR)$ given by
$$
\dd m_0(x)= \mathbb{I}_{[-1,1]}(x)\frac{e^{-x^2/0.04}}{\int_{-1}^1 e^{-y^2/0.04}\dd y}\dd x,
$$
where $\mathbb{I}_{[-1,1]}(x)=1$ if $x\in[-1,1]$ and $\mathbb{I}_{[-1,1]}(x)=0$, otherwise.  

In this framework, a typical agent with an initial position in the interval $[-1,1]$ aims to approach the target position $\bar{x}=0$, while being averse to crowded areas. The above dataset satisfies {\bf(H1)}-{\bf(H4)}, {\bf(H5)}{\rm(i)} and, arguing as in the proof of Lemma~\ref{monotonia_ej_2} below,  {\bf(H5)}{\rm(ii)} also holds. Moreover, for any $m\in C([0,T];\P_1(\RR))$, the associated value function $v$, given by~\eqref{value_function_dependent_on_m}, is Lipschitz with respect to the state variable and, hence, $v(0,\cdot)$ is differentiable almost everywhere. Thus, since $m_0$ is absolutely continuous with respect to the Lebesgue measure, \cite[Theorem~7.4.20]{MR2041617} implies that \eqref{unicidad_casi_segura} holds. Therefore, under the assumptions on the discretization parameters in~Theorem~\ref{main_result},  the solutions to~Problem~\ref{MFG-SL_discrete_scheme_n-degenerate}   converge to the unique solution to Problem~\ref{mfg_problem}.

In our numerical tests we take  $\sigma=0.03$ and we set the following discretization parameters:
$$
\Delta t = 1/30, \quad \Delta x = 1/150, \quad \text{and}\quad \eps = 0.002.
$$
We provide in Table~\ref{tabla1} the results for two values of $(\theta_1,\theta_2)$, which determine the degree of aversion to crowded areas. In order to initialize our numerical tests, we take the constant time marginals  $\mathsf{M}^0_k=M_0$, for all $k=0, \dots, 30$, where $M_0$ is the discretization of the initial distribution $m_0$ described at the beginning of  Section~\ref{sec:main_result} (see \eqref{def-br}).  We run our tests for different choices of the tolerance  parameter  $\delta$. We take first $\delta = 0.1$, and then, in order to improve the speed of convergence, we use the distribution returned by Algorithm~\ref{alg:FP} as the initial distribution for the next tolerance parameter  $\delta =0.01$. Finally, we proceed in the same way for the final tolerance parameter  $\delta = 0.001$.  In the following table, we provide the number of iterations needed for attaining the prescribed tolerances for the two considered values of $(\theta_1,\theta_2)$.
\begin{table}[h]
\begin{center}
\vspace{0.1cm}
\begin{tabular}{|c|c|c|}
	\hline
\; &\quad {\small Test 1: $\theta_1= 1$, $\theta_2 = 0$} \quad &\quad {\small Test 2: $\theta_1= 1$, $\theta_2 = 1$ }\\[2pt]
	\hline
Tolerance & No. of iterations & No. of iterations \\[2pt]
\hline
$\delta =0.1$ &$n= 9$ & $n=9$\\
$\delta =0.01$ & $n=9$& $n=11$\\
$\delta = 0.001$ & $n=9$ & $n=9$\\
\hline
\end{tabular}
\end{center}
\caption{Number of iterations to obtain the desired accuracies.}
\label{tabla1}
\end{table}

In Figure~\ref{two_sets_of_theta_example_1},  we display the returned distributions in both tests for the smallest tolerance parameter $\delta = 0.001$. As expected, the presence of the term penalizing crowded areas at the final time plays an important role in the final distribution of the agents. Finally, we show in Figure~\ref{fictitious_play_working} the results for the first four iterations of Algorithm~\ref{alg:FP}, with $(\theta_1,\theta_2)=(1,1)$ and tolerance parameter $\delta =0.1$, which provides an insight on how the the fictitious play method learns the equilibrium.
\begin{figure}
\centering
\begin{tabular}{cccc}
\subfloat[Test 1: $(\theta_1,\theta_2)=(1,0)$.]{\includegraphics[width=.5\textwidth]{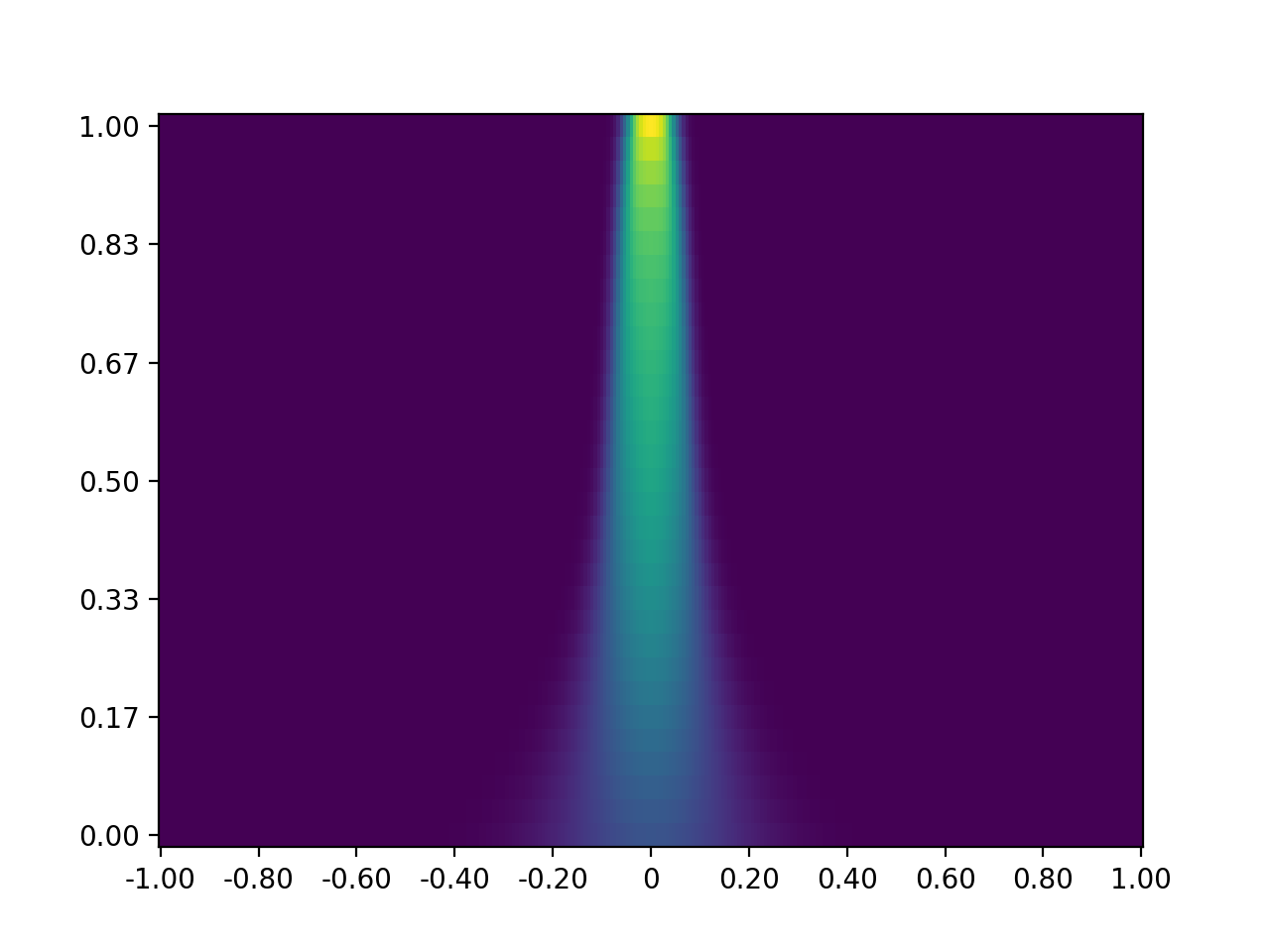}}& 
\subfloat[Test 2: $(\theta_1,\theta_2)=(1,1)$.]{\includegraphics[width=.5\textwidth]{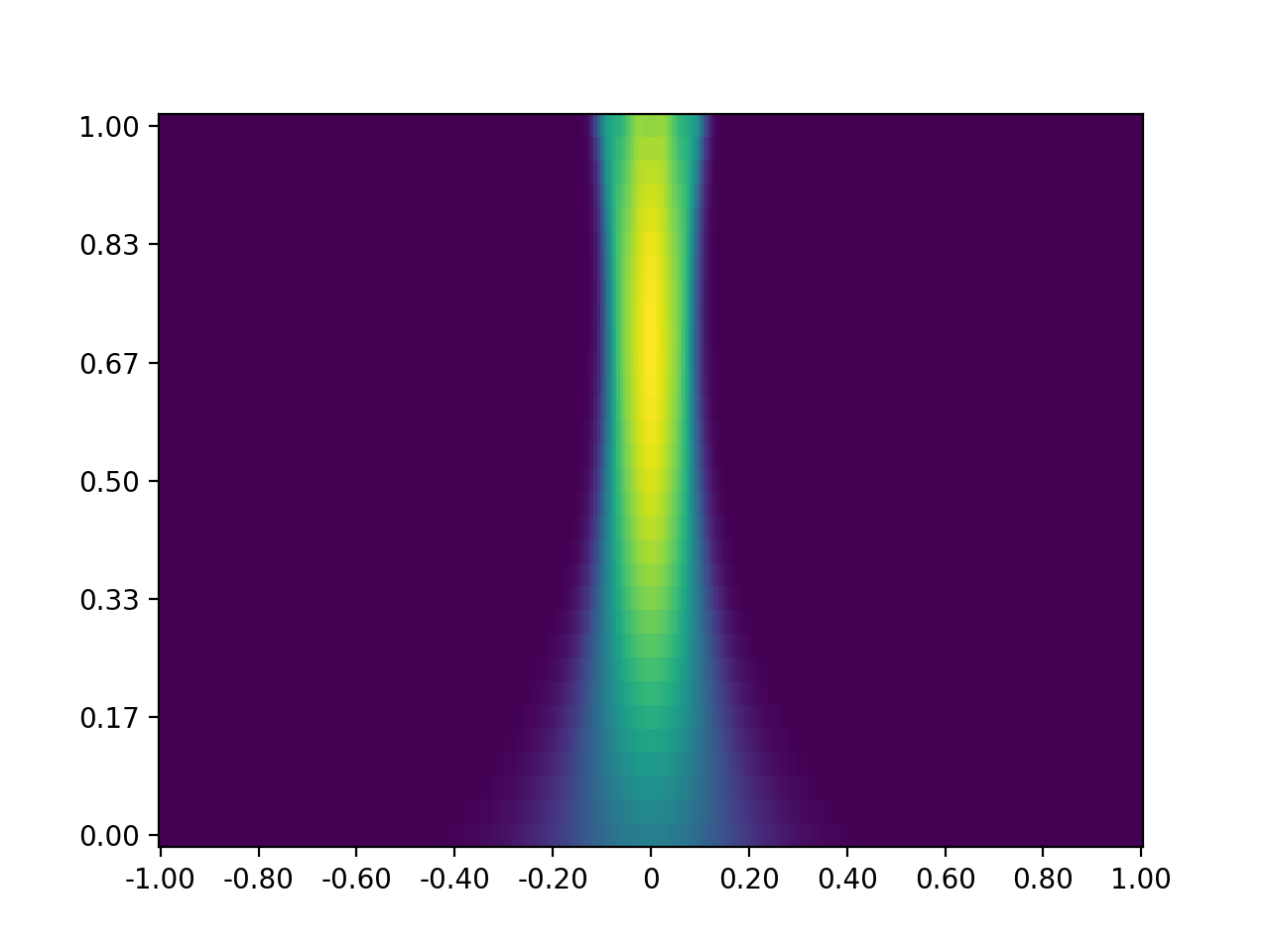}}
\end{tabular}
\caption{Distributions obtained for two values of $(\theta_1,\theta_2)$ and a tolerance parameter $\delta=0.001$.  The abscissa and ordinate axes represent position and time, respectively.}
\label{two_sets_of_theta_example_1}
\end{figure}

\begin{figure}
\centering
\begin{tabular}{cccc}
\subfloat[$\ov{\mathsf{M}}^1$]{\includegraphics[width=.3\textwidth]{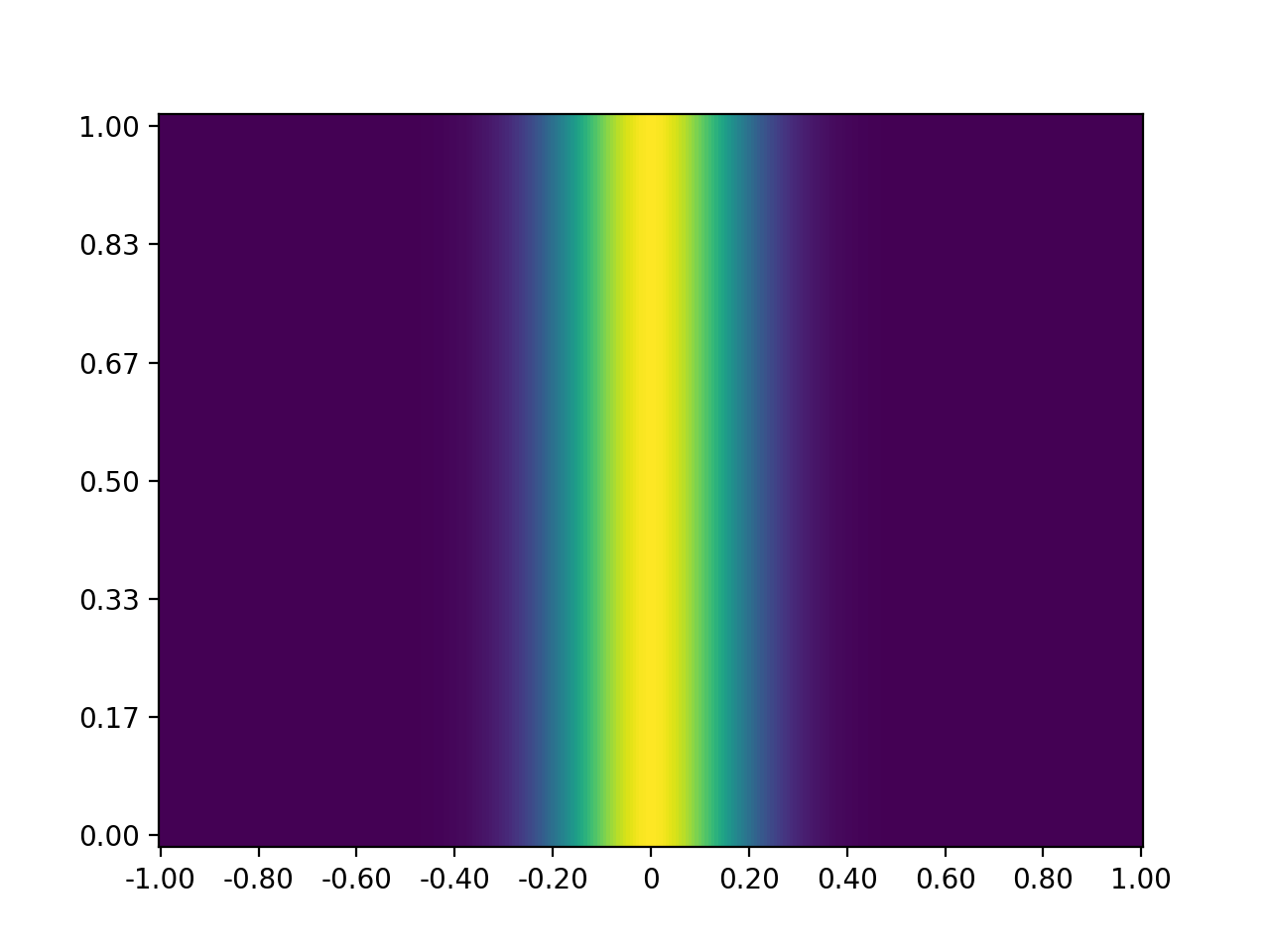}}& 
\subfloat[${\bf br}(\ov{\mathsf{M}}^1)$]{\includegraphics[width=.3\textwidth]{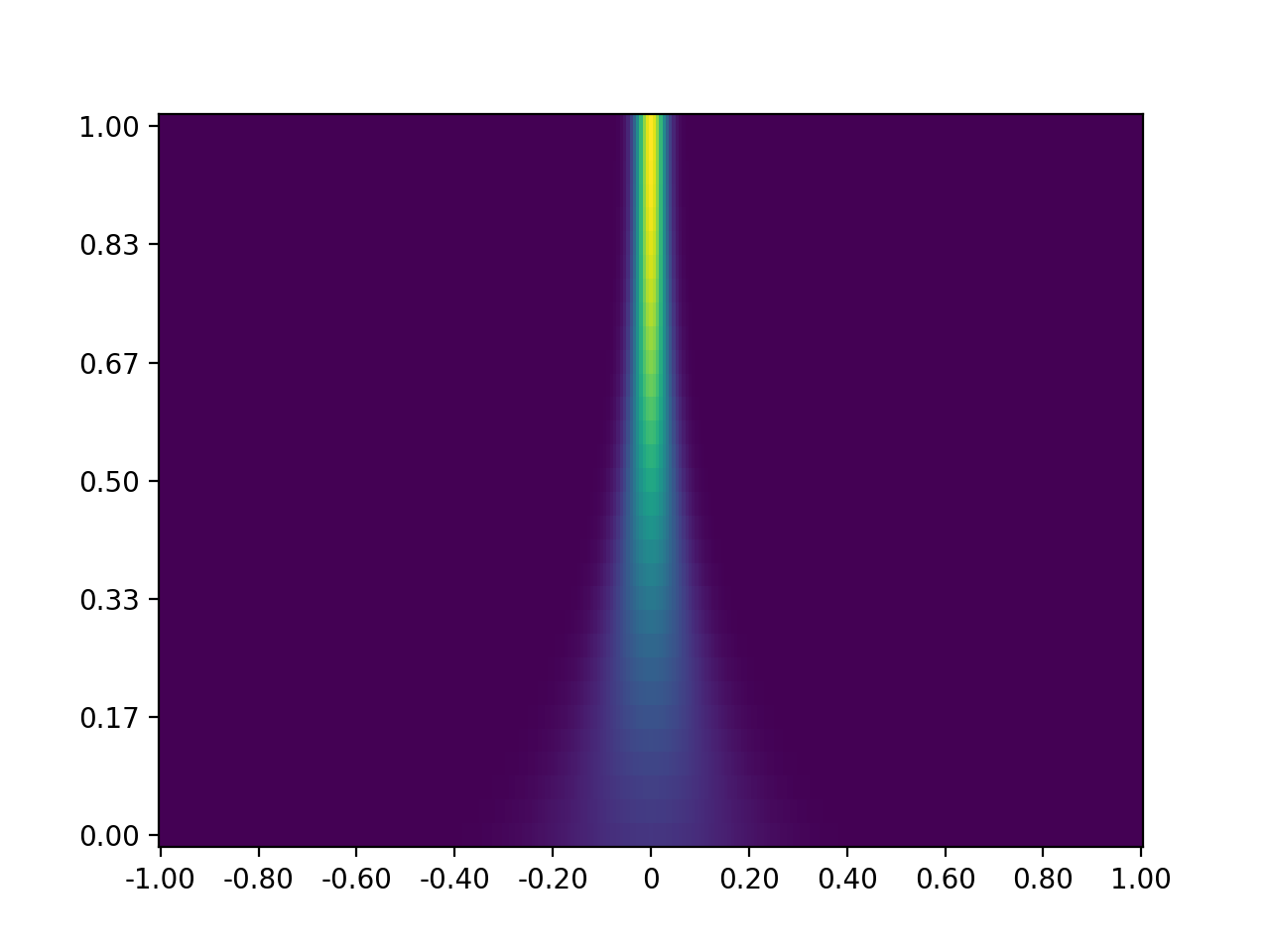}} \\
\subfloat[$\ov{\mathsf{M}}^2$]{\includegraphics[width=.3\textwidth]{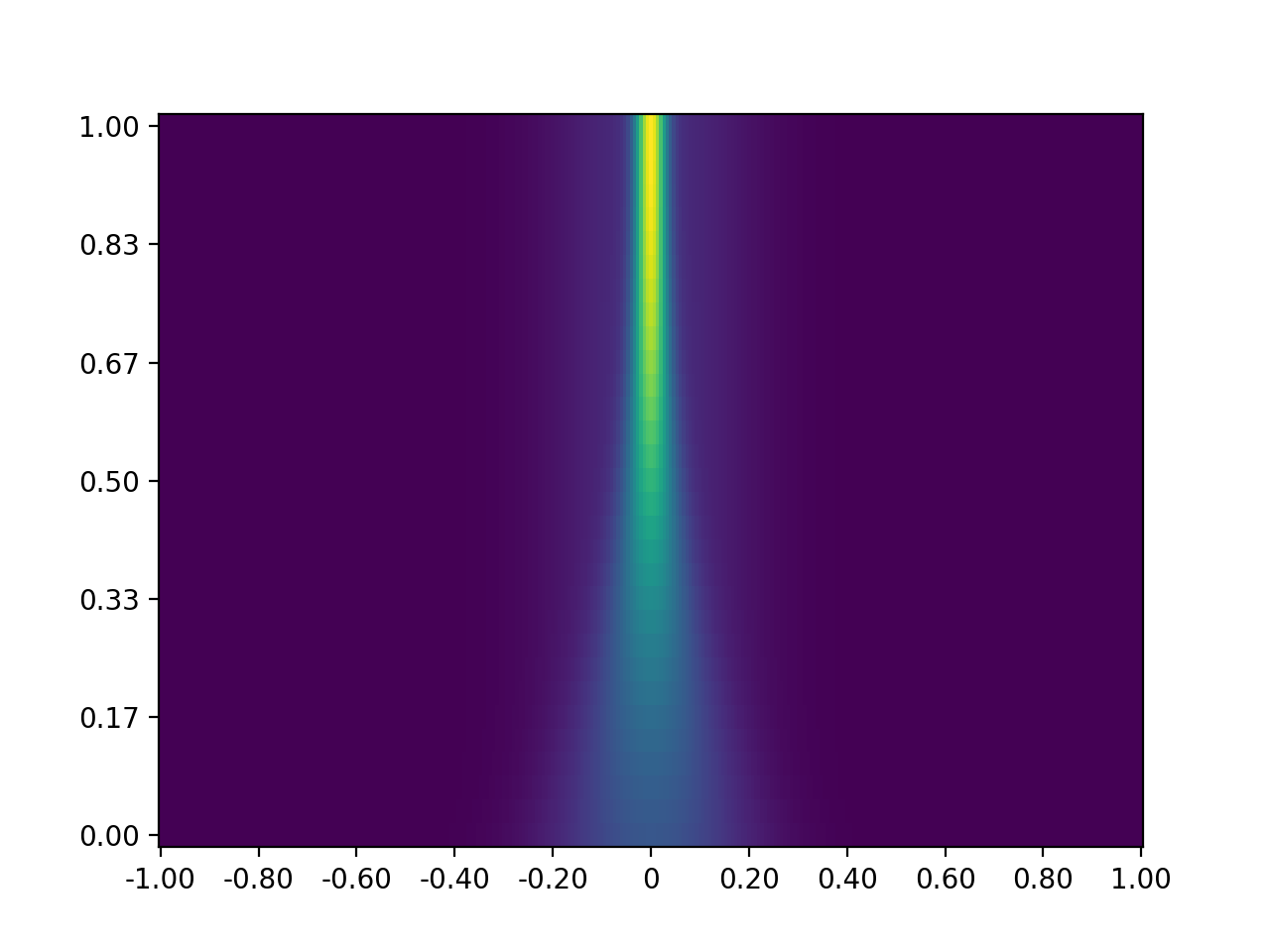}}& 
\subfloat[${\bf br}(\ov{\mathsf{M}}^2)$]{\includegraphics[width=.3\textwidth]{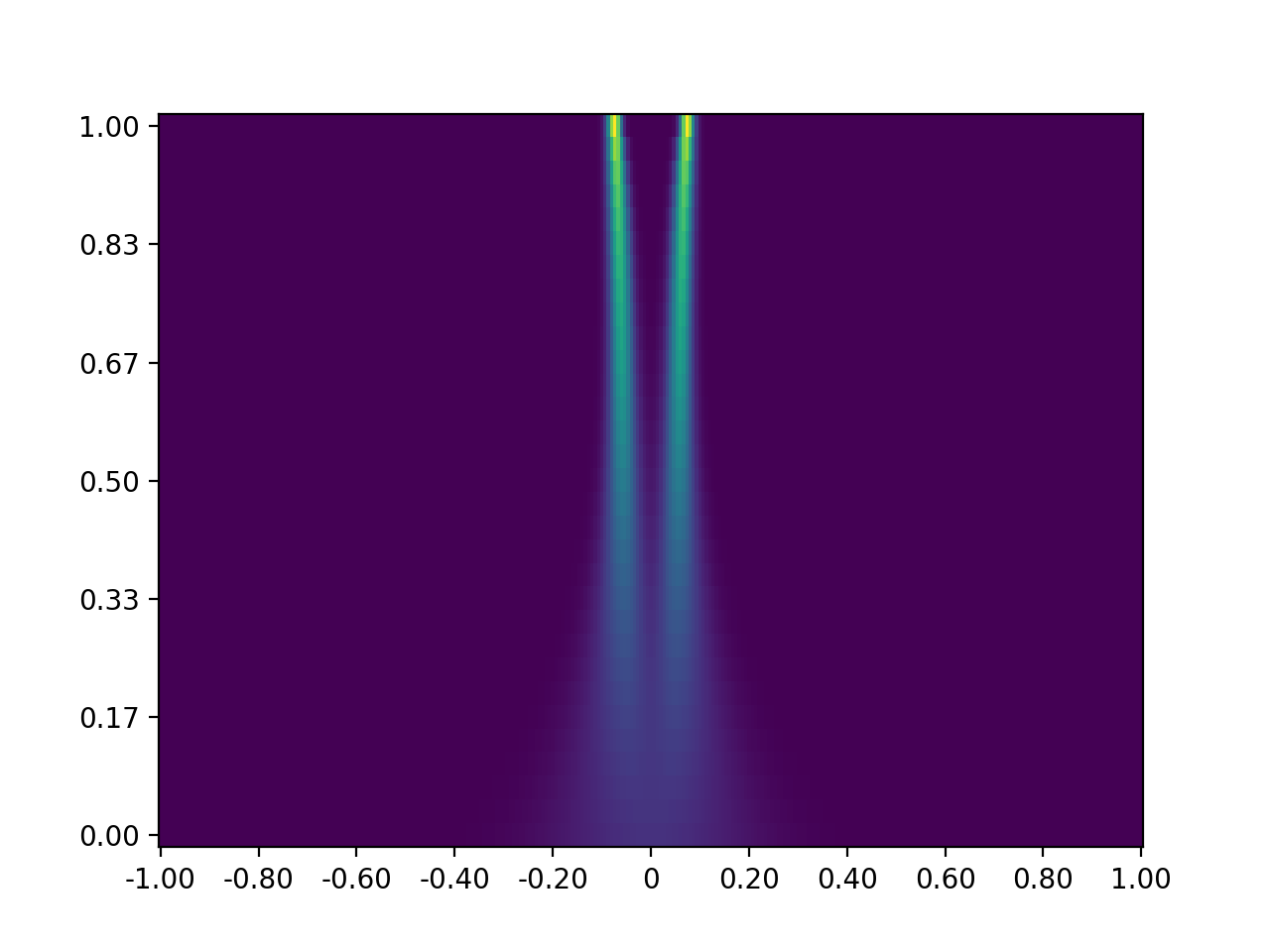}} \\
\subfloat[$\ov{\mathsf{M}}^3$]{\includegraphics[width=.3\textwidth]{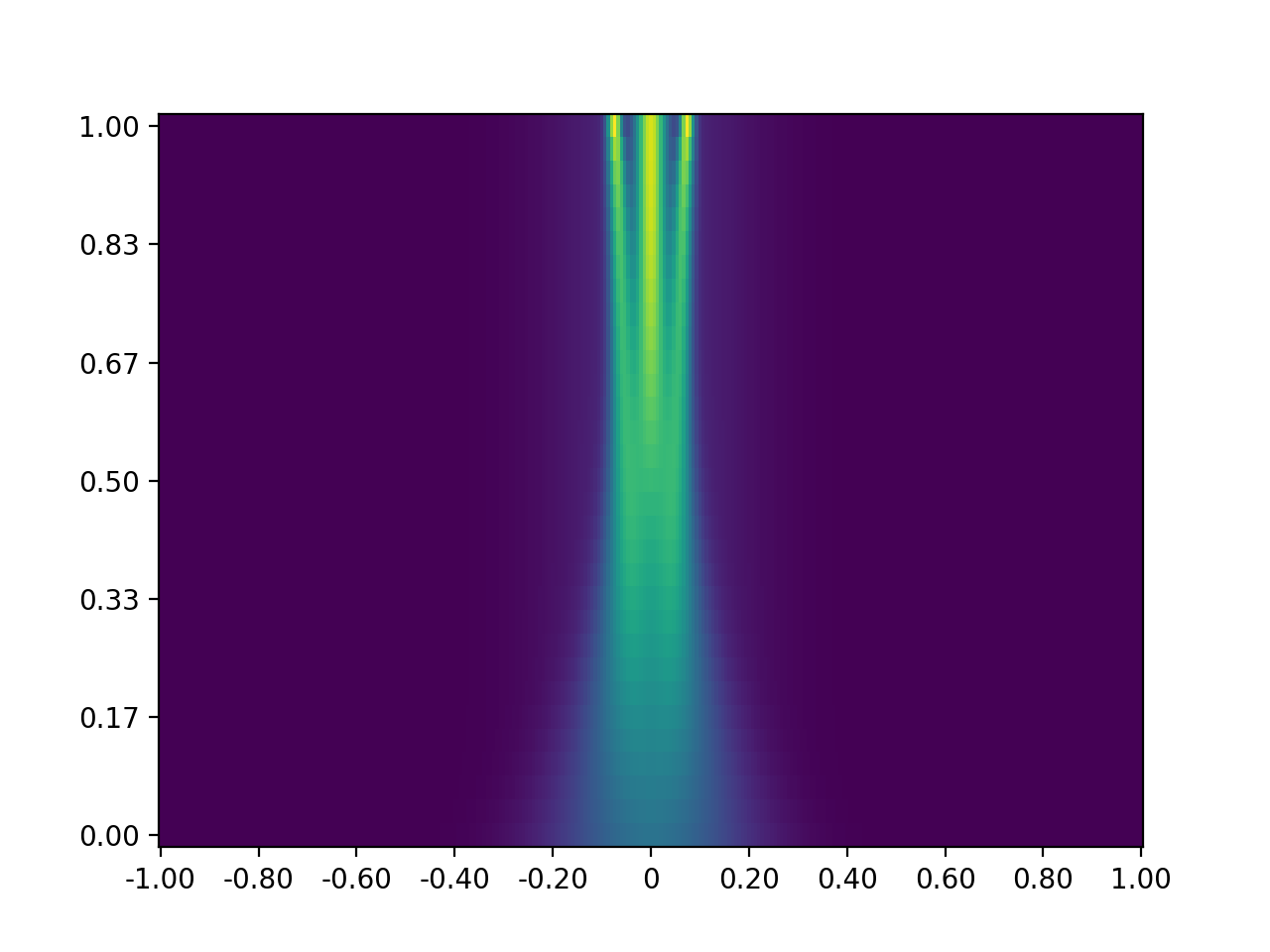}}& 
\subfloat[${\bf br}(\ov{\mathsf{M}}^3)$]{\includegraphics[width=.3\textwidth]{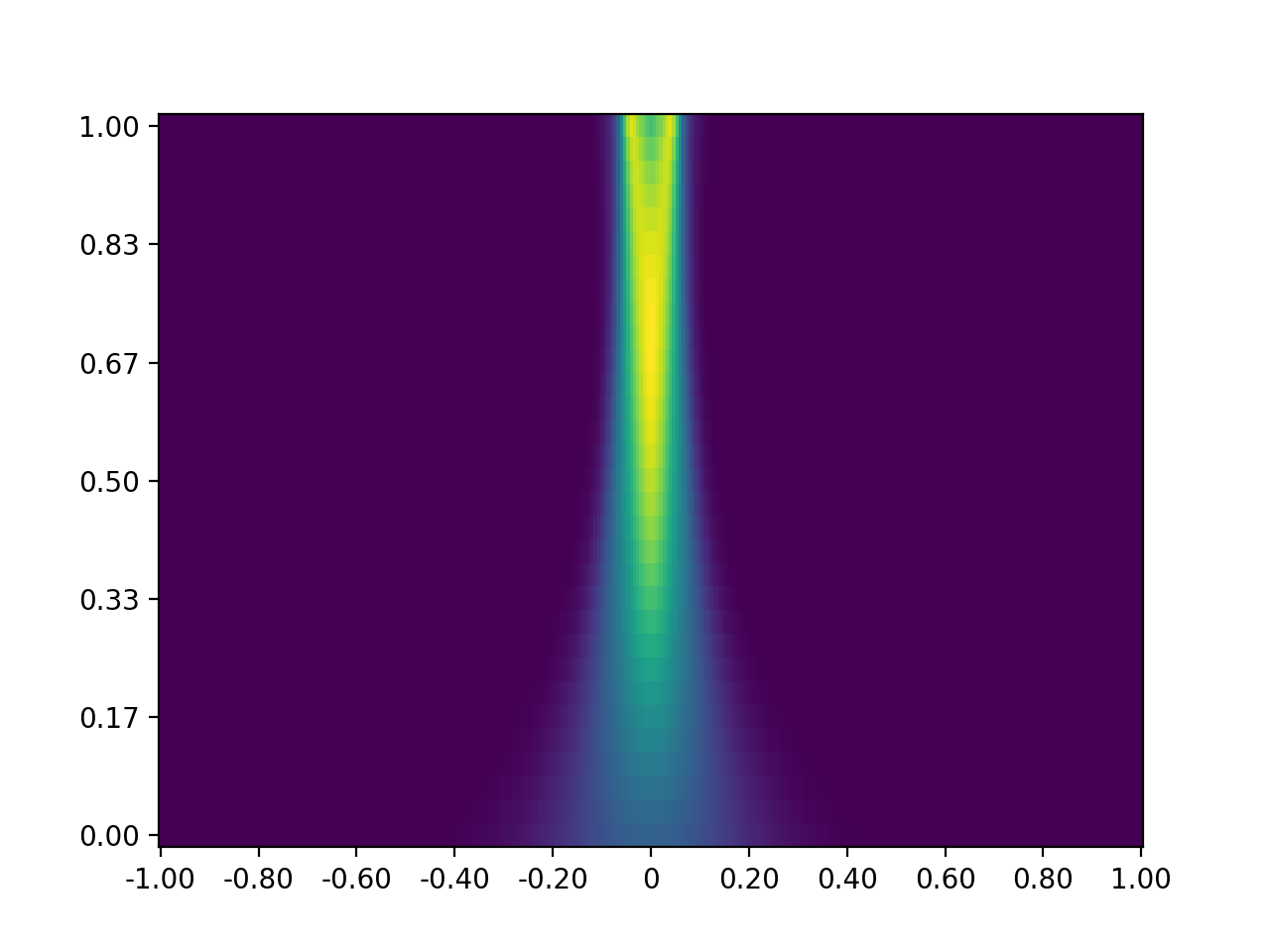}} \\
\subfloat[$\ov{\mathsf{M}}^4$]{\includegraphics[width=.3\textwidth]{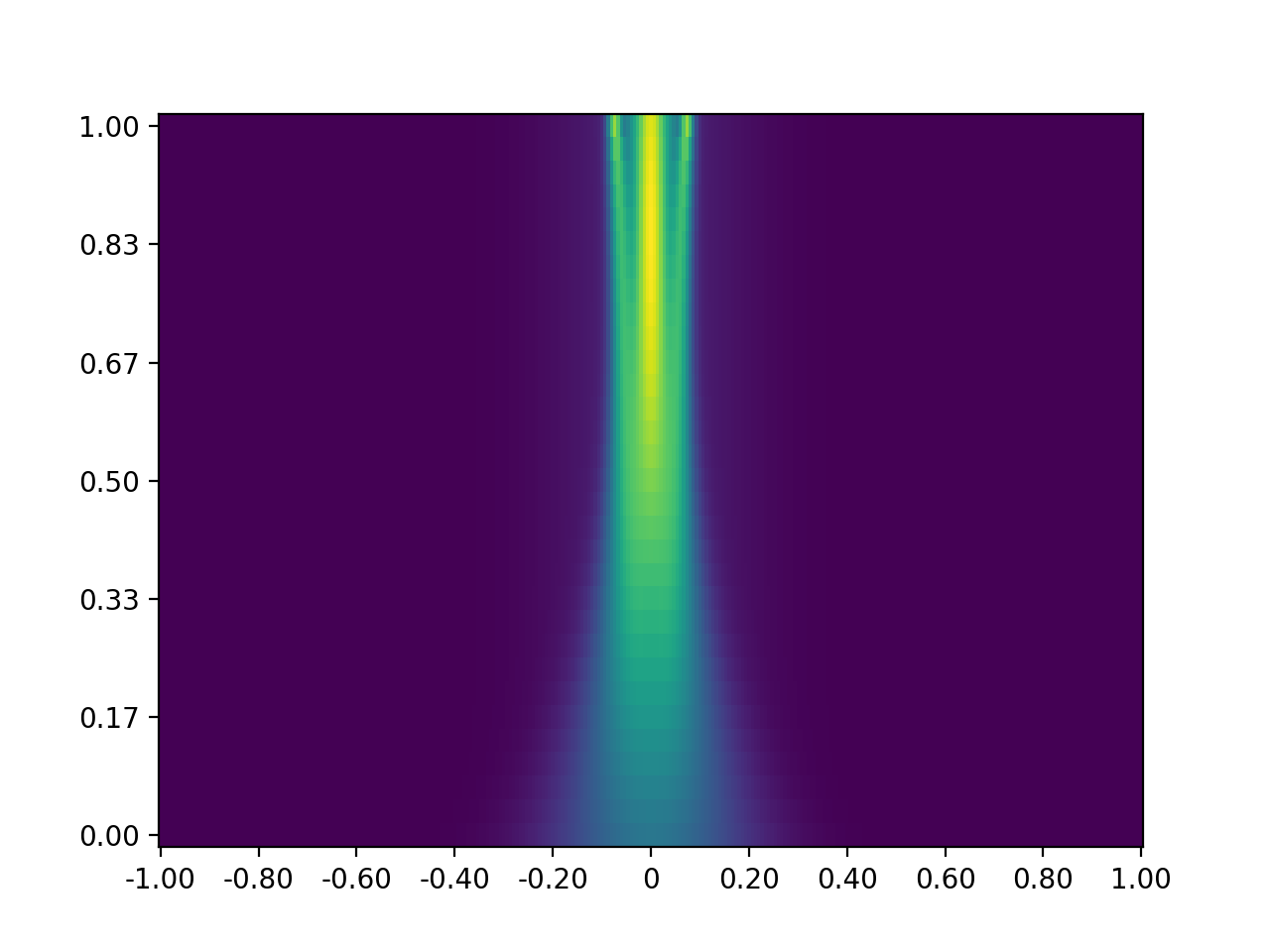}}& 
\subfloat[${\bf br}(\ov{\mathsf{M}}^4)$]{\includegraphics[width=.3\textwidth]{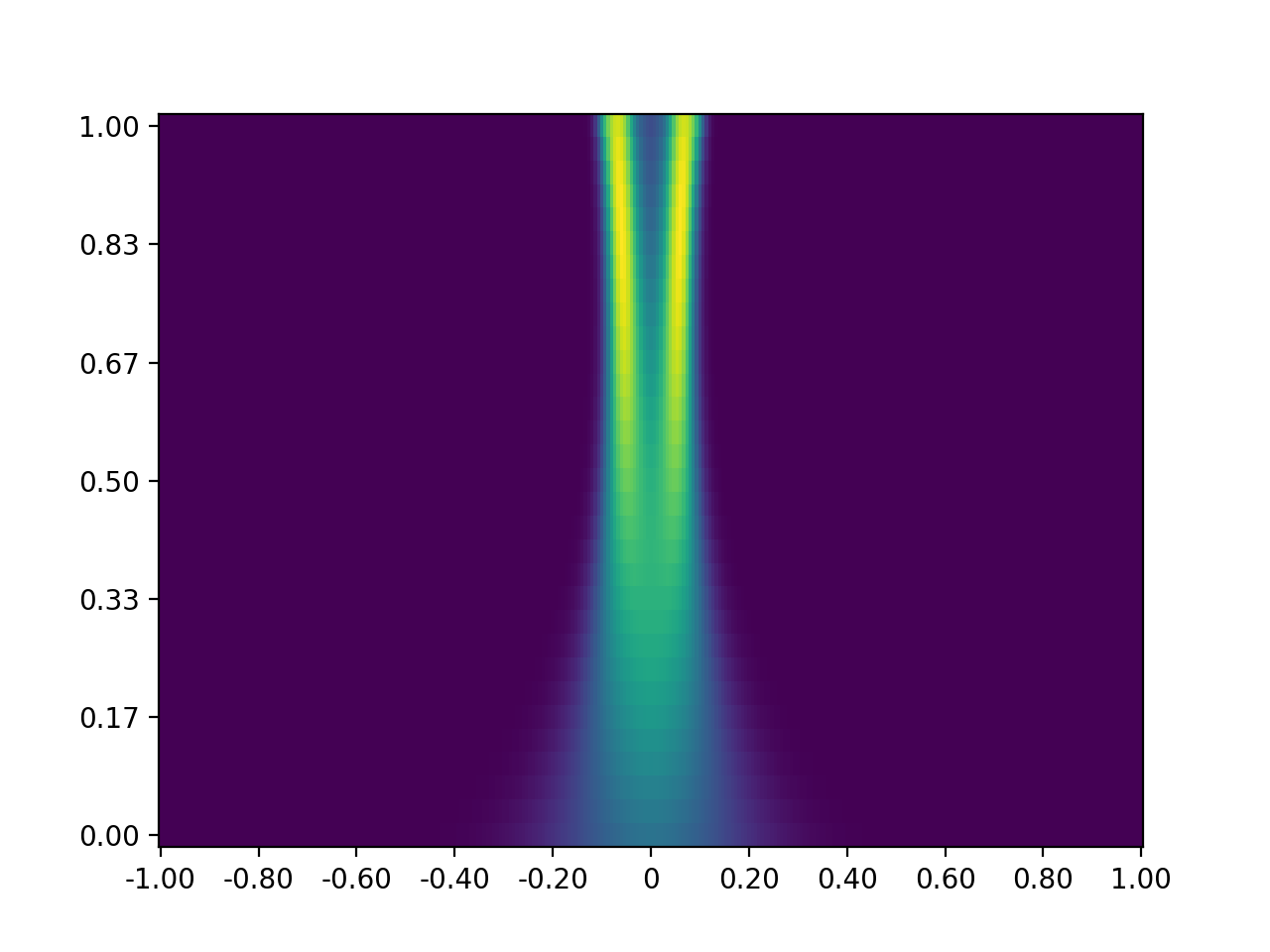}} \\
\end{tabular}
\caption{The first four iterations of Algorithm~\ref{alg:FP}, with $(\theta_1,\theta_2)=(1,1)$ and $\delta =0.1$. The figures on the left show the distributions obtained by averaging the previous guess and its best response, while, on the right, we display the best response to the current guess. The abscissa and ordinate axes represent position and time, respectively.}
\label{fictitious_play_working}
\end{figure}

\subsection{Example 2} 
\label{subsection_example_2}
We consider here a two-dimensional deterministic MFG, where a typical agent controls its acceleration (see~\cite{MR4102464, MR4132067}). We take $T=1$ as time horizon, 
\be 
\label{A_and_B_example_2}
A(t,x)=\left(\ba{cc}
0&0\\
1&0\ea\right),\quad \text{and}\quad B(t,x)=\left(\ba{c} 1\\ 0\ea\right) \quad \text{for all } (t,x)\in [0,T]\times \RR^2,
\ee
in~\eqref{ecuacion_controlada}, which, for $\alpha\in L^{2}([0,T];\RR)$, yields the controlled dynamics
$$
\left\{\ba{rcl} 
\dot{\gamma}_1(t)&=&\alpha(t),\\[6pt]
\dot{\gamma}_2(t)&=&\gamma_1(t),
\ea \right.\quad \text{for a.e. } t\in (0,1).
$$
In the above dynamics, $\gamma_1(t)$ and $\gamma_2(t)$ represent, respectively, the velocity and the position of an agent at time $t\in [0,T]$. As in the previous example, the cost function is decomposed as in {\bf(H5)}{\rm(i)}. More precisely, let $\sigma>0$, define $\rho_\sigma$ by~\eqref{rho_sigma}, and, given $\theta_1$, $\theta_2\geq 0$, let us take
\be 
\label{cost_functionals_second_example}
\ba{rcl}
 \ds f(t,x,\mu)&=&\theta_1 (\rho_\sigma\star \mu_2)(\mathrm{x}_2) \quad \text{for all } (t,x,\mu)\in [0,T]\times\RR^2\times \P_1(\RR^2), \\[6pt]
\ds \ell(t,a,x,\mu) &=& \ds \frac{|a|^2}{2}+(\mathrm{x}_2-0.3)^2+ f(t,x,\mu) \quad \text{for all } (t,a,x,\mu)\in [0,T]\times\RR\times\RR^2\times \P_1(\RR^2), \\[8pt] 
\ds g(x,\mu)&=&\theta_2 (\rho_\sigma\star \mu_2)(\mathrm{x}_2) \quad \text{for all } (x,\mu)\in\RR^2\times \P_1(\RR^2), 
\ea
\ee
where $x=(\mathrm{x}_1,\mathrm{x}_2)$ and  $\mu_2:=\pi_2\sharp \mu \in \P_1(\RR)$, with  $\pi_2:\RR^2\to \RR$ being defined as $\pi_2(x)=\mathrm{x}_2$ for all $x\in\RR^2$.  We consider  an absolutely continuous initial distribution $m_0\in\P_1(\RR^2)$ in product form,  given by
$$
\dd m_0(x)=\left(\frac{1}{\text{\footnotesize $0.04$}} \mathbb{I}_{[-0.02,0.02]}(\mathrm{x_1})\dd \mathrm{x}_1 \right)\otimes \left(\mathbb{I}_{[-1,1]}(\mathrm{x}_2)\frac{e^{-\mathrm{x}_2^2/0.001}}{\int_{-1}^1 e^{-y^2/0.001}\dd y}\dd \mathrm{x_2}\right).
$$
In this framework, $f$ and $g$ model some costs associated to the aversion of a typical player to crowded areas, while the additional terms in $\ell$ penalize high acceleration and deviation from the target position $\ov{\mathrm{x}}_2=0.3$. 
\begin{remark}
\label{quadratic_term_example_2} {\rm(i)} The presence of the quadratic term $(\mathrm{x}_2-0.3)^2$ in the definition of $\ell$ implies that {\bf(H1)} does not hold. Moreover, in this case, the corresponding value function~\eqref{semidiscrete-scheme} of the discrete time optimal control problem is merely locally Lipschitz. However, using that $B$ in~\eqref{A_and_B_example_2} does not depend on $x$, one can still show that the conclusions of Theorem~\ref{main_result} remain true. Extensions of this type will be the subject of a forthcoming note.
\smallskip\\
{\rm(ii)} As an alternative to working with this quadratic cost term,  one could replace it by a Lipschitz and bounded function of the state variable, which coincides with the quadratic function on an arbitrary interval of $\RR$ centered at $0.3$. Under this modification, our dataset satisfies {\bf(H1)}-{\bf(H4)}, and {\bf(H5)}{\rm(i)}. However, for the sake of simplicity and in view of {\rm(i)}, we have chosen to maintain the original quadratic term in our numerical simulations. 
\end{remark}

The purpose of the following result is to show that the coupling terms $f$ and $g$ are monotone. Thus, by~\cite[Theorem~3.2]{MR4030259}, we obtain the convergence of the fictitious play method when applied to Problem~\ref{MFG-SL_discrete_scheme_n-degenerate}.

\begin{lemma}
\label{monotonia_ej_2}
The functions $f$ and $g$ defined in~\eqref{cost_functionals_second_example} satisfy {\bf(H5)}{\rm(ii)}. 
\end{lemma}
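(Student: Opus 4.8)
The plan is to verify the monotonicity inequality~\eqref{monotonia_Phi} directly, for $\Phi=f(t,\cdot,\cdot)$ (with $t\in[0,T]$ fixed) and for $\Phi=g$, exploiting the fact that both functions in~\eqref{cost_functionals_second_example} have the common form $\Phi(x,\mu)=\theta(\rho_\sigma\star\mu_2)(\mathrm{x}_2)$ with $\theta\geq 0$ and $\mu_2=\pi_2\sharp\mu$. Since $\Phi(x,\mu)$ depends on $x$ only through its second coordinate $\mathrm{x}_2=\pi_2(x)$, I would first reduce the two-dimensional integral to a one-dimensional one by pushing forward through $\pi_2$. Writing $\nu_1=\pi_2\sharp\mu_1$ and $\nu_2=\pi_2\sharp\mu_2$ in $\P_1(\RR)$, and using the linearity of the push-forward together with the linearity of the convolution, the integrand $(\rho_\sigma\star\nu_1)(\mathrm{x}_2)-(\rho_\sigma\star\nu_2)(\mathrm{x}_2)$ is a function of $\mathrm{x}_2$ alone, so the change-of-variables formula for the push-forward yields
$$
\int_{\RR^2}\big(\Phi(x,\mu_1)-\Phi(x,\mu_2)\big)\dd(\mu_1-\mu_2)(x)=\theta\int_\RR\big(\rho_\sigma\star\eta\big)(y)\,\dd\eta(y),
$$
where $\eta:=\nu_1-\nu_2$ is a finite signed measure on $\RR$ with $\eta(\RR)=0$.

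The key step is then to show that $\int_\RR(\rho_\sigma\star\eta)\,\dd\eta\geq 0$, that is, that the Gaussian kernel $(y,z)\mapsto\rho_\sigma(y-z)$ is positive definite. I would establish this by Fourier analysis. Since $\rho_\sigma$ is bounded and $\eta$ has finite total variation, Fubini's theorem gives
$$
\int_\RR(\rho_\sigma\star\eta)(y)\,\dd\eta(y)=\int_\RR\int_\RR\rho_\sigma(y-z)\,\dd\eta(z)\,\dd\eta(y),
$$
and substituting the inversion formula $\rho_\sigma(w)=\frac{1}{2\pi}\int_\RR e^{-\sigma^2\omega^2/2}e^{i\omega w}\dd\omega$, valid because the Fourier transform of $\rho_\sigma$ is the nonnegative Gaussian $\omega\mapsto e^{-\sigma^2\omega^2/2}$, and interchanging the order of integration once more, I would obtain
$$
\int_\RR(\rho_\sigma\star\eta)(y)\,\dd\eta(y)=\frac{1}{2\pi}\int_\RR e^{-\sigma^2\omega^2/2}\,\big|\hat\eta(\omega)\big|^2\,\dd\omega\geq 0,
$$
where $\hat\eta(\omega)=\int_\RR e^{-i\omega y}\dd\eta(y)$ and the modulus square arises because $\eta$ is a real signed measure, so that $\int e^{i\omega y}\dd\eta(y)=\overline{\hat\eta(\omega)}$. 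As $\theta\geq 0$, this proves~\eqref{monotonia_Phi} for both $f(t,\cdot,\cdot)$ and $g$, which is exactly {\bf(H5)}{\rm(ii)}.

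The main obstacle is purely technical: justifying the two applications of Fubini's theorem. Both are harmless, since $\rho_\sigma$ is bounded and $\eta$, being the difference of two probability measures, has finite total variation, so the first double integral is absolutely convergent; the second interchange is legitimate because $\omega\mapsto e^{-\sigma^2\omega^2/2}$ is integrable and the characters $e^{i\omega w}$ are bounded. Finally, I would remark that although $\Phi$ formally carries the extra argument $t$, this value never enters the computation above, so the inequality holds for every $t\in[0,T]$, as required.
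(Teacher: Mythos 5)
Your proof is correct, and your first step --- reducing the two-dimensional integral to the one-dimensional quadratic form $\theta\int_{\RR}(\rho_\sigma\star\eta)\,\dd\eta$ with $\eta=\pi_2\sharp\mu_1-\pi_2\sharp\mu_2$ --- coincides with the first half of the paper's argument. Where you diverge is in how you certify that this quadratic form is nonnegative: you pass to the Fourier side, writing $\rho_\sigma$ via its inversion formula and arriving at $\frac{1}{2\pi}\int_{\RR} e^{-\sigma^2\omega^2/2}\,|\hat\eta(\omega)|^2\,\dd\omega\ge 0$, i.e.\ you invoke the positive definiteness of the Gaussian kernel in the spirit of Bochner's theorem. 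The paper instead takes the ``square root'' of the kernel in physical space: it uses the convolution identity $\rho_\sigma=\rho_{\bar\sigma}\star\rho_{\bar\sigma}$ with $\bar\sigma=\sqrt{\sigma^2/2}$ and Fubini's theorem to rewrite the quadratic form as $\int_{\RR}\left[\left(\rho_{\bar\sigma}\star\eta\right)(y)\right]^2\dd y$, which is manifestly nonnegative. The two arguments are dual to one another (the Gaussian is a convolution square precisely because its Fourier transform is a nonnegative function), and both rest on the same routine Fubini justifications, which you handle correctly since $\eta$ has finite total variation and the relevant kernels are bounded and integrable. Your route is marginally more general --- it applies verbatim to any bounded integrable kernel with nonnegative Fourier transform --- while the paper's version avoids Fourier analysis entirely and needs only the elementary Gaussian convolution identity. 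In either case the conclusion, and its application to both $f(t,\cdot,\cdot)$ and $g$ since $\theta_1,\theta_2\ge 0$ and the variable $t$ plays no role, is sound.
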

\begin{proof}
Let $\mu, \bar\mu\in \P_1(\cR^2)$ and set $\mu_2=\pi_2\sharp\mu$ and $\bar\mu_2=\pi_2\sharp\bar\mu$. Since $\rho_\sigma= \rho_{\bar{\sigma}} \star \rho_{\bar{\sigma}}$, with $\bar{\sigma}=\sqrt{\sigma^2/2}$, Fubini's theorem implies that
$$
\ba{l}
\ds \int_{\cR^2} \left[ \left(\rho_\sigma \star \mu_2\right) (\mathrm{x}_2) - \left(\rho_\sigma \star \bar\mu_2\right) (\mathrm{x}_2)  \right] \dd (\mu-\bar\mu)(x)\\[6pt]
\hspace{2cm} = \ds \int_{\cR^2} \left[ (\rho_{\bar\sigma}\star \rho_{\bar\sigma} \star \mu_2) (\pi_2(x)) - (\rho_{\bar\sigma}\star \rho_{\bar\sigma} \star \bar\mu_2) (\pi_2(x))   \right] \dd (\mu-\bar\mu)(x)\\[12pt]
\hspace{2cm} = \ds  \int_{\cR} \left[ (\rho_{\bar\sigma}\star \rho_{\bar\sigma} \star \mu_2) (\mathrm{x}_2) - (\rho_{\bar\sigma}\star \rho_{\bar\sigma} \star \bar\mu_2) (\mathrm{x}_2)   \right]\dd (\mu_2-\bar\mu_2)(\mathrm{x}_2)\\[12pt]
\hspace{2cm} =  \ds \int_{\cR} \int_{\cR} \rho_{\bar\sigma}(\mathrm{x}_2-y) (\rho_{\bar\sigma} \star (\mu_2-\bar\mu_2))(y) \, \dd y \,  \dd (\mu_2-\bar\mu_2)(\mathrm{x}_2)\\[12pt]
\hspace{2cm} =\ds  \int_{\cR} \left[\left(\rho_{\bar\sigma} \star (\mu_2-\bar\mu_2)\right) (y) \right]^2 \dd y,
\ea
$$
which is nonnegative. 
\end{proof}

In the numerical simulations,  we consider the same values for $\sigma$, $\Delta t$, $\Delta x$, $\eps$, and $(\theta_1,\theta_2)$ as those in Example~\ref{subsection_example_1}.  We also keep the same tolerance parameters and fix the discretization of the initial distribution $m_0$, described at the beginning of  Section~\ref{sec:main_result}, as initial guess in Algorithm~\ref{alg:FP} for all the time marginals. Table~\ref{tabla2} provides the number of iterations needed for attaining the desired accuracies and Figures~\ref{first_figure_example_2} and~\ref{second_figure_example_2} display the evolution of the distribution of the velocities and positions of the agents at equilibrium for the two values of $(\theta_1,\theta_2)$ and tolerance parameter $\delta = 0.001$. Observe that, as expected, the presence in the cost functional of a term modeling aversion of a typical agent to crowded regions at the final time $T$ , i.e. $\theta_2\neq 0$, has an important impact on the final distribution.
\begin{table}[h]
\begin{center}
\vspace{0.1cm}
\begin{tabular}{|c|c|c|}
	\hline
\; &\quad {\small Test 1: $\theta_1= 1$, $\theta_2 = 0$} \quad &\quad {\small Test 2: $\theta_1= 1$, $\theta_2 = 1$ }\\[2pt]
	\hline
Tolerance & No. of iterations & No. of iterations \\[2pt]
\hline
$\delta =0.1$ &$n= 25$ & $n=30$\\
$\delta =0.01$ & $n=11$& $n=10$\\
$\delta = 0.001$ & $n=10$ & $n=10$\\
\hline
\end{tabular}
\end{center}
\caption{Number of iterations to obtain the desired accuracies.}
\label{tabla2}
\end{table}
\begin{figure}[h]
\begin{center}
\includegraphics[width=.5\textwidth]{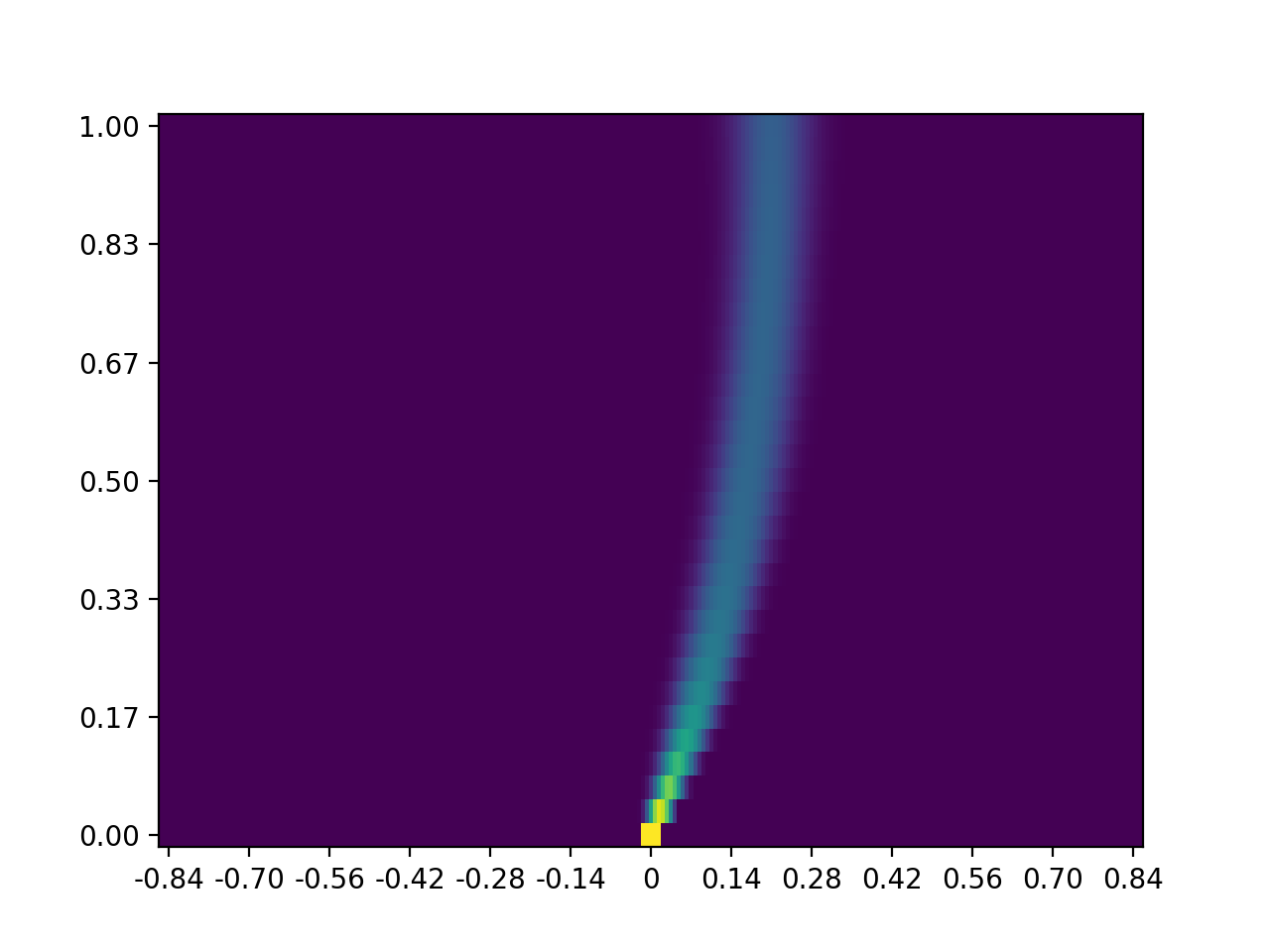}\includegraphics[width=.5\textwidth]{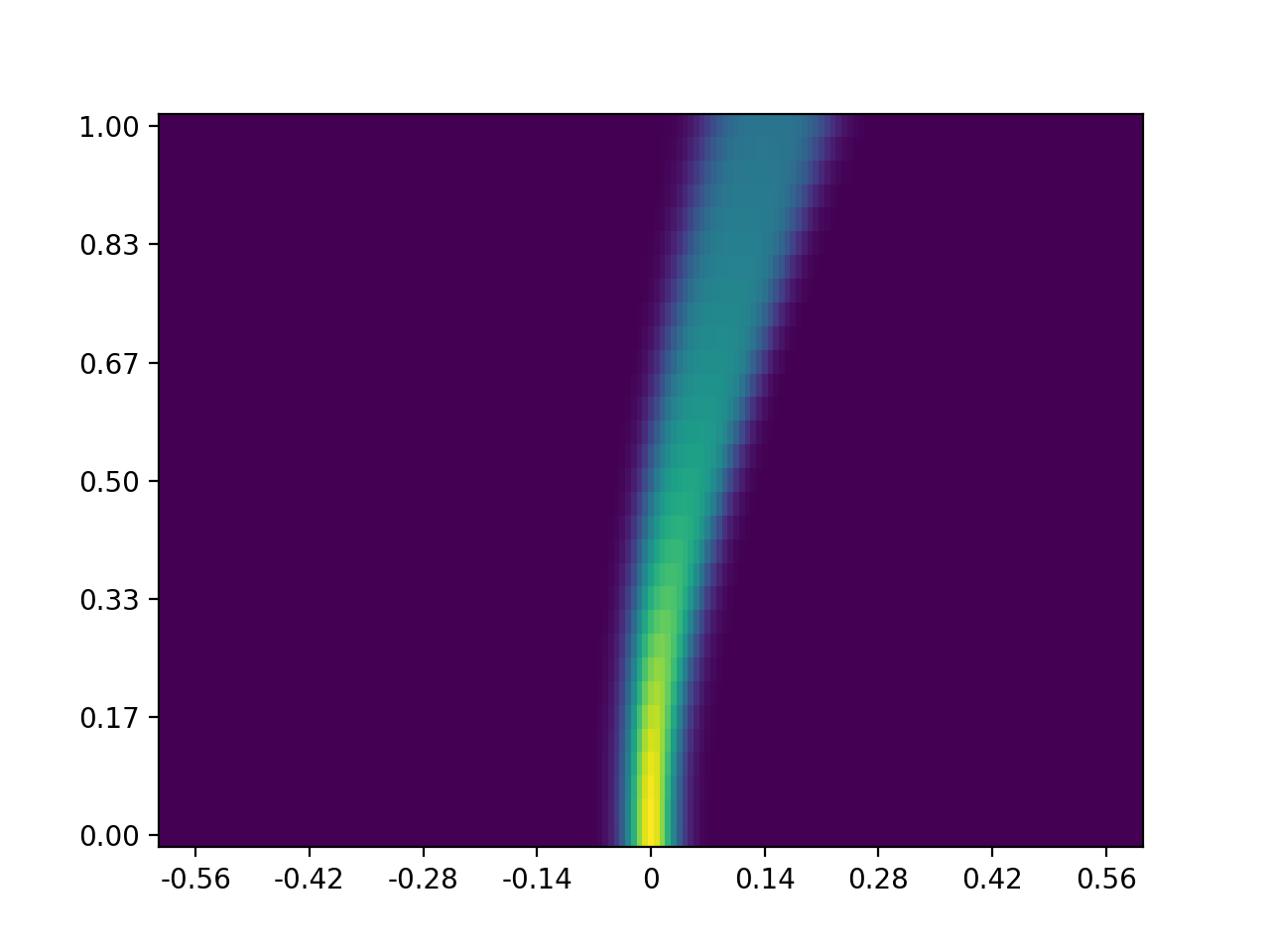}
\end{center}
\caption{Test 1:  Distribution of the velocities (left) and positions (right) for $\theta_1= 1$, $\theta_2=0$ and tolerance parameter $\delta=0.001$. In both figures  the ordinate axes represent time, while the abscissa axis represents velocity on the left figure and position on the right one.}
\label{first_figure_example_2}
\end{figure}
\begin{figure}[h]
\begin{center}
\includegraphics[width=.5\textwidth]{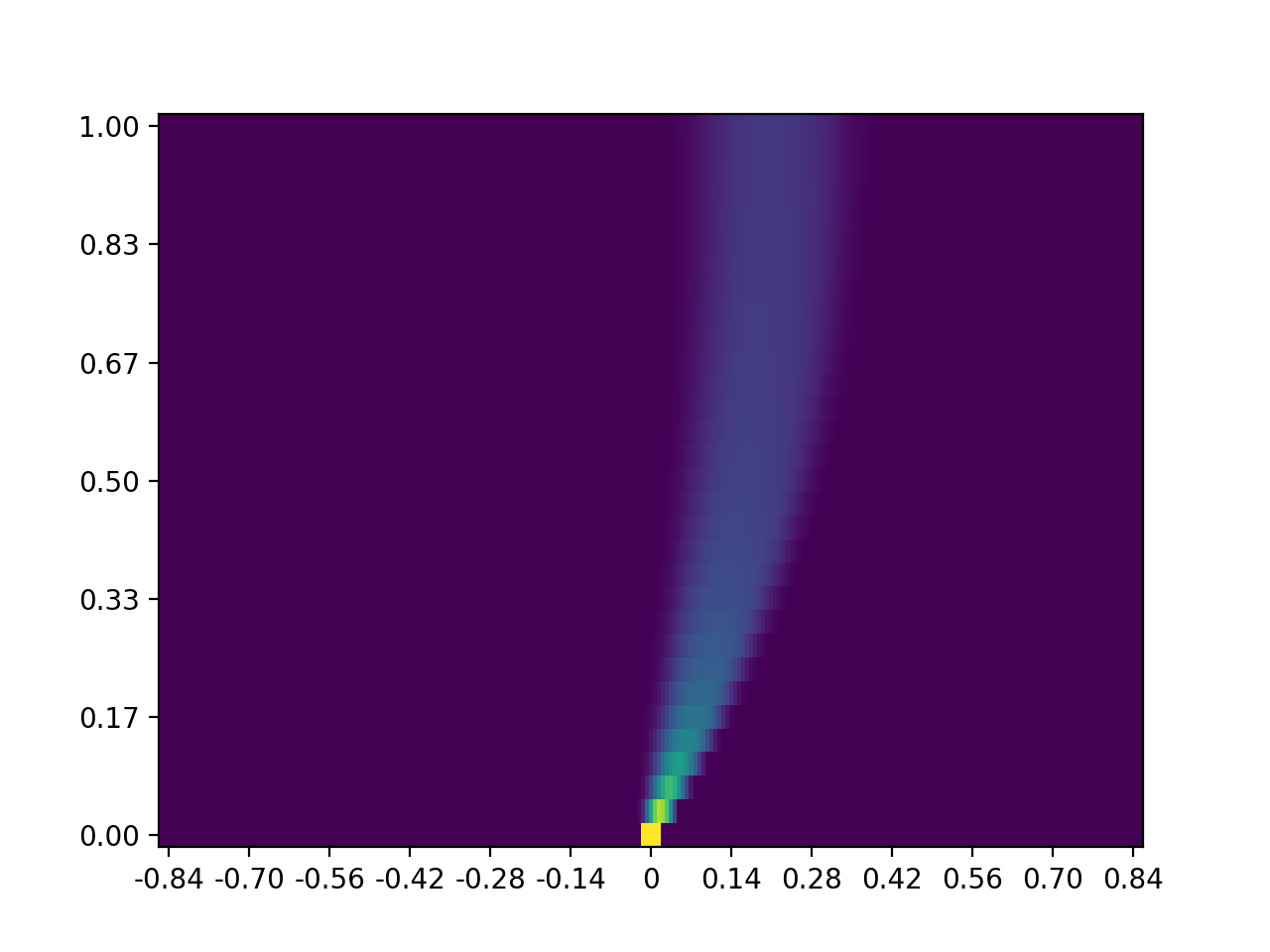}\includegraphics[width=.5\textwidth]{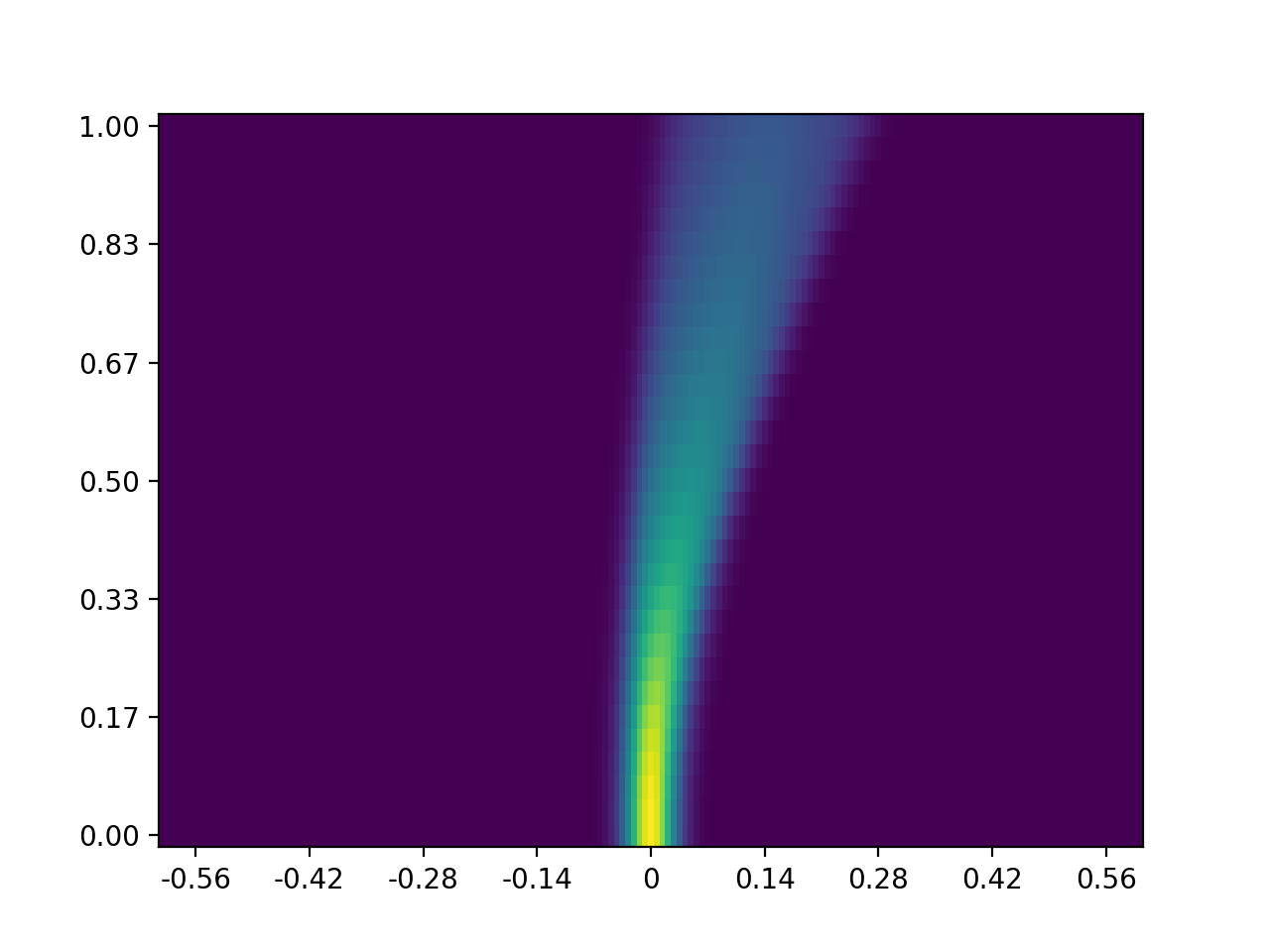}
\end{center}
\caption{Test 2:  Distribution of the velocities (left) and positions (right) for $\theta_1= 1$, $\theta_2=1$ and tolerance parameter $\delta=0.001$. In both figures  the ordinate axes represent time, while the abscissa axis represents velocity on the left figure and position on the right one.}
\label{second_figure_example_2}
\end{figure}

\section*{Appendix I}

In this section we prove some technical properties of the semi-discrete value function $v_{k}\colon \RR^d\to \RR$ ($k\in \I$) introduced in Section \ref{semi_discrete_section}.  In what follows we assume that {\bf(H1)}, {\bf(H2)} and {\bf(H3)} are in force.  

We start showing the existence of optimal controls for the problem defined by $v_k$ in \eqref{semidiscrete-scheme}. 
\begin{lemma}   
\label{lem:existencia-control-dis} 
Given  $k \in \I^{*}$ and $x\in\cR^d$, there exists  $\bar\alpha\in \A_{k}$ such that $v_k( x)= J_{k, x}(\bar\alpha)$. In addition, there exists $\tilde{C}>0$, independent of $\Delta t$, $m$, $k$, and $x$, such that any optimal solution $\tilde\alpha\in\A_k$ satisfies
\be  
\label{eq:cota-alpha-dis}
\Delta t \sum_{j=k}^{N_t-1}|\tilde\alpha_j|^p\leq \tilde{C}. 
\ee
\end{lemma}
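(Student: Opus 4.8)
The plan is to split the statement into the (routine) existence part and the (more delicate) a priori bound, the real content being the independence of the constant on $x$. First I would settle existence. Identifying $\A_{k}$ with $(\RR^{r})^{N_t-k}$, the map $\alpha\mapsto J_{k,x}(\alpha)$ is continuous, since the discrete flow \eqref{def:dis-state} depends continuously on $\alpha$ and $\ell$, $g$ are continuous. Moreover, {\bf(H1)}{\rm(i)} and {\bf(H2)} give the lower bound
$$
J_{k,x}(\alpha)\ge \underline{\ell}\,\Delta t\sum_{j=k}^{N_t-1}|\alpha_j|^p - C_\ell T + c_g,
$$
where I used $\Delta t(N_t-k)\le T$. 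Hence $J_{k,x}$ is coercive on the finite-dimensional space $\A_{k}$, its sublevel sets are compact, and it attains its infimum; this produces a minimizer $\bar\alpha$, and the estimate below will be carried out for an arbitrary minimizer $\tilde\alpha$.

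For the bound, let $\tilde\alpha\in\A_k$ be any minimizer with associated trajectory $\tilde\gamma$, set $S:=\Delta t\sum_{j=k}^{N_t-1}|\tilde\alpha_j|^p$, and let $\gamma^0$ be the trajectory driven by the zero control. The idea is to compare $v_k(x)=J_{k,x}(\tilde\alpha)$ with $J_{k,x}(0)$. Bounding the running cost of $\tilde\alpha$ from below and that of the zero control from above via {\bf(H1)}{\rm(i)} (note $\ell(t_j,0,\cdot,\cdot)\le C_\ell$), the inequality $J_{k,x}(\tilde\alpha)\le J_{k,x}(0)$ yields
$$
\underline{\ell}\,S \le 2C_\ell T + g(\gamma^0_{N_t},m(T)) - g(\tilde\gamma_{N_t},m(T)) \le 2C_\ell T + L_g\,\bigl|\gamma^0_{N_t}-\tilde\gamma_{N_t}\bigr|,
$$
the last step using the Lipschitz bound in {\bf(H2)}. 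The crucial point, and the reason the constant can be taken independent of $x$, is that only the \emph{difference} of the two trajectories enters: both start at $x$, so the contribution $|x|$ cancels, whereas bounding each of $|\gamma^0_{N_t}|$ and $|\tilde\gamma_{N_t}|$ separately would produce an $|x|$-dependent estimate.

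To close the estimate I would set $e_j=\tilde\gamma_j-\gamma^0_j$ and use {\bf(H3)}{\rm(i)--(ii)} to get $|e_{j+1}|\le(1+\Delta t L_A)|e_j|+\Delta t\,C_B|\tilde\alpha_j|$ with $e_k=0$; the discrete Gr\"onwall lemma then gives $|e_{N_t}|\le e^{L_A T}C_B\,\Delta t\sum_{j=k}^{N_t-1}|\tilde\alpha_j|$, and H\"older's inequality bounds $\Delta t\sum_{j=k}^{N_t-1}|\tilde\alpha_j|\le T^{1-1/p}S^{1/p}$. Combining these gives $\underline{\ell}\,S\le c_1+c_2 S^{1/p}$ for constants $c_1,c_2$ depending only on $T,C_\ell,L_g,L_A,C_B,p$, and in particular independent of $\Delta t$, $m$, $k$, and $x$. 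Since $p>1$, Young's inequality yields $c_2 S^{1/p}\le \tfrac{\underline{\ell}}{2}S+C$, whence $\tfrac{\underline{\ell}}{2}S\le c_1+C$, i.e. $S\le \tilde{C}:=2(c_1+C)/\underline{\ell}$, which is exactly \eqref{eq:cota-alpha-dis}. The only genuine obstacle is securing this independence of $x$; once one estimates the trajectory difference rather than the trajectories themselves, the remaining steps are routine applications of the discrete Gr\"onwall, H\"older, and Young inequalities.
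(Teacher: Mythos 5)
Your proof is correct and follows essentially the same route as the paper's: compare an optimal control against the zero control, use {\bf(H1)}{\rm(i)} and {\bf(H2)} so that only the \emph{difference} of the two trajectories enters, control that difference by the discrete Gr\"onwall lemma, and close with H\"older and Young. The only (immaterial) difference is that you obtain existence first from coercivity of $J_{k,x}$ on the finite-dimensional space $\A_k$, whereas the paper derives the same a priori bound along a minimizing sequence and extracts an accumulation point.
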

\begin{proof}
Given   $k \in \I^{*}$ and $x\in\cR^d$, let $(\alpha^n)_{n\in\NN}\subset \A_k$ be a minimizing sequence for $J_{k,x}$. Let $(\gamma^n)_{n\in\NN}\subset \Gamma_{k,x}$ be the  sequence of states associated with $(\alpha^n)_{n\in\NN}$ via \eqref{def:dis-state} and let $\bar\gamma^0\in\Gamma_{k,x}$ be the state associated with the null control. By definition of minimizing sequence, {\bf(H1)}{\rm(i)}, and {\bf(H2)}, for any $\delta>0$, there exists $n_0\in \NN$ such that   
\be  \label{eq:est-control-i}
\Delta t \underline{\ell}\sum_{j=k}^{N_t-1}\left|\alpha^n_j\right|^p-(T-t_k)C_\ell\leq J_{k,x}(0)+\delta -g(\gamma^n_{N_t})\leq (T-t_k)C_\ell+\delta+L_g\left|\bar\gamma^0_{N_t} - \gamma^n_{N_t}\right|,
\ee
for all $n\geq n_0$.  Let us estimate $\left|\bar\gamma^0_{N_t} - \gamma^n_{N_t}\right|$. For every $j=k, \dots, N_t-1$ we have 
$$
\ba{lll}
\left|\bar\gamma^0_{j+1} - \gamma^n_{j+1}\right|&\leq& \left|\bar\gamma^0_{j} - \gamma^n_{j}\right|+\Delta t \left|A(t_j,\bar\gamma^0_{j}) -A(t_j, \gamma^n_{j})\right|+ \Delta t\left|B(t_j, \gamma^n_{j})\alpha^n_j\right|\\[10pt]
\;&\leq&\left( 1+\Delta t L_A\right)\left|\bar\gamma^0_{j} - \gamma^n_{j}\right|+\Delta t C_B\left|\alpha^n_j\right|.
\ea
$$
Since $\bar\gamma^0_{k} =\gamma^n_{k}$, by the discrete Gr\"onwall's lemma, there exists $C>0$ such that 
\be
\label{conseq_gronwall_discreto}
\max_{j=k,\dots, N_t}\left|\bar\gamma^0_{j} - \gamma^n_{j}\right| \leq C \Delta t\sum_{j=k}^{N_t-1}\left|\alpha^n_j\right|.
\ee
Thus, by Young's inequality, for every $\eta>0$ there exists $C_\eta>0$ such that 
$$
 L_gC \Delta t\sum_{j=k}^{N_t-1}\left|\alpha^n_j\right|\leq C_\eta+\eta\Delta t\sum_{j=k}^{N_t-1}\left|\alpha^n_j\right|^p. 
$$
Taking $\eta<\underline{\ell}$ and combining the above equation with \eqref{eq:est-control-i} and \eqref{conseq_gronwall_discreto},  we deduce the existence of $\tilde{C}>0$, independent of $\Delta t$, $m$, $k$, and $x$, such that 
$$
\Delta t \sum_{j=k}^{N_t-1}|\alpha_j^n|^p\leq \tilde{C}. 
$$
Therefore,  there exists at least one accumulation point $\bar\alpha\in\A_k$ of $(\alpha^n)_{n\in\NN}$ and, by the continuity assumptions in {\bf(H1)-(H3)}, we conclude that $v_k(x)= J_{k, x}(\bar\alpha)$. Finally, if $\tilde{\alpha}$ is any other optimal control for the problem defined by $v_k(x)$, the previous argument shows that \eqref{eq:cota-alpha-dis} holds.
\end{proof}
\begin{proof}[Proof of the Lipschitz property \eqref{lem:Lips-v-semidis}.]
Given $k \in \I^{*}$ and $x\in\cR^d$, let $\bar\alpha\in \A_k$ be an optimal control for $v_k(x)$ and let $\bar\gamma\in\Gamma_{k,x}$ be the associated state given by \eqref{def:dis-state}. Let $y\in\cR^d$ and let $\zeta\in \Gamma_{k,y}$ be the state associated with $\bar\alpha$ via  \eqref{def:dis-state}, 
then {\bf(H1)}{\rm(ii)} implies 
\be
\label{lipschitz_desigualdad_1}
\ba{lll}
v_k(y)-v_k(x)&\leq&\ds\Delta t\sum_{j=k}^{N_t-1}[\ell(t_j, \bar\alpha_j, \zeta_j, m(t_j))-\ell(t_j, \bar\alpha_j,  \bar\gamma_j, m(t_j))]\\[16pt]
\;&\;&+g(\zeta_{N_t},m(T))-g(\bar\gamma_{N_t},m(T))\\[6pt]
\;&\leq&\ds\left(L_\ell\left(T+\Delta t\sum_{j=k}^{N_t}|\bar\alpha_j|^p\right)+L_g\right)\max\limits_{j=k,\dots, N_t}|\zeta_j-\bar\gamma_j|.
\ea
\ee
On the other hand, for every $j=k,\dots, N_t-1$, 
$$
|\zeta_{j+1}-\bar\gamma_{j+1}|\leq \left(1+\Delta t \left[L_{A}  + L_B |\ov{\alpha}_j|\right]\right)|\zeta_{j}-\bar\gamma_{j}|.
$$
By the discrete Gr\"onwall's lemma, we have
$$
\max_{j=k,\dots, N_t}|\zeta_j-\bar\gamma_j|\leq \ds e^{\left(L_{A}T+ L_{B}\Delta t \sum\limits_{j=k}^{N_{t}-1}|\bar{\alpha}_j|\right)}|x-y|.
$$
By \eqref{eq:cota-alpha-dis}, H\"older's inequality, and \eqref{lipschitz_desigualdad_1}, there exists $L_{v}>0$, independent of $x$, $y$,  such that $v_k(x)-v_k(y)\leq L_v |x-y|$ for all $x$, $y\in \RR^d$,  which implies \eqref{lem:Lips-v-semidis}. 
\end{proof}

\begin{lemma} 
\label{lem:controles-dis-acotados}
Given  $k \in \I^*$ and $x\in\cR^d$, there exists $\alpha_{k,x}\in\cR^r$  such that
\be
\label{optimo_en_dpp}
v_k(x)=  \Delta t \ell(t_k, \alpha_{k,x}, x,m(t_k))+v_{k+1}\left(x+\Delta t[A(t_k, x)+ B(t_k, x)\alpha_{k,x}]\right).
\ee
Moreover, there exists $\widehat{C}>0$, independent of $\Delta t$, $m$, $k$, and $x$, such that
\be
\label{widehatC}
|\alpha_{k,x}|\leq \widehat{C}.
\ee
\end{lemma}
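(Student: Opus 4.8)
The plan is to work directly with the dynamic programming equation \eqref{semidiscrete-scheme-dpp} and to establish two things: first, that the one-step minimization defining $v_k(x)$ is attained, and second, that any minimizer is bounded by a constant depending only on the structural data. For the existence, I would introduce the one-step cost
$$
\Phi_{k,x}(\alpha)= \Delta t\, \ell(t_k,\alpha,x,m(t_k))+v_{k+1}\big(x+\Delta t[A(t_k,x)+B(t_k,x)\alpha]\big),\qquad \alpha\in\RR^r,
$$
and verify that it is continuous and coercive. Continuity follows from the continuity of $\ell$ in {\bf(H1)} together with the Lipschitz property \eqref{lem:Lips-v-semidis} of $v_{k+1}$. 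Coercivity follows by combining the lower bound $\ell(t_k,\alpha,x,m(t_k))\geq \underline{\ell}|\alpha|^p-C_\ell$ from {\bf(H1)}{\rm(i)} with the estimate
$$
v_{k+1}\big(x+\Delta t[A(t_k,x)+B(t_k,x)\alpha]\big)\geq v_{k+1}\big(x+\Delta t A(t_k,x)\big)-L_v\,\Delta t\,C_B|\alpha|,
$$
obtained from \eqref{lem:Lips-v-semidis} and {\bf(H3)}{\rm(ii)}; since $p>1$, the $|\alpha|^p$ term dominates and $\Phi_{k,x}(\alpha)\to+\infty$ as $|\alpha|\to\infty$. The Weierstrass theorem then yields a minimizer $\alpha_{k,x}$, which gives \eqref{optimo_en_dpp}. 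Alternatively, existence can be read off from Lemma~\ref{lem:existencia-control-dis}, since the first coordinate of an optimal control sequence for $v_k(x)$ is an optimal one-step control.

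For the uniform bound, the key idea is to compare the optimal value $\Phi_{k,x}(\alpha_{k,x})=v_k(x)$ with the value obtained using the null control $\alpha=0$. Optimality of $\alpha_{k,x}$ gives
$$
\Delta t\,\ell(t_k,\alpha_{k,x},x,m(t_k))+v_{k+1}\big(x+\Delta t[A(t_k,x)+B(t_k,x)\alpha_{k,x}]\big)\leq \Delta t\,\ell(t_k,0,x,m(t_k))+v_{k+1}\big(x+\Delta t A(t_k,x)\big).
$$
Using {\bf(H1)}{\rm(i)} to bound $\ell(t_k,\alpha_{k,x},\cdot)$ from below by $\underline{\ell}|\alpha_{k,x}|^p-C_\ell$ and $\ell(t_k,0,\cdot)$ from above by $C_\ell$, and using \eqref{lem:Lips-v-semidis} together with {\bf(H3)}{\rm(ii)} to bound the difference of the $v_{k+1}$ terms by $L_v\,\Delta t\,C_B|\alpha_{k,x}|$, I would obtain
$$
\Delta t\big(\underline{\ell}|\alpha_{k,x}|^p-2C_\ell\big)\leq L_v\,\Delta t\,C_B|\alpha_{k,x}|.
$$

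The decisive point is that the factor $\Delta t$ appears on both sides and cancels, leaving the $\Delta t$-free algebraic inequality $\underline{\ell}|\alpha_{k,x}|^p\leq 2C_\ell+L_vC_B|\alpha_{k,x}|$. Since $p>1$, the superlinear left-hand side eventually overtakes the linear right-hand side, so this inequality forces $|\alpha_{k,x}|\leq \widehat{C}$ for a constant $\widehat{C}$ depending only on $\underline{\ell}$, $C_\ell$, $L_v$, $C_B$, and $p$, none of which depend on $\Delta t$, $m$, $k$, or $x$. I do not anticipate a genuine obstacle here; the only point requiring care is precisely this cancellation of $\Delta t$, which is what makes the bound uniform, together with the essential use of the superlinearity $p>1$ rather than mere convexity of $\ell$. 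Note also that the analogous bound cannot be obtained from \eqref{eq:cota-alpha-dis} alone, since that estimate only controls the time-averaged $p$-th power $\Delta t\sum_j|\tilde\alpha_j|^p$ and would degenerate as $\Delta t\to 0$ if applied to a single coordinate.
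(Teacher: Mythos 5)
Your proposal is correct and follows essentially the same route as the paper: the paper also obtains $\alpha_{k,x}$ as the first coordinate of an optimal control from Lemma~\ref{lem:existencia-control-dis} via the dynamic programming principle, and then derives the uniform bound by comparing with the null control, using \eqref{lem:Lips-v-semidis}, {\bf(H1)}{\rm(i)}, and {\bf(H3)}{\rm(ii)} to arrive at exactly the inequality $\underline{\ell}|\alpha_{k,x}|^p\leq 2C_\ell+L_vC_B|\alpha_{k,x}|$ before concluding via Young's inequality (your superlinearity argument is the same point). Your additional direct Weierstrass argument for attainment of the one-step minimum is a harmless alternative to the paper's appeal to Lemma~\ref{lem:existencia-control-dis}, and your closing remark about why \eqref{eq:cota-alpha-dis} alone would not suffice is a correct and worthwhile observation.
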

\begin{proof} Let $\bar{\alpha}$ be as in Lemma~\ref{lem:existencia-control-dis}. Then, by the dynamic programming principle, $\alpha_{k,x}=\bar{\alpha}_k$ satisfies \eqref{optimo_en_dpp} and hence
$$
\ba{l}
\Delta t\ell(t_k, \alpha_{k,x}, x, m(t_k))+v_{k+1}\left(x+\Delta t\left[A(t_k, x)+B(t_k, x)\alpha_{k,x} \right]\right)\\[6pt]
\hspace{5cm}\leq \Delta t\ell(t_k, 0, x, m(t_k))+v_{k+1}\left( x+\Delta tA(t_k, x) \right).
\ea
$$
Thus, by \eqref{lem:Lips-v-semidis},  {\bf (H1)}{\rm (i)}, and {\bf(H3)}{\rm (ii)}, we have
$$
 \underline{\ell}|\alpha_{k,x}|^p  \leq   2C_\ell+ L_v C_ B|\alpha_{k,x}|
$$
and the existence of $\widehat{C}>0$ such that \eqref{widehatC} holds follows from Young's inequality. 
\end{proof}

\begin{proof}[Proof of the Proposition \ref{prop:conv-value-function}.]   Notice that  {\bf(H1)}{\rm(i)}, with $a=0$, and {\bf(H2)} imply that  
\be
\label{cota_v_n_below_above}
-C_\ell T+c_{g}\leq v^n_{k}(x)\leq C_{\ell}T+g(\bar\gamma^{0,n}_{N_t^n},m^n(T))\quad \text{for all }n\in\NN,\,k\in\I^n,\, x\in\RR^d,
\ee
where $\bar\gamma^{0,n}$ is defined by \eqref{def:dis-state} with $\alpha_j=0$ for all $j=k, \hdots, N_t^n-1$   and $\bar{\gamma}^{0,n}_k=x$. By the definition of $\bar\gamma^{0,n}$, Remark \ref{despues_de_hipotesis}{\rm{(i)}}, and the discrete Gr\"onwall's lemma,  there exists $C>0$, independent of $n$, $k$, and $x$, such that  
\be
\label{cota_uniforme_discreta}
|\bar\gamma^{0,n}_j| \leq C(1+ |x|)\quad \text{for all }n\in\NN,\,k\in\I^n,\,j=k,\hdots,N^n_t,\;x\in \RR^d,
\ee
which, together with \eqref{cota_v_n_below_above}, implies that $v^n$ is  locally bounded, uniformly with respect to $n$.  Therefore, we can define $v^*:[0,T]\times \RR^d \to \RR$ and $v_{*}:[0,T]\times \RR^d \to \RR$ by 
$$
v^*(t,x)=\limsup_{{\tiny{\ba{c}n\to\infty\\t^n_{k(n)}\to t, \,  k(n)\in\I^n\\ x^n\in\cR^d\to x\ea}}}v^n_{k(n)}(x^n)\quad\mbox{and}\quad v_*(t,x)=\liminf_{{\tiny{\ba{c}n\to\infty\\t^n_{k(n)}\to t,  k(n)\in\I^n\\ x^n\in\cR^d\to x\ea}}}v^n_{k(n)}(x^n),
$$
for all $(t,x)\in [0,T]\times \RR^d$. By \cite[Chapter V, Lemma 1.5]{MR1484411} we have that $v^*$ and $v_*$ are upper and lower semicontinuous, respectively.  We claim that  \smallskip\\
{\bf (a)} $v^*(T,x)=v_*(T,x)=g(x,m(T))$ for all $x\in \RR^d$. \smallskip\\
{\bf (b)} $v^*$ satisfies, in the viscosity sense (see e.g. \cite[Chapter II]{MR1484411}), 
\be
\label{subsolution_hjb}
-\partial_t v^*(t,x)+ H(t,x,\nabla_xv^*(t,x),m(t))\leq 0\quad \text{for all }(t,x)\in(0,T)\times\cR^d.
\ee
{\bf (c)} $v_*$ satisfies, in the viscosity sense, 
$$
-\partial_t v_*(t,x)+ H(t,x,\nabla_{x} v_*(t,x),m(t))\geq 0\quad\text{for all }(t,x)\in (0,T)\times\cR^d.
$$
If {\bf (a)}, {\bf(b)}, and {\bf(c)} hold, then by the comparison principle \cite[Theorem 2.1]{Da-Lio:2011aa} we obtain that $v^*=v_*=v$ and  the result follows from   \cite[Chapter V, Lemma 1.9]{MR1484411}. It remains to prove the claim. The proofs of {\bf(b)} and {\bf(c)}  being analogous, we only show {\bf(a)} and {\bf(b)}. \medskip\\
{\it Proof of {\bf(a)}.} Let $x\in \RR^d$ and $(k(n), x^n)\in \I^n\times\cR^d$ be such that $(t^n_{k(n)}, x^n)\to (T,x)$ as $n\to \infty$.  Denote by  $\widehat{\A}_{k}^n$ the set defined in \eqref{def_gamma_A_k} for $k=0,\hdots,N^n_t-1$. By Lemma \ref{lem:existencia-control-dis} and Lemma \ref{lem:controles-dis-acotados}, there exists $\alpha^n\in \widehat{\A}_{k(n)}^n$ such that
$$
v^n_{k(n)}(x^n)=\Delta t_n \sum_{j=k(n)}^{N^n_t-1}\ell(t_j^n, \alpha^n_j, \gamma^n_j, m^n(t_j^n))+g\left(\gamma^n_{N^n_t}, m^n(T)\right),
$$
where $\gamma^n$ is the state associated with $\alpha^n$ via  \eqref{def:dis-state}   and $\gamma^n_{k(n)}=x^n$. By {\bf (H1){\rm(i)}} we obtain
\be    \label{eq:ineq-lim-v-n}
-C_{\ell} (T-t_{k(n)}^n)+g\left(\gamma^n_{N^n_t},m^n(T)\right)\leq v^n_{k(n)}(x^n)\leq \left(\overline{\ell}\widehat{C}^p+C_\ell\right) (T-t_{k(n)}^n)+g\left(\gamma^n_{N^n_t},m^n(T)\right).
\ee
By {\bf (H3)} and Lemma \ref{lem:controles-dis-acotados} we have
\be
\label{largo_trayectoria_completa}
\ba{lll}
\left|\gamma^n_{N^n_t}-x^n \right|&=&\ds\Delta t_n\sum_{j=k(n)}^{N^n_t-1}\left[A(t_j^n, \gamma^n_j)+B(t_j^n, \gamma^n_j)\alpha^n_j \right]\\[16pt]
\;&\leq&(T-t_{k(n)}^n)\left[C_A \left(1+ \max_{j}|\gamma^n_j|\right)+C_B\widehat{C} \right].
\ea
\ee
On the other hand, arguing as in the proof of \eqref{cota_uniforme_discreta}, there exists $C>0$, independent of $n$, such that 
$$
\max_{j}|\gamma^n_j|\leq C(1+|x^n|),
$$
which, together with \eqref{largo_trayectoria_completa} and the boundeness of $(x^{n})_{n\in \NN}$, yields 
$$
\lim_{n\to\infty}\left| \gamma^n_{N^n_t}-x^n\right|=0 
$$
and hence
$$
\lim_{n\to\infty}g\left(\gamma^n_{N^n_t}, m^n(T)\right)= g(x,m(T)).
$$
The result follows from the previous equation and \eqref{eq:ineq-lim-v-n}.\medskip\\
{\it Proof of {\bf(b)}.}   
To check that \eqref{subsolution_hjb} holds in the viscosity sense, following the lines of \cite[Proposition 4.3]{MR4030259}, let $\phi\in C^1\left([0,T]\times\cR^d\right)$ and $(t^*, x^*)\in (0,T)\times\cR^d$ be such that $v^*-\phi$ has a local maximum in $(t^*, x^*)$. Modifying $\phi$, if necessary, we can assume that  $(t^*, x^*)$  is a  strict maximum for $v^*-\phi$ on $\ov{\mathrm{B}}((t^*, x^*), \delta)$, for some $\delta>0$. Therefore,  by \cite[Chapter V, Lemma 1.6]{MR1484411} there exists a sequence $((k(n), x^n))_{n\in \NN} \subset \I^{n,*}\times\cR^d$ such that $(t^n_{k(n)}, x^n)\to (t^*, x^*)$, $v^n_{k(n)}(x^n)\to v^*(t^*, x^*)$, and 
\be
\label{maximo_local_discreto}
v^{n}_{k}(y)-\phi(t_k^n,y) \leq v^{n}_{k(n)}(x^n)-\phi(t_{k(n)}^n,x^n) \quad \text{for } (k,y)\in \I^n\times \RR^d, \; |t_{k}^n-t_{k(n)}^n|\leq \delta, \; |y-x^n|\leq \delta.
\ee
By \eqref{maximo_local_discreto}, \eqref{cota_v_n_below_above}, {\bf(H2)},  \eqref{cota_uniforme_discreta}, and modifying $\phi$ outside $\ov{\mathrm{B}}((t^*, x^*), \delta)$, if necessary, we can assume that   $\phi\in C^1\left([0,T]\times\cR^d\right)$, $\partial_t \phi$ and $\nabla \phi$ are bounded, and   $(k(n), x^n)$ is a global maximum of $\I^n \times \RR^d \ni (k,x) \mapsto v^n_{k}(x)-\phi(x)\in \RR$. In particular, for all $y\in\cR^d$, we have 
$$
v^n_{k(n)+1}\left( y\right)-v^n_{k(n)}\left(x^n\right)\leq \phi\left(t^n_{k(n)+1}, y\right)-\phi\left(t^n_{k(n)}, x^n\right).
$$
Using \eqref{semidiscrete-scheme-dpp}, we obtain
$$
\ba{lll}
0&=&\min\limits_{\alpha\in\cR^r}\Delta t_n \ell\left(t^n_{k(n)}, \alpha, x^n, m^n(t^n_{k(n)})\right)+v^n_{k(n)+1}\left( x^n+\Delta t_n[A(t^n_{k(n)}, x^n)+B(t^n_{k(n)}, x^n)\alpha]\right)\\[8pt]
\; & \; & -v^n_{k(n)}\left(x^n\right) \\[6pt]
\;&\leq & \inf\limits_{\alpha\in\cR^r} \Delta t_n \ell\left(t^n_{k(n)}, \alpha, x^n, m^n(t^n_{k(n)})\right)+\phi\left(t^n_{k(n)+1}, x^n+\Delta t_n[A(t^n_{k(n)}, x^n)+B(t^n_{k(n)}, x^n)\alpha]\right)\\[8pt]
\; & \; & -\phi\left(t^n_{k(n)}, x^n\right).
\ea
$$
Since $\nabla \phi$ is bounded, {\bf(H1)}(i) implies that the last infimum above is reached at some  $\bar\alpha^n\in\cR^r$  and there exists $C_{\phi}>0$, independent of $n$, such that $|\bar\alpha^n|\leq C_\phi$. Therefore, for any $\alpha\in\cR^r$ we have
$$
\ba{lll}
0&\leq& \ell\left(t^n_{k(n)}, \bar\alpha^n, x^n, m^n(t^n_{k(n)})\right)+\frac{\phi\left(t^n_{k(n)+1}, x^n+\Delta t_n[A(t^n_{k(n)}, x^n)+B(t^n_{k(n)}, x^n)\bar\alpha^n]\right)-\phi\left(t^n_{k(n)}, x^n\right)}{\Delta t_n}\\[6pt]
\;&\leq& \ell\left(t^n_{k(n)}, \alpha, x^n, m^n(t^n_{k(n)})\right)+\frac{\phi\left(t^n_{k(n)+1}, x^n+\Delta t_n[A(t^n_{k(n)}, x^n)+B(t^n_{k(n)}, x^n)\alpha]\right)-\phi\left(t^n_{k(n)}, x^n\right)}{\Delta t_n}.
\ea
$$
Since the sequence $(\bar\alpha^n)_{n\in \NN}$ is bounded    by $C_\phi$, there exists a subsequence, still denoted by $(\bar\alpha^n)_{n\in\NN}$, and $\alpha^*\in \ov{\mathrm{B}}(0, C_\phi)$ such that $\bar\alpha^n\to \alpha^*$ as $n\to\infty$. Passing to the limit in the previous inequality we obtain
\be  \label{eq:subsol}
\ba{lll}
 0&\leq &\ell\left(t^*, \alpha^*, x^*, m(t^*)\right)+\partial_t\phi(t^*, x^*)+ \langle \nabla_{x}\phi(t^*, x^*),  A(t^*, x^*)+B(t^*, x^*)\alpha^*\rangle\\[6pt]
\;&\leq&  \ell\left(t^*, \alpha, x^*, m(t^*)\right)+\partial_t\phi(t^*, x^*)+\langle\nabla_{x}\phi(t^*, x^*), A(t^*, x^*)+B(t^*, x^*)\alpha\rangle  \quad \text{for all } \alpha\in\cR^r,
 \ea
\ee
from which we deduce that 
$$
H\left(t^*, x^*, \nabla_{x}\phi(t^*, x^*), m(t^*)\right)= - \ell\left(t^*, \alpha^*, x^*, m(t^*)\right)-\nabla_{x}\phi(t^*, x^*)\cdot \left[A(t^*, x^*)+B(t^*, x^*)\alpha^*\right].
$$
Finally, by \eqref{eq:subsol}, we obtain
$$
-\partial_t\phi(t^*, x^*)+H\left(t^*, x^*, \nabla_{x}\phi(t^*, x^*), m(t^*)\right)\leq 0,
$$
which proves assertion {\bf (b)}. 
\end{proof}
\section{Appendix II}
\begin{proof}[Proof of Theorem~\ref{unicidad_xi_star}] As in Section~\ref{fully_discrete_hjb}  we assume that $B$ and $A$ are decomposed as in \eqref{B_structure} and \eqref{A_structure}, respectively, with $B_1(t,x)\in \RR^{r\times r}$ being invertible for all $(t,x)\in[0,T]\times\RR^d$. Let $\xi^{1}$ and $\xi^{2}$ be two solutions to Problem~\ref{mfg_problem} and, for $i=1,2$, define $m^i\in C([0,T];\P_1(\RR^d))$ as $m^{i}(t)=e_{t}\sharp\xi^{i}$ for all $t\in[0,T]$. Given $x\in\RR^d$ and $i=1,2$, let us set 
$$
\text{Opt}^{i}(x)=\left\{\gamma \in W^{1,p}([0,T];\RR^d) \, \big| \, \exists \, \alpha\in L^{p}([0,T];\RR^r), \; (\gamma,\alpha) \; \text{solves  $({\color{blue}OC_{x,m^i}})$} \right\}.
$$
Observe that, by Remark~\ref{despues_de_hipotesis}{\rm(ii)}, if $\gamma\in \text{Opt}^{i}(x)$, then there exists a unique control  $\alpha[\gamma]\in L^{p}([0,T];\RR^r)$, given by~\eqref{eq:main-alpha},  such that $(\gamma,\alpha[\gamma])$ solves  $({\color{blue}OC_{x,m^i}})$. Let $\RR^d\ni x\mapsto \gamma^{i,x} \in  \Gamma$ be a Borel measurable selection of the set-valued map $\RR^d\ni x \rightrightarrows \text{Opt}^{i}(x)\in 2^{\Gamma}$. The existence of such a selection follows from exactly the same arguments than those in the proof of ~\cite[Lemma~2.1]{MR4304905}. 
Since $\xi^i$ solves Problem~\ref{mfg_problem}, we have   $\supp(\xi^i)\subseteq \cup_{x\in \supp(m_0)}\text{Opt}^{i}(x)$ and, hence,  condition~\eqref{unicidad_casi_segura} implies that $\xi^i=\gamma^i\sharp m_0$. 

Let us define $J^i:W^{1,p}([0,T];\RR^d)\to \RR$ as
$$
J^i(\gamma)=\int_{0}^{T}\ell(t,\alpha[\gamma](t), \gamma(t), m^i(t))\dd t + g(\gamma(T),m^i(T)) \quad \text{for all } \gamma\in  W^{1,p}([0,T];\RR^d)
$$
an notice that, by {\bf(H5)}, we have 
\be 
\label{monotonicity_difference_J}
\int_{\Gamma} \left(J^1(\gamma)-J^2(\gamma) \right) \dd(\xi^1-\xi^2)(\gamma) \geq 0. 
\ee
If  $\xi^1\neq\xi^2$, then~\eqref{unicidad_casi_segura} implies that 
$$
\int_{\cR^d}J^1(\gamma^{1,x})\dd m_0(x)<\int_{\cR^d}J^1(\gamma^{2,x})\dd m_0(x).
$$
Using that $\xi^i=\gamma^i\sharp m_0$, the previous condition can be rewritten as
\be 
\label{ineq_J_1}
\int_{\Gamma}J^1(\gamma)\dd \xi^1(\gamma)<\int_{\Gamma}J^1(\gamma)\dd \xi^2(\gamma).
\ee
Analogously, we obtain
\be 
\label{ineq_J_2}
\int_{\Gamma}J^2(\gamma)\dd \xi^2(\gamma)<\int_{\Gamma}J^2(\gamma)\dd \xi^1(\gamma).
\ee
Combining \eqref{ineq_J_1} and \eqref{ineq_J_2}, we obtain a contradiction with~\eqref{monotonicity_difference_J}. 
\end{proof}
\bibliographystyle{plain}
\bibliography{hjb,mfg}
\end{document}